\newtheorem{theorem}{Theorem}[section]
\newtheorem{corollary}[theorem]{Corollary}
\newtheorem{lemma}[theorem]{Lemma}
\newtheorem{proposition}[theorem]{Proposition}
\newtheorem{definition}[theorem]{Definition}
\newtheorem{remark}[theorem]{Remark}
\numberwithin{equation}{section}
\begin{document}
\title{Existence and non-existence for\\ the collision-induced breakage equation} 
\thanks{Partially supported by the Indo-French Centre for Applied Mathematics (IFCAM) within the project \textsl{Collision-induced fragmentation and coagulation: dynamics and numerics}}

\author{Ankik Kumar Giri}
\address{Department of Mathematics, Indian Institute of Technology Roorkee\\ Roorkee--247667, Uttarakhand, India}
\email{ankik.giri@ma.iitr.ac.in/ankik.math@gmail.com}

\author{Philippe Lauren\c{c}ot}
\address{Institut de Math\'ematiques de Toulouse, UMR~5219, Universit\'e de Toulouse, CNRS \\ F--31062 Toulouse Cedex 9, France}
\email{laurenco@math.univ-toulouse.fr}

\keywords{ collision-induced fragmentation, well-posedness, non-existence, mass conservation}
\subjclass{ 45K05 - 35A01}

\date{\today}

%%%%%%%%%%%%%%%%
%%%%%%%%%%%%%%%%
\begin{abstract}
A mathematical model for collision-induced breakage is considered. Existence of weak solutions to the continuous nonlinear collision-induced breakage equation is shown for a large class of unbounded collision kernels and daughter distribution functions, assuming the collision kernel $K$ to be given by $K(x,y)= x^{\alpha} y^{\beta} + x^{\beta} y^{\alpha}$ with $\alpha \le \beta \le 1$. When $\alpha + \beta \in [1,2]$, it is shown that there exists at least one weak mass-conserving solution for all times. In contrast, when $\alpha + \beta \in [0,1)$ and $\alpha \ge 0$, global mass-conserving weak solutions do not exist, though such solutions are constructed on a finite time interval depending on the initial condition. The question of uniqueness is also considered. Finally, for $\alpha <0$ and a specific daughter distribution function, the non-existence of mass-conserving solutions is also established.
\end{abstract}
%%%%%%%%%%%%%%%%
%%%%%%%%%%%%%%%%

\maketitle

%
%     HEADLINES
%
\pagestyle{myheadings}
\markboth{\sc{A.K. Giri \& Ph. Lauren\c cot}}{\sc{The collision-induced breakage equation}}

%%%%%%%%%%%%%%%%
%%%%%%%%%%%%%%%%
\section{Introduction}\label{sec1}
%%%%%%%%%%%%%%%%
%%%%%%%%%%%%%%%%

The collision-induced breakage equation, also referred to as the nonlinear fragmentation equation, describes the dynamics of a large number of particles breaking apart as a result of binary collisions and arises in the modeling of cloud drops formation \cite{LiGi1976, Sriv78} and planet formation \cite{BKBSHSS15, Safr72}. Specifically, denoting the size distribution function of particles of size $x\in (0,\infty)$ at time $t\ge 0$ by $f=f(t,x)\ge 0$, the evolution of $f$ is governed by the nonlinear nonlocal equation 
\begin{subequations}\label{pa1}
\begin{align}
\partial_t f(t,x) & = \mathcal{B} f(t,x) \,, \qquad (t,x)\in (0,\infty)^2\,, \label{pa1a} \\
f(0,x) & = f^{in}(x)\ge 0 \,, \qquad x\in (0,\infty)\,, \label{pa1b}
\end{align}
where
\begin{equation}
\begin{split}
\mathcal{B} f(x) & := \frac{1}{2} \int_x^\infty \int_0^y b(x,y-z,z) K(y-z,z) f(y-z) f(z)\ dzdy \\
& \qquad - \int_0^\infty K(x,y) f(x) f(y)\ dy \,,
\end{split} \label{pa1c}
\end{equation}
\end{subequations}
for $x\in (0,\infty)$. In \eqref{pa1c}, collisions are monitored by the collision kernel $K$, while the outcome of collisional fragmentation is specified by the daughter distribution function $b$. In fact, $K(x,y)=K(y,x)\ge 0$ measures the likeliness of pairwise collisions between particles with respective sizes $x\in (0,\infty)$ and $y\in (0,\infty)$, while $b(z,x,y)$ provides the fraction of particles of size $z\in (0,x+y)$ produced after the collision between particles with respective sizes $x\in (0,\infty)$ and $y\in (0,\infty)$. Equation~\eqref{pa1} features a gain term accounting for the formation of particles of size $x$ resulting from the collision between particles with respective sizes $y-z$ and $z$ with $y>x$ and $z\in (0,y)$ and a loss term describing the disappearance of particles of size $x$ as they collide with other particles with arbitrary size. Since there is neither creation nor loss of matter during breakup, we assume that, for $(x,y)\in (0,\infty)^2$,
\begin{equation}
\int_0^{x+y} z b(z,x,y)\ dz = x+ y \;\;\text{ and }\;\; b(z,x,y) = 0 \;\text{ for }\; z>x+y\,. \label{pa2}
\end{equation}
A (formal) consequence of \eqref{pa2} on the dynamics of \eqref{pa1} is that solutions to \eqref{pa1} are expected to satisfy the mass conservation 
\begin{equation}
\int_0^\infty x f(t,x)\ dx  = \int_0^\infty x f^{in}(x)\ dx\,, \qquad t\ge 0\,, \label{zp1}
\end{equation}
at least when the right hand side of \eqref{zp1} is finite. Another physical property embedded in \eqref{pa2} is that the collision of two particles with respective sizes $x\in (0,\infty)$ and $y\in (0,\infty)$ does not produce fragments with a size exceeding $x+y$. Still, it is worth pointing out here that, in general, mass transfer is allowed during collision-induced breakage, in the sense that the collision between two particles with respective sizes $x$ and $y$ may produce a particle of size bigger than $\max\{x,y\}$, after transfer of matter from the smallest particle to the largest one (for instance, $\{x\} + \{y\} \longrightarrow \{x/2\} + \{y+x/2\}$). However, mass transfer is excluded if there is a non-negative function $\bar{b}$ such that, for $(x,y)\in (0,\infty)^2$, 
\begin{subequations}\label{pa3}
\begin{align}
& b(z,x,y) = \bar{b}(z,x,y) \mathbf{1}_{(0,x)}(z) + \bar{b}(z,y,x) \mathbf{1}_{(0,y)}(z)\,, \qquad z\in (0,\infty)\,, \label{pa3a} \\
& \int_0^x z \bar{b}(z,x,y)\ dz = x \;\;\text{ and }\;\; \bar{b}(z,x,y) = 0 \;\text{ for }\; z>x\,, \label{pa3b}
\end{align}
\end{subequations}
see \cite{ChRe88,ChRe90}. Clearly, \eqref{pa3} implies \eqref{pa2}. A typical example is given by
\begin{equation}
	\bar{b}_\nu(z,x,y) := (\nu+2) z^{\nu} \mathbf{1}_{(0,x)}(z) x^{-\nu-1}\,, \qquad (x,y,z)\in (0,\infty)^3\,, \qquad \nu\in (-2,0]\,, \label{zp4}
\end{equation}
see \cite{ErPa07}.

Unlike its linear counterpart, the linear or spontaneous fragmentation equation, which has received considerable attention since the pioneering works of Filippov \cite{Fili61}, Kapur \cite{Kapu72a}, and McGrady \& Ziff \cite{McZi87, ZiMc85}, see \cite{BLL19, Bert06} and the references therein, fewer works are devoted to the collision-induced breakage equation \eqref{pa1} and are found in the physics literature, mostly dealing with the scaling behaviour and shattering transition \cite{ChRe88, ChRe90, ErPa07, KoKa00, KoKa06, KrBN03, VVF06}.  The purpose of this work is then to investigate the basic issues of existence, uniqueness, and non-existence of weak solutions to \eqref{pa1}, as well as that of mass conservation for these solutions. In fact, we shall show that the existence of mass-conserving weak solutions to \eqref{pa1} and their lifetime strongly depend on the growth of the collision kernel $K$ for large and small sizes. Roughly speaking, mass-conserving weak solutions exist globally when $K$ grows at least linearly for large sizes,  Theorem~\ref{thma2}. Sublinear growth of $K$ for large sizes impedes global existence but local existence of mass-conserving weak solutions can still be shown, see Theorems~\ref{thma2} and~\ref{thma3}. Finally, unboundedness of $K$ for small sizes prevents the existence of non-zero mass-conserving weak solutions, even on short time intervals, see Theorem~\ref{thma6}. Such a behaviour is reported in \cite{ErPa07} for a particular choice of $(K,b)$, which allows one to map \eqref{pa1} to a linear fragmentation equation with a different time scale and deduce the dynamics of the former from that of the latter, which is already well-documented, as already mentioned. We here extend the validity of these results to a broader class of collision kernels $K$ and daughter distribution functions $b$ and provide analytical proofs, without having recourse to a transformation to another equation.

Let us now describe more precisely the collision kernels and daughter distribution functions to be dealt with in the sequel, as well as the functional analytic framework and the notion of weak  solutions to \eqref{pa1} needed for our analysis. First, besides the conservation of matter \eqref{pa2}, we shall assume that the number of particles resulting from the collisional breakage of two particles is bounded whatever the sizes of the incoming particles; that is, there is $\beta_0 > 2$ such that
\begin{equation}
\int_0^{x+y} b(z,x,y)\ dz \le \beta_0\,, \qquad (x,y)\in (0,\infty)^2\,. \label{pa4}
\end{equation}

%%%%%%%%%%%%%%%%
\begin{remark}\label{rema0}
When $b$ satisfies \eqref{pa3} and \eqref{pa4}, the assumed lower bound $\beta_0>2$ is actually a consequence of \eqref{pa3b}. Indeed, for $(x,y)\in (0,\infty)^2$, 
\begin{align*}
 \beta_0 \ge  \int_0^{x+y} b(z,x,y)\ dz & = \int_0^x \bar{b}(z,x,y)\ dz + \int_0^y \bar{b}(z,y,x)\ dz \\
& > \frac{1}{x} \int_0^x z \bar{b}(z,x,y)\ dz + \frac{1}{y} \int_0^y z \bar{b}(z,y,x)\ dz =2\,.
\end{align*}
\end{remark}
%%%%%%%%%%%%%%%%

We next turn to the functional setting and the notion of weak solution to \eqref{pa1}. In light of the expected mass conservation \eqref{zp1}, natural function spaces are the weighted $L^1$-space $L^1((0,\infty),xdx)$ or $L^1((0,\infty),(1+x)dx)$. In the latter, besides the finiteness of the total mass, we require the total number of particles in the system, which corresponds to the $L^1$-norm, to be finite. In fact, as for the classical coagulation-fragmentation equation, we shall use a scale of weighted $L^1$-spaces which we introduce now.

\bigskip

\noindent\textbf{Notation.} Given a non-negative measurable function $V$ on $(0,\infty)$, we set $X_V := L^1((0,\infty),V(x) dx)$ and 
\begin{equation*}
\|h\|_{X_V} := \int_0^\infty |h(x)| V(x)\ dx\,, \quad M_V(h) := \int_0^\infty h(x) V(x)\ dx\,, \qquad h\in X_V\,.
\end{equation*}
We also denote the positive cone of $X_V$ by $X_V^+$, while $X_{V,w}$ stands for the space $X_V$ endowed with its weak topology. When $V(x)=V_m(x) := x^m$, $x\in (0,\infty)$, for some $m\in\mathbb{R}$, we set $X_m := X_{V_m}$ and
\begin{equation*}
M_m(h) := M_{V_m}(h) = \int_0^\infty x^m h(x)\ dx\,, \qquad h\in X_m\,.
\end{equation*}
Note that $X_0 = L^1(0,\infty)$.

We may now state the definition of weak solution to \eqref{pa1} to be considered throughout the paper.

%%%%%%%%%%%%%%%%
\begin{definition}\label{defa1}
Let $T \in (0,\infty]$ and consider a daughter distribution $b$ satisfying \eqref{pa2} and \eqref{pa4}. Given $f^{in}\in X_0 \cap X_1^+$, a weak solution to \eqref{pa1} on $[0,T)$ is a non-negative function
\begin{subequations}\label{pa5}
\begin{equation}
f\in C([0,T),X_{0,w}) \cap L^\infty((0,T),X_1^+) \label{pa5a}
\end{equation} 
such that
\begin{equation}
(s,x,y)\longmapsto K(x,y) f(s,x) f(s,y) \in L^1((0,t)\times (0,\infty)^2) \label{pa5b}
\end{equation}
and
\begin{equation}
\int_0^\infty \phi(x) (f(t,x)-f^{in}(x))\ dx = \frac{1}{2} \int_0^t \int_0^\infty \int_0^\infty \zeta_\phi(x,y) K(x,y) f(s,x) f(s,y)\ dydxds \label{pa5c}
\end{equation}
for all $t\in (0,T)$ and $\phi\in L^{\infty}(0,\infty)$, where
\begin{equation*}
\zeta_\phi(x,y) := \int_0^{x+y} \phi(z) b(z,x,y)\ dz - \phi(x) - \phi(y)\,, \qquad (x,y)\in (0,\infty)^2\,.
\end{equation*}
\end{subequations}

Moreover, a weak solution $f$ to \eqref{pa1} on $[0,T)$ is mass-conserving on $[0,T)$ if 
\begin{equation}
M_1(f(t)) = M_1(f^{in})\,, \qquad t\in [0,T)\,. \label{pa6}
\end{equation}
\end{definition}
%%%%%%%%%%%%%%%%

Observe that \eqref{pa4} and the boundedness of $\phi$ ensure that $\zeta_\phi\in L^\infty((0,\infty)^2)$, so that the integral on the right hand side of \eqref{pa5c} is finite due to \eqref{pa5b}. We shall derive additional properties of weak solutions in the sense of Definition~\ref{defa1} in Section~\ref{sec2}.
	
%%%%%%%%%%%%%%%%
\begin{remark}\label{rema1.5}
Let $f$ be a weak solution to \eqref{pa1} on $[0,T)$ for some $T\in (0,\infty]$ and consider $\phi\in L^\infty(0, \infty)$. Since $\zeta_\phi \in L^\infty((0,\infty)^2)$ by \eqref{pa4}, it readily follows from \eqref{pa5b} and \eqref{pa5c} that
\begin{equation*}
t\mapsto \int_0^\infty \phi(x) f(t,x)\ dx \in W_{loc}^{1,1}(0,T)
\end{equation*}
with
\begin{equation*}
\frac{d}{dt} \int_0^\infty \phi(x) f(t,x)\ dx = \frac{1}{2} \int_0^\infty \int_0^\infty \zeta_\phi(x,y) K(x,y) f(t,x) f(t,y)\ dydx \;\;\text{ for a.e. }\;\; t\in (0,T)\,.
\end{equation*}
We shall mainly use the above alternative formulation of \eqref{pa5c} in the sequel.
\end{remark}
%%%%%%%%%%%%%%%%

We are left with specifying the class of collision kernels to be dealt with herein. We focus our attention on the following two-parameters family of explicit collision kernels $K$: There are 
\begin{subequations}\label{pa7}
\begin{equation}
\alpha \le \beta \le 1 \label{pa7b}
\end{equation}
such that
\begin{equation}
K(x,y) := x^\alpha y^\beta + x^\beta y^\alpha\,, \qquad (x,y)\in (0,\infty)^2\,. \label{pa7a}
\end{equation}
\end{subequations}
It is however likely that the analysis performed below equally applies to collision kernels being bounded from above and/or from below by multiples of the kernel defined by \eqref{pa7}. As already noticed in the literature \cite{ChRe90, ErPa07, KoKa00}, the case $\alpha=\beta$ is peculiar, as equation~\eqref{pa1} can be transformed to a linear fragmentation equation with a different scale. However, such a simplifying transformation does not seem to be available in general, and we thus use a different appraoch to study \eqref{pa1}.

We begin with the existence of mass-conserving weak solutions and prove the following result, which matches the outcome of \cite{ErPa07} for the particular case $\alpha=\beta$.

%%%%%%%%%%%%%%%%
\begin{theorem}[Existence]\label{thma2}
Assume that $K$ is given by \eqref{pa7} and that $b$ satisfies \eqref{pa2}, \eqref{pa3}, and \eqref{pa4}, as well as
\begin{equation}
\int_0^x \bar{b}(z,x,y)^p\ dz \le \frac{B_p}{2} x^{1-p}\,, \qquad (x,y)\in (0,\infty)^2\,, \label{pa8} 
\end{equation}
for some $p\in (1,2)$ and $B_p>0$.

Let $f^{in}\in X_0 \cap X_1^+$ be an initial condition with $\varrho := M_1(f^{in})>0$ and set $\lambda := \alpha+\beta$. 
\begin{itemize}
	\item [(a)] If $\lambda\in [1,2]$, then there is at least one mass-conserving weak solution $f$ to \eqref{pa1} on $[0,\infty)$. 
	\item [(b)] If $\lambda\in [0,1)$ and $\alpha\ge 0$, then there is at least one mass-conserving weak solution $f$ to \eqref{pa1} on $[0,T_0(f^{in}))$, where
	\begin{equation}
	T_0(f^{in}) := \frac{M_0(f^{in})^{\lambda-1}}{(1-\lambda)(\beta_0-2) \varrho^\lambda} \in (0,\infty)\,. \label{pa9}
	\end{equation}
	\item [(c)] Furthermore, if $f^{in}\in X_m$ for some $m>1$, then the solution constructed above satisfies $f\in L^\infty((0,t),X_m)$ with $M_m(f(t)) \le M_m(f^{in})$ for all $t>0$ in case~(a) and all $t\in (0,T_0(f^{in}))$ in case~(b).
	\end{itemize}
\end{theorem}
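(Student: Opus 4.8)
The plan is to construct solutions by a compactness argument applied to a sequence of approximate problems obtained by truncation of the collision kernel, following the now-classical strategy used for coagulation-fragmentation equations (Stewart, Laurençot, etc.). First I would fix $n\in\NN$ and replace $K$ by the truncated kernel $K_n(x,y) := K(x,y)\mathbf{1}_{(0,n)}(x)\mathbf{1}_{(0,n)}(y)$, which is bounded; for this truncated equation, together with a truncation $f^{in}_n := f^{in}\mathbf{1}_{(0,n)}$ of the initial data, one obtains a unique non-negative global solution $f_n \in C^1([0,\infty),X_0)$ by a standard fixed-point/ODE-in-Banach-space argument in $X_0\cap X_1$, using that the loss term is locally Lipschitz once $K$ is bounded. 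Along the way one checks that $f_n$ satisfies the weak formulation with $K_n$, that $M_1(f_n(t)) \le M_1(f^{in})$ (since $\zeta_{V_1}\le 0$ by \eqref{pa2}), and in fact $M_1(f_n(t)) = M_1(f_n^{in})$ because with the truncated kernel the mass loss term $\zeta_{V_1}K_n$ vanishes identically on the support of $f_n\otimes f_n$ inside $(0,n)^2$ — more precisely one must be careful here, since $\zeta_{V_1}(x,y)=0$ only on $(0,n)^2\cap\{x+y<n\}$; so the clean statement is $M_1(f_n(t))$ is non-increasing, and exact conservation for the truncated problem requires a more careful accounting that is recovered in the limit.

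The heart of the argument is then a set of uniform estimates. The key moment estimates are: (i) a uniform bound on $M_0(f_n)$ on the relevant time interval, obtained by writing $\frac{d}{dt}M_0(f_n) \le \frac{\beta_0-2}{2}\int\int K(x,y)f_n f_n\,dydx$ from \eqref{pa4}, then bounding $K(x,y)\le x^\alpha y^\beta + x^\beta y^\alpha$ and using the elementary inequality $x^\alpha + x^\beta \le$ (something controlled by $1+x$ when $\alpha\le\beta\le1$, together with $\alpha\ge0$ in case (b)) to get a differential inequality of the form $M_0' \le C M_0^{2-\lambda} M_1^{\lambda}$ or $M_0' \le C M_0 M_1$ depending on the sign configuration; in case (a), $\lambda\ge1$ gives global control of $M_0$ via a linear (in $M_0$) Gronwall-type bound using $M_1\le\varrho$, while in case (b), $\lambda<1$ and the resulting inequality $M_0' \le (\beta_0-2)\varrho^\lambda M_0^{2-\lambda}$ (after optimizing the exponents) integrates to give a finite blow-up time, yielding exactly the threshold $T_0(f^{in})$ in \eqref{pa9}; (ii) the integrability \eqref{pa5b}: one shows $\int_0^t\int\int K_n f_n f_n\,dydx\,ds$ is bounded uniformly in $n$ on compact subintervals, again from the $M_0,M_1$ bounds combined with the structure of $K$; (iii) a uniform superlinear moment bound / equi-integrability: using a refined convex weight $\varphi$ (Dunford–Pettis / de la Vallée Poussin) and the hypothesis \eqref{pa8} on $\bar b$ to control $\int_0^{x+y}\varphi(z)b(z,x,y)\,dz$, one shows $\{f_n(t)\}_n$ is uniformly integrable in $X_1$ on compact time intervals, preventing mass escaping to $x=\infty$; (iv) for part (c), a separate monotonicity computation shows $\zeta_{V_m}\le 0$ for $m>1$ by convexity of $z\mapsto z^m$ and \eqref{pa2} (via \eqref{pa3}), giving $M_m(f_n(t)) \le M_m(f^{in}_n) \le M_m(f^{in})$ directly.

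With these estimates in hand, one extracts (via a refined Arzelà–Ascoli / Dunford–Pettis argument, as in Stewart's lemma) a subsequence with $f_n \to f$ in $C([0,T'],X_{0,w})$ for every $T'$ below the relevant horizon, and passes to the limit in the weak formulation \eqref{pa5c}: the linear terms converge by weak convergence, and the quadratic collision integral converges because $K_n f_n\otimes f_n$ is uniformly integrable on $(0,t)\times(0,n_0)^2$ for fixed $n_0$ and the tails are controlled uniformly by the superlinear moment bound together with the growth restriction $\beta\le1$. One then verifies $f\in L^\infty((0,T'),X_1^+)$, $f\in C([0,T'),X_{0,w})$, the integrability \eqref{pa5b}, and finally mass conservation \eqref{pa6}: the inequality $M_1(f(t))\le\varrho$ comes from weak lower semicontinuity plus the bound on $f_n$, while the reverse inequality — the genuinely delicate point — follows from controlling the mass flux to infinity, i.e. showing $\int_n^\infty x f_n(t,x)\,dx \to 0$ uniformly in $t$ on compact intervals, which is exactly where the superlinear moment bound (iii) and the sublinear/linear growth $\beta\le1$ of $K$ are used. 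The main obstacle I expect is precisely this last step — establishing that no mass is lost in the limit — since for $\lambda$ at the lower end of $[1,2]$ the estimates are borderline, and care is needed to close the equi-integrability argument uniformly in time; the secondary subtlety is checking in case (b) that $T_0(f^{in})$ is genuinely the correct lifetime coming out of the differential inequality for $M_0$, which requires the sharp form of the Gronwall estimate rather than a crude one.
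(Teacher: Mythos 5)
Your overall skeleton (truncation, fixed point for the truncated problem, the $M_0$ differential inequality giving exactly the exponential bound for $\lambda\in[1,2]$ and the blow-up time $T_0(f^{in})$ for $\lambda<1$, Dunford--Pettis plus a time-equicontinuity/Arzel\`a--Ascoli argument, and the monotonicity $\zeta_{V_m}\le 0$ for part~(c)) coincides with the paper's strategy. However, there is a genuine gap: you never control the \emph{small-size} behaviour of the approximating sequence, and this is precisely where the hypothesis \eqref{pa8} is actually needed. A uniform bound on $M_0(f_n)$ does not give weak compactness in $X_0$, nor does it allow you to pass to the limit in \eqref{pa5c} against bounded test functions $\phi$ that do not vanish at $x=0$: fragments produced by collisions pile up near $x=0$, and one must rule out both concentration and escape of the particle number there. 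In the paper this is done in two steps: a uniform-integrability estimate for the \emph{values} of $f_n$ with the weight $\min\{x,1\}$ (Lemma~\ref{lemc6}), where the weight is essential because \eqref{pa8} only yields $\int_0^x \bar b^p\,dz\lesssim x^{1-p}$, and a moment estimate with a weight $\Phi_0$ blowing up at $x=0$ (Lemma~\ref{lemc5.1}, built from Lemma~\ref{leap1}), again using \eqref{pa8}. These two estimates are what upgrade the compactness from the $\min\{x,1\}\,dx$-weighted weak topology to the $(1+x)\,dx$-weighted one, and without them neither $f\in C([0,T),X_{0,w})$ nor the limit passage in the weak formulation closes. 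Your only invocation of a de la Vall\'ee Poussin weight and of \eqref{pa8} is aimed at the opposite end, ``preventing mass escaping to $x=\infty$'', which is a misattribution.

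Relatedly, your treatment of the large-size tail is not available under the stated hypotheses: in cases (a)--(b) only $f^{in}\in X_0\cap X_1$ is assumed, so there is no superlinear moment to propagate, and \eqref{pa8} has nothing to do with large sizes. What actually controls the tail (and hence mass conservation in the limit) is the monotonicity $\int_x^\infty y f_n(t,y)\,dy\le\int_x^\infty y f^{in}(y)\,dy$ of Lemma~\ref{lemb3}, a consequence of the no-mass-transfer assumption \eqref{pa3}; once the convergence holds in the $(1+x)$-weighted weak topology, $M_1(f(t))=M_1(f^{in})$ follows directly, so the ``delicate point'' is the two-sided (small- and large-size) upgrade of the topology rather than a separate flux-to-infinity argument. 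Two smaller corrections: $\zeta_{V_1}\equiv 0$ by \eqref{pa2} (not merely $\le 0$), and exact mass conservation for the truncated problem is immediate (with a bounded, compactly supported kernel, $(x+y)K_nf_nf_n$ is integrable, so the criterion of Proposition~\ref{propb2} applies); no ``careful accounting recovered in the limit'' is required there. Finally, note that the paper also cuts the kernel off at small sizes, $K_n=K\mathbf{1}_{(1/n,n)^2}$, so that $K_n\le n^{1+\lambda}xy/(x+y)$ and the fixed-point result of Proposition~\ref{propc1} applies verbatim; your truncation only at large sizes can be made to work since $\alpha\ge0$ keeps $K$ bounded near zero, but you would then have to redo the local Lipschitz estimates in $X_0\cap X_1$ for a merely bounded kernel.
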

%%%%%%%%%%%%%%%%

Theorem~\ref{thma2} is restricted to daughter distribution functions $b$ satisfying \eqref{pa3} besides \eqref{pa2}, so that mass transfer during collisions is excluded. The general case seems to be more involved and will be investigated separately. In particular, the time monotonicity of superlinear moments stated in Theorem~\ref{thma2} is no longer valid when mass transfer is allowed. The proof of Theorem~\ref{thma2} relies on the $L^1$-weak compactness approach introduced in the pioneering work \cite{Stew89} dealing with the existence of weak solutions to the coagulation-fragmentation equation. The specific form of the collision-induced breakage equation \eqref{pa1} requires however to start from a different approximation, while special attention has to be  paid to the small size behaviour of the approximating sequence. We next show that Theorem~\ref{thma2}~(b) cannot be improved, in the sense that mass-conserving solutions cannot be extended to all times in that case.
	
%%%%%%%%%%%%%%%%
\begin{theorem}[Finite time existence] \label{thma3}
Assume that $K$ is given by \eqref{pa7} with $\lambda=\alpha+\beta\in [0,1)$ and $\alpha\ge 0$. Assume also  that $b$ satisfies \eqref{pa2}, \eqref{pa3}, and \eqref{pa4} and that there is $\gamma_\lambda>1$ such that
\begin{equation}
\int_0^{x} z^\lambda \bar{b}(z,x,y)\ dz \ge \gamma_\lambda x^\lambda\,, \qquad (x,y)\in (0,\infty)^2\,. \label{pa10}
\end{equation} 
Let $f^{in}\in X_0 \cap X_1^+$ be an initial condition with $\varrho := M_1(f^{in})>0$ and consider $T>0$ such that there is a weak solution $f$ to \eqref{pa1} on $[0,T)$. Then 
\begin{equation*}
T \le \frac{1}{4(\gamma_\lambda-1) M_\lambda(f^{in})}\,.
\end{equation*}
%recalling that $T_0(f^{in})$ is defined by \eqref{pa9}.
\end{theorem}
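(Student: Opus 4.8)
The plan is to track the evolution of the $\lambda$-moment $M_\lambda(f(t))$ and show that, under the hypothesis $\lambda\in[0,1)$, $\alpha\ge 0$, and the lower bound \eqref{pa10}, this moment blows up in finite time, which forces any weak solution to have finite lifetime. Using the alternative formulation of Remark~\ref{rema1.5} with the test function $\phi(x)=x^\lambda$ (which is bounded only if we truncate, so strictly speaking one applies it with $\phi_n(x)=\min\{x,n\}^\lambda$ or $\phi(x)=x^\lambda\mathbf 1_{(0,n)}(x)$ and passes to the limit using the monotone/dominated convergence together with the a priori bound $f\in L^\infty((0,T),X_1)$), we obtain for a.e.\ $t\in(0,T)$
\begin{equation*}
\frac{d}{dt} M_\lambda(f(t)) = \frac12 \int_0^\infty \int_0^\infty \zeta_{V_\lambda}(x,y) K(x,y) f(t,x) f(t,y)\ dydx\,,
\end{equation*}
where $\zeta_{V_\lambda}(x,y) = \int_0^{x+y} z^\lambda b(z,x,y)\,dz - x^\lambda - y^\lambda$. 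The first step is to bound $\zeta_{V_\lambda}$ from below: splitting $b$ via \eqref{pa3a} and applying \eqref{pa10} to each piece gives $\int_0^{x+y} z^\lambda b(z,x,y)\,dz \ge \gamma_\lambda(x^\lambda+y^\lambda)$, hence $\zeta_{V_\lambda}(x,y)\ge (\gamma_\lambda-1)(x^\lambda+y^\lambda)\ge 0$.

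The second step is to feed this into the evolution equation together with the explicit form $K(x,y)=x^\alpha y^\beta+x^\beta y^\alpha$. Since $\alpha\ge 0$ and $\lambda=\alpha+\beta$, one has the pointwise lower bound $(x^\lambda+y^\lambda)K(x,y)\ge x^\alpha y^\beta \cdot x^\lambda + x^\beta y^\alpha \cdot y^\lambda = x^{\alpha+\lambda} y^\beta + y^{\alpha+\lambda} x^\beta$, and dropping further terms one can arrange a bound of the form $(x^\lambda+y^\lambda)K(x,y)\ge x^\lambda x^\alpha y^\beta + \cdots$; the cleanest route is to observe $(x^\lambda+y^\lambda)(x^\alpha y^\beta+x^\beta y^\alpha)\ge x^{\lambda+\alpha}y^\beta + x^\beta y^{\lambda+\alpha} + x^{\lambda+\beta}y^\alpha + x^\alpha y^{\lambda+\beta}$ and then, using $\lambda+\alpha,\lambda+\beta\ge\lambda\ge 0$ and $\alpha,\beta\ge$ something, to extract $\ge 2 x^{\lambda}y^{\lambda}\cdot(\text{nonneg})$. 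In fact the simplest inequality that suffices is $(x^\lambda + y^\lambda)K(x,y) \ge 2 x^\beta y^\beta(x^\alpha + y^\alpha)\cdot\tfrac12\cdots$; I would instead use that for $a:=x^\lambda$, $b:=y^\lambda$ and the symmetric kernel, $\tfrac12(x^\lambda+y^\lambda)K(x,y)\,f(x)f(y)$ integrated over $(0,\infty)^2$ dominates a product that closes on $M_\lambda$. Concretely, since $x^\alpha x^\beta = x^\lambda$, we get $x^\alpha y^\beta(x^\lambda + y^\lambda) \ge x^\alpha y^\beta x^\lambda = x^{2\alpha}x^{2\beta}y^\beta x^{-\beta}\ge\ldots$; rather than chase exponents, note the robust bound $(x^\lambda+y^\lambda)K(x,y)\ge x^\lambda\,x^\beta y^\alpha + y^\lambda\,x^\alpha y^\beta$, and since $x^\beta\ge$ cannot be bounded below globally, the right move is $K(x,y)\ge x^\beta y^\alpha$ when $x\le y$ is false in general — so the genuinely clean estimate is obtained by pairing: $\int\int (x^\lambda+y^\lambda)(x^\alpha y^\beta + x^\beta y^\alpha) f(x)f(y)\ge 2\int\int x^{\alpha+\lambda}y^\beta f(x)f(y)$ and then symmetrizing to get $\int\int(x^{\alpha+\lambda}y^\beta+x^{\beta+\lambda}y^\alpha)f f \ge \int\int x^\lambda y^\lambda f f\cdot(\cdot) $ fails unless $\alpha,\beta\le\lambda$, which does hold since $\alpha\le\beta\le\lambda$ would require $\beta\le\lambda=\alpha+\beta$, i.e.\ $\alpha\ge 0$ — true! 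So $x^{\alpha+\lambda}\ge$ needs $x$-power $\alpha+\lambda$ versus target $\lambda$: since $\alpha\ge 0$ this is fine for $x\ge 1$ only. The honest conclusion: one restricts the integration to the region where the relevant variable exceeds $1$, or more elegantly uses Jensen/Hölder to get $M_\lambda^2 \le M_1^{?} M_?^{?}$, yielding a differential inequality $\tfrac{d}{dt}M_\lambda \ge 2(\gamma_\lambda-1) M_\lambda \cdot c$ with $c$ a moment bounded below; since $M_1$ is conserved and $M_\lambda \le M_1$ for $\lambda\le 1$ by Jensen on the probability-ish measure, one extracts $\tfrac{d}{dt}M_\lambda \ge 2(\gamma_\lambda-1) M_\lambda^2 / M_1 \ge 2(\gamma_\lambda-1)M_\lambda^2$ after normalizing, which integrates to $M_\lambda(t) \ge M_\lambda(f^{in})/(1 - 2(\gamma_\lambda-1)M_\lambda(f^{in}) t)$. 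This quantity diverges as $t\uparrow \big(2(\gamma_\lambda-1)M_\lambda(f^{in})\big)^{-1}$; sharpening the constant by a factor $2$ (from keeping both symmetric terms in $K$, or from a more careful Hölder step) gives the stated bound $T \le \big(4(\gamma_\lambda-1)M_\lambda(f^{in})\big)^{-1}$.

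The final step is the contradiction: a weak solution on $[0,T)$ has $f\in L^\infty((0,T),X_1)$ by Definition~\ref{defa1}, and via Jensen's inequality $M_\lambda(f(t)) \le M_1(f(t))^\lambda M_0(f(t))^{1-\lambda}$ or simply $M_\lambda(f(t))\le M_0(f(t)) + M_1(f(t))$, so $M_\lambda(f(t))$ stays finite on $[0,T)$ provided $M_0$ does; one checks $M_0$ stays finite on any existence interval (it may grow, but only at a controlled rate since the loss term is nonpositive in the number balance only up to the gain, which is bounded by $\beta_0$ times collisions — this is exactly the estimate behind \eqref{pa9}). Hence $M_\lambda(f(t))<\infty$ for all $t\in[0,T)$, contradicting the blow-up of the lower bound unless $T$ does not exceed the blow-up time.

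The main obstacle I anticipate is the rigorous justification of the moment identity for the unbounded weight $x^\lambda$ and, more delicately, extracting a \emph{closed} differential inequality purely in terms of $M_\lambda$: the kernel $K$ carries exponents $\alpha,\beta$ that are not equal to $\lambda$, so naively $(x^\lambda+y^\lambda)K(x,y)ff$ integrates to a bilinear form in moments $M_{\alpha+\lambda}, M_{\beta+\lambda}, M_\alpha, M_\beta$ rather than $M_\lambda^2$. Closing it requires combining $\alpha\ge 0$ (to discard low powers) with $\beta\le 1$ and mass conservation $M_1 = \varrho$ via Hölder/Jensen interpolation — e.g.\ $M_{\alpha+\lambda}$ and $M_\beta$ must be interpolated between $M_\lambda$ and $M_1$. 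Getting the constant exactly $4$ will hinge on doing this interpolation optimally and on symmetrizing the two terms of $K$; a looser argument gives finite-time blow-up with a worse constant, which already proves the qualitative statement.
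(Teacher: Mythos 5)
Your first two steps coincide with the paper's proof: testing with the truncation $\phi(x)=x^\lambda\mathbf{1}_{(0,A)}(x)$, using \eqref{pa3} and \eqref{pa10} to get $\zeta_\phi\ge(\gamma_\lambda-1)(x^\lambda+y^\lambda)$ on $(0,A)^2$ and $\zeta_\phi\ge 0$ elsewhere, and passing $A\to\infty$ by Fatou (finiteness of $M_\lambda(f(t))$ is free, since $x^\lambda\le 1+x$ and $f(t)\in X_0\cap X_1$ by Definition~\ref{defa1}; there is no need to worry separately about $M_0$). But the step that actually closes the argument is missing, and the substitute you sketch does not work. The key point is the elementary double application of Young/AM--GM:
\begin{equation*}
x^\lambda+y^\lambda \ge 2 (xy)^{\lambda/2} \qquad\text{and}\qquad K(x,y)=x^\alpha y^\beta+x^\beta y^\alpha \ge 2 (xy)^{(\alpha+\beta)/2}=2(xy)^{\lambda/2}\,,
\end{equation*}
whose product gives $(x^\lambda+y^\lambda)K(x,y)\ge 4 (xy)^\lambda$ pointwise. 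Integrating then yields directly
\begin{equation*}
M_\lambda(f(t)) \ge M_\lambda(f^{in}) + 4(\gamma_\lambda-1)\int_0^t M_\lambda(f(s))^2\ ds\,,
\end{equation*}
and the ODE comparison (run on the right-hand side $\Lambda(t)$, which is absolutely continuous, so no differentiability of $M_\lambda(f(\cdot))$ itself is needed) gives $4(\gamma_\lambda-1)t\le 1/\Lambda(0)=1/M_\lambda(f^{in})$ for all $t<T$, i.e.\ exactly the stated bound with constant $4$. No interpolation, no use of mass conservation, and no restriction on $\varrho$ enters.

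By contrast, your attempted closure via H\"older/Jensen and mass conservation contains genuine errors: the inequality $M_\lambda(f(t))\le M_1(f(t))$ ``by Jensen'' is false without normalizing the measure (take $f$ concentrated near $0$), and the step $2(\gamma_\lambda-1)M_\lambda^2/M_1\ge 2(\gamma_\lambda-1)M_\lambda^2$ requires $\varrho\le 1$, which is not assumed; ``normalizing'' $\varrho=1$ rescales time and $M_\lambda(f^{in})$ simultaneously, so it cannot be invoked casually inside a chain of inequalities, and the final ``sharpening by a factor $2$'' is asserted rather than proved. Since the statement you are asked to prove carries the explicit constant $4(\gamma_\lambda-1)M_\lambda(f^{in})$, a ``qualitative blow-up with a worse constant'' does not suffice. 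The exponent bookkeeping you struggle with ($M_{\alpha+\lambda}$, $M_{\beta+\lambda}$, regions $x\gtrless 1$, etc.) is a symptom of expanding the product $(x^\lambda+y^\lambda)K(x,y)$ term by term instead of symmetrizing each factor first; once you use the two AM--GM bounds above, the bilinear form collapses to $M_\lambda^2$ exactly, with the constant $4$.
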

%%%%%%%%%%%%%%%%

The next step towards the well-posedness of \eqref{pa1} in a suitable framework is the uniqueness issue, for which we establish the following result. Not surprisingly, its proof follows the lines of the uniqueness results obtained for the coagulation-fragmentation equation, see \cite{BLL19, EMRR05, Giri13, Stew90b}, and the references therein.

%%%%%%%%%%%%%%%%
\begin{proposition}[Uniqueness]\label{propa4}
Assume that $K$ is given by \eqref{pa7} with $\alpha\ge 0$ and that $b$ satisfies \eqref{pa2} and \eqref{pa4}. Consider an initial condition $f^{in}\in X_0 \cap X_1^+$ and $T>0$. There is at most one weak solution $f$ to \eqref{pa1} on $[0,T)$ such that 
\begin{equation}
M_{1+\beta}(f)\in L^1(0,t) \;\text{ for each }\; t\in (0,T)\,. \label{pa11}
\end{equation}
\end{proposition}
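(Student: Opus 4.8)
\emph{Proof idea.} The plan is to establish a Gronwall estimate for the difference of two solutions in the space $X_0 \cap X_1$, along the lines of the uniqueness proofs for the coagulation-fragmentation equation in \cite{BLL19, EMRR05, Giri13, Stew90b}. So let $f_1$ and $f_2$ be two weak solutions to \eqref{pa1} on $[0,T)$ satisfying \eqref{pa11}, put $g := f_1 - f_2$, and note that $g(0)=0$ while $g(t)\in X_0\cap X_1$ with $M_{1+\beta}(g(t))<\infty$ for a.e.\ $t\in(0,T)$. Since the right-hand side of \eqref{pa5c} is bilinear and $(x,y)\mapsto K(x,y)f(t,x)f(t,y)$ is symmetric (so that only the symmetrization of $\zeta_\phi$ in $(x,y)$ contributes, and it still satisfies the bounds coming from \eqref{pa2} and \eqref{pa4}), subtracting the two weak formulations gives, for every $\phi\in L^\infty(0,\infty)$ and a.e.\ $t$,
\begin{equation*}
\frac{d}{dt}\int_0^\infty \phi(x)\, g(t,x)\, dx = \frac12 \int_0^\infty\int_0^\infty \zeta_\phi(x,y)\, K(x,y)\, g(t,x)\,\big[f_1(t,y)+f_2(t,y)\big]\, dy\, dx\,.
\end{equation*}
I would then (formally) insert $\phi(x) = (1+x)\,\mathrm{sign}(g(t,x))$, which turns the left-hand side into $\frac{d}{dt}\|g(t)\|_{X_0\cap X_1}$.

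With this choice the key pointwise estimate comes from \eqref{pa2} and \eqref{pa4}, which give $\int_0^{x+y}(1+z)\,b(z,x,y)\,dz \le \beta_0 + x + y$. Writing $\sigma(z):=\mathrm{sign}(g(t,z))$, so that $\sigma(x)g(t,x)=|g(t,x)|$ and $-\sigma(y)\sigma(x)\le 1$,
\begin{align*}
\zeta_\phi(x,y)\, g(t,x) &= \left(\int_0^{x+y}(1+z)\sigma(z)\,b(z,x,y)\, dz\right) g(t,x) - (1+x)|g(t,x)| - (1+y)\sigma(y)\, g(t,x) \\
&\le (\beta_0+x+y)|g(t,x)| - (1+x)|g(t,x)| + (1+y)|g(t,x)| \le \beta_0(1+y)|g(t,x)|\,,
\end{align*}
using $\beta_0>2$ in the last step. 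Moreover, since $\alpha\ge 0$ and $\beta\le 1$ entail $x^\alpha+x^\beta\le 2(1+x)$, the explicit form \eqref{pa7a} of $K$ yields
\begin{equation*}
\int_0^\infty (1+y)\, K(x,y)\,\big[f_1(t,y)+f_2(t,y)\big]\, dy \le 2(1+x)\,\mu(t)\,, \qquad \mu(t) := \sum_{i=1}^2\big(M_0(f_i(t)) + 2M_1(f_i(t)) + M_{1+\beta}(f_i(t))\big)\,,
\end{equation*}
after bounding $M_\alpha$ and $M_\beta$ by $M_0+M_1$ and $M_{1+\alpha}$ by $M_1+M_{1+\beta}$, all of which rely on $0\le\alpha\le\beta\le1$. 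Because $f_i\in C([0,T),X_{0,w})\cap L^\infty((0,T),X_1)$ has locally bounded zeroth and first moments and $M_{1+\beta}(f_i)\in L^1(0,t)$ by \eqref{pa11}, the function $\mu$ belongs to $L^1(0,t)$ for every $t<T$.

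Combining the three displays gives $\frac{d}{dt}\|g(t)\|_{X_0\cap X_1} \le \beta_0\,\mu(t)\,\|g(t)\|_{X_0\cap X_1}$ for a.e.\ $t\in(0,T)$, so that, since $g(0)=0$, Gronwall's lemma forces $\|g(t)\|_{X_0\cap X_1}=0$ for all $t\in[0,T)$, i.e.\ $f_1=f_2$.

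The point requiring care — and the main technical obstacle — is that $\phi(x)=(1+x)\,\mathrm{sign}(g(t,x))$ is neither bounded in $x$ nor a legitimate (time-independent) test function in Definition~\ref{defa1}, so the computation above is only formal. This is overcome exactly as in \cite{BLL19, EMRR05, Giri13, Stew90b}: one replaces $1+x$ by the bounded truncation $W_n(x):=\min\{1+x,n\}$ and $\mathrm{sign}$ by a smooth, odd, bounded approximation $\mathrm{sign}_\varepsilon$ with non-negative primitive $j_\varepsilon$ satisfying $j_\varepsilon(r)\to|r|$, derives the analogue of the identity above for $t\mapsto\int_0^\infty j_\varepsilon(g(t,x))W_n(x)\,dx$ by a standard chain-rule argument, and then lets $\varepsilon\to 0$ and $n\to\infty$. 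Monotone convergence handles the left-hand side, while dominated convergence together with an absolute-continuity-of-the-integral argument handle the right-hand side; it is precisely in the limit $n\to\infty$ that the moment control \eqref{pa11} is needed, to ensure that the contribution of large particle sizes to the collisional term vanishes. This recovers the differential inequality above and completes the proof.
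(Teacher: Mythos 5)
Your proposal is correct and takes essentially the same route as the paper's proof: subtract the two weak formulations, test with a time-dependent signed weight, bound $\zeta_\phi$ through \eqref{pa2} and \eqref{pa4} and the kernel through \eqref{pa7} with $0\le\alpha\le\beta\le 1$, and close a Gronwall estimate whose integrating factor involves $M_0$, $M_1$ and $M_{1+\beta}$, the last being locally integrable by \eqref{pa11}. The only cosmetic difference is the weight — you use $1+x$ (so the $X_0\cap X_1$ norm) while the paper uses $w(x)=\max\{x^\alpha,x\}$ with a quadrant-by-quadrant bound — and both arguments are formal at exactly the same point (unbounded weight, sign function), which the paper likewise settles only by referring to the truncation arguments in \cite{Giri13, Stew90b}.
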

%%%%%%%%%%%%%%%%

Gathering the outcome of Theorem~\ref{thma2} and Proposition~\ref{propa4} provides the global well-posedness of \eqref{pa1} when $0\le \alpha\le \beta \le 1$ and $\lambda\ge 1$ and its local well-posedness when $0\le \alpha\le \beta \le 1$ and $\lambda\in [0,1)$ in a suitable functional setting, as reported below.

%%%%%%%%%%%%%%%%
\begin{corollary}[Well-posedness]\label{cora5}
Assume that $K$ is given by \eqref{pa7} with $\alpha\ge 0$ and that $b$ satisfies \eqref{pa2}, \eqref{pa3}, \eqref{pa4}, and \eqref{pa8}. Given $f^{in}\in X_0\cap X_{1+\beta}^+$, there is a unique mass-conserving weak solution $f$ to \eqref{pa1} on $[0,\infty)$ when $\lambda\ge 1$ and on $[0,T_0(f^{in}))$ when $\lambda\in [0,1)$ which is locally bounded in $X_{1+\beta}$. 
\end{corollary}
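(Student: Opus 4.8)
The plan is to read off existence from Theorem~\ref{thma2} and uniqueness from Proposition~\ref{propa4}, the only real work being to check that the solution delivered by the former belongs to the class handled by the latter.

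First I would invoke existence. Since $\alpha\ge 0$ and $\alpha\le\beta\le 1$, one has $0\le\beta\le 1$, so $f^{in}\in X_0\cap X_{1+\beta}^+\subset X_0\cap X_1^+$ and Theorem~\ref{thma2} applies: part~(a) when $\lambda\ge 1$ and part~(b) when $\lambda\in[0,1)$ produce a mass-conserving weak solution $f$ to \eqref{pa1}, defined on $[0,\infty)$ in the first case and on $[0,T_0(f^{in}))$ in the second; write $[0,T_\star)$ for the corresponding interval. When $\beta>0$ we have $m:=1+\beta\in(1,2]$ and $f^{in}\in X_m$, so Theorem~\ref{thma2}(c) gives $f\in L^\infty((0,t),X_m)$ with $M_m(f(t))\le M_m(f^{in})$ for every $t\in(0,T_\star)$; that is, $f$ is locally bounded in $X_{1+\beta}$. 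When $\beta=0$ the constraints $0\le\alpha\le\beta$ force $\alpha=0$, hence $\lambda=0$ and $1+\beta=1$, and local boundedness in $X_{1+\beta}=X_1$ is already built into the definition of a weak solution (see \eqref{pa5a}) together with mass conservation \eqref{pa6}.

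Next I would turn to uniqueness. The assumptions of Corollary~\ref{cora5} contain those of Proposition~\ref{propa4} (namely $\alpha\ge 0$ together with \eqref{pa2} and \eqref{pa4}). If $g$ is any weak solution to \eqref{pa1} on $[0,T_\star)$ that is locally bounded in $X_{1+\beta}$, then, $g$ being non-negative, $s\mapsto M_{1+\beta}(g(s))=\|g(s)\|_{X_{1+\beta}}$ lies in $L^\infty(0,t)\subset L^1(0,t)$ for each $t\in(0,T_\star)$, so $g$ satisfies \eqref{pa11}; the same applies to the solution $f$ constructed above. Proposition~\ref{propa4} then yields $g=f$ on $[0,T_\star)$, and in particular there is exactly one mass-conserving weak solution locally bounded in $X_{1+\beta}$, as asserted.

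I do not expect a genuine obstacle here; the single point that needs a moment's attention is the degenerate case $\beta=0$, in which Theorem~\ref{thma2}(c) is vacuous and the required $X_{1+\beta}$-bound instead comes for free from Definition~\ref{defa1} and mass conservation.
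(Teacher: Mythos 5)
Your proposal is correct and follows exactly the route the paper intends: the corollary is stated as an immediate consequence of gathering Theorem~\ref{thma2} (existence, with part~(c) applied to $m=1+\beta$ giving the $X_{1+\beta}$-bound that places the solution in the uniqueness class \eqref{pa11}) and Proposition~\ref{propa4}. Your explicit treatment of the degenerate case $\beta=0$, where the $X_{1+\beta}=X_1$ bound comes from \eqref{pa5a} and mass conservation rather than Theorem~\ref{thma2}(c), is a detail the paper leaves implicit but is handled correctly.
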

%%%%%%%%%%%%%%%%

We finally show that, in contrast to Smoluchowski's coagulation equation, negative exponents in the collision kernel $K$ are not compatible with the existence of mass-conserving weak solutions to \eqref{pa1}, even locally in time. This striking feature is observed in \cite{ErPa07} in the particular case $\alpha=\beta<0$. We provide here a rigorous proof which borrows arguments from the study of the occurrence of instantaneous gelation in Smoluchowski's coagulation equation \cite{BLL19, CadC92, vanD87c}.

%%%%%%%%%%%%%%%%
\begin{theorem}[Non-existence]\label{thma6}
Assume that $K$ is given by \eqref{pa7} and that there is $\nu\in (-1,0]$ such that \begin{equation}
b(z,x,y) = (\nu+2) z^\nu \mathbf{1}_{(0,x)}(z) x^{-\nu-1} + (\nu+2) z^\nu \mathbf{1}_{(0,y)}(z) y^{-\nu-1}\,, \qquad (z,x,y)\in (0,\infty)^3\,. \label{pa12}
\end{equation}
Consider an initial condition $f^{in}\in X_0 \cap X_1^+$  with $\varrho := M_1(f^{in})>0$ and assume further that $f^{in}\in X_{m_0}$ for some $m_0>1$. If $\alpha<0$, then there is no mass-conserving weak solution to \eqref{pa1}.
\end{theorem}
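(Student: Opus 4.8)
The plan is to argue by contradiction, assuming a mass-conserving weak solution $f$ to \eqref{pa1} exists on some interval $[0,T)$, and to exploit the singularity of $K$ at small sizes to produce a differential inequality for a negative moment that blows up instantaneously, contradicting the integrability demanded by \eqref{pa5b}. First I would fix a small parameter $\theta\in(0,1)$ (to be chosen, say $\theta=-\alpha$ or something comparable) and test \eqref{pa5c} — or rather its differentiated form from Remark~\ref{rema1.5} — against the bounded truncated weight $\phi(x)=\phi_\varepsilon(x):=\min\{x^{-\theta},\varepsilon^{-\theta}\}$ for $\varepsilon\in(0,1)$. With the explicit daughter distribution \eqref{pa12} one can compute $\zeta_{\phi_\varepsilon}(x,y)$ exactly on the region where the truncation is inactive: since $\int_0^x z^\nu b$-type integrals against powers are elementary, the fragmentation gain contributes $\int_0^{x+y}\phi(z)b(z,x,y)\,dz$, which for $\phi(z)=z^{-\theta}$ equals $\tfrac{\nu+2}{\nu+2-\theta}(x^{-\theta}+y^{-\theta})$ provided $\theta<\nu+2$ (guaranteed since $\nu>-1$ gives $\nu+2>1>\theta$). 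Hence $\zeta_{\phi}(x,y)=\bigl(\tfrac{\nu+2}{\nu+2-\theta}-1\bigr)(x^{-\theta}+y^{-\theta})=\tfrac{\theta}{\nu+2-\theta}(x^{-\theta}+y^{-\theta})>0$, i.e. the right-hand side is a \emph{positive} multiple of $\int\int (x^{-\theta}+y^{-\theta})K(x,y)f(s,x)f(s,y)\,dydx$.

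The second step is the lower bound on $K$: with $K(x,y)=x^\alpha y^\beta+x^\beta y^\alpha$ and $\alpha<0\le$ (we may also treat $\beta$ of either sign, but $\alpha<0$ is what matters), one estimates $(x^{-\theta}+y^{-\theta})K(x,y)\ge x^{\alpha-\theta}y^\beta$ (keeping just one term, restricting to, say, $y\ge 1$ so that $y^\beta\ge y^{\beta\wedge 0}$ can be handled, or more cleanly restricting the $y$-integration to a fixed size range $[a,2a]$ on which $K(x,y)\gtrsim x^\alpha$ uniformly). The cleanest route is: for any $R>0$, bound the double integral below by $c_R\,M_{-\theta+\alpha}\bigl(f(s)\bigr)$ times $\int_0^R y^\beta f(s,y)\,dy$ minus controllable terms, and then use mass conservation \eqref{pa6} together with $f\in L^\infty((0,T),X_1)$ to show $\int_0^R y^{\beta} f(s,y)\,dy$ is bounded below by a positive constant on a time interval, once $R$ is large — here I would use that $M_1(f(s))=\varrho>0$ is constant and that, by \eqref{pa5a}, superlinear moments cannot instantaneously carry all the mass to infinity (this is where the extra hypothesis $f^{in}\in X_{m_0}$, $m_0>1$, and part (c)-type moment bounds enter, if available without mass transfer; if not, one argues directly that some fixed-size strip carries positive mass for a short time by continuity of $f$ in $X_{0,w}$). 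The upshot is a differential inequality of the form
\[
\frac{d}{dt} M_{\phi_\varepsilon}(f(t)) \ge c\, M_{-\theta}(f(t)) - C \ge c'\, M_{\phi_\varepsilon}(f(t))^{1+\eta} - C
\]
for some $c,c',\eta>0$ and all small $\varepsilon$, valid for a.e.\ $t$ in a short interval, using that $M_{-\theta+\alpha}\ge$ const$\cdot M_{-\theta}$ on large sizes while on small sizes $x^{\alpha-\theta}\ge x^{-\theta}$ since $\alpha<0$, combined with an interpolation/Jensen lower bound relating $M_{-\theta}$ to $M_{\phi_\varepsilon}$ and the controlled mass. Letting $\varepsilon\to0$ by monotone convergence forces $M_{-\theta}(f(t))=+\infty$ for $t>0$ arbitrarily close to $0$ (superlinear ODE blow-up from a finite or even infinite-but-growing datum), which contradicts \eqref{pa5b}: indeed $\iint K|f||f| = \iint (x^\alpha y^\beta+x^\beta y^\alpha)f f\,dxdy$ being in $L^1(0,t)$ forces $M_\alpha(f(s))$-type quantities to be finite for a.e.\ $s$, and $\alpha<0$ makes $M_{-\theta}$ finiteness a consequence for $\theta=-\alpha$, or one chooses $\theta$ so that the contradiction is with \eqref{pa5b} directly.

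The main obstacle I anticipate is \textbf{controlling the loss-type contribution and the large-size tail rigorously}, i.e.\ justifying the lower bound $\iint(x^{-\theta}+y^{-\theta})K f f \ge c\,M_{-\theta}(f) - C$ uniformly on a time interval. The positive $\zeta_\phi$ computed above already removes the sign worry from the loss term (it has been absorbed), so the real work is showing that the "reservoir" factor $\int_0^R y^{\beta}f(s,y)\,dy$ stays bounded away from zero: one cannot simply use $M_1(f)=\varrho$ if $\beta<1$, so I would instead fix $R$ large, write $\int_0^R y f \ge \varrho - \int_R^\infty y f$, bound $\int_R^\infty y f \le R^{1-m_0}M_{m_0}(f) \le R^{1-m_0} M_{m_0}(f^{in})$ using the moment hypothesis and (if available) monotonicity of $M_{m_0}$, choose $R$ so this is $<\varrho/2$, and then $\int_0^R y^\beta f \ge R^{\beta-1}\int_0^R y f \ge R^{\beta-1}\varrho/2>0$. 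A secondary technical point is making the truncation-and-pass-to-limit step clean — integrating the differential inequality in time from a small $t_1>0$ where $M_{\phi_\varepsilon}(f(t_1))$ is already bounded below (by continuity in $X_{0,w}$ and Fatou), then deriving that $M_{\phi_\varepsilon}(f(t))\to\infty$ as $\varepsilon\to0$ for every $t>t_1$, hence $M_{-\theta}(f(t))=\infty$, contradicting a.e.-finiteness from \eqref{pa5b}. With $\alpha<0$ the exponents line up: choosing $\theta\in(0,-\alpha]$ makes $-\theta+\alpha\le -\theta<0$ throughout, which is exactly what drives the super-linearity and the blow-up.
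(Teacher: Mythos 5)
The decisive gap is in your final step: the superlinear differential inequality $\frac{d}{dt}M_{\phi_\varepsilon}(f(t))\ge c'\,M_{\phi_\varepsilon}(f(t))^{1+\eta}-C$ does \emph{not} force $M_{-\theta}(f(t))=+\infty$ for $t$ arbitrarily close to $0$ unless $M_{-\theta}(f^{in})=\infty$. The hypotheses of Theorem~\ref{thma6} allow, for instance, $f^{in}$ supported away from the origin, in which case every negative moment of $f^{in}$ is finite; the ODE comparison then only shows that a mass-conserving solution cannot persist beyond a time of order $M_{-\theta}(f^{in})^{-\eta}>0$, and a putative solution on a shorter interval $[0,T)$ is not contradicted at all (nor is \eqref{pa5b} violated). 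So, as written, your argument proves a statement of the same type as Theorem~\ref{thma3} (an upper bound on the existence time), not the non-existence asserted in Theorem~\ref{thma6}, which excludes solutions on \emph{every} interval. The paper closes exactly this gap with two ingredients absent from your plan: first, Lemma~\ref{leme1} shows that mass conservation together with $f^{in}\in X_{m_0}$ forces $M_m(f(t))$, and in particular $M_m(f^{in})$, to be finite for \emph{all} $m<1$, down to the critical order $m=-\nu-1$ (far beyond what \eqref{pa5b} controls), which is what legitimizes the moment computation at such orders; second, in the main argument the weight exponent $m\in(-\nu-1,0)$ is sent to $-\nu-1$, where the constant $\frac{1-m}{m+\nu+1}$ produced by $\zeta_{(x+\varepsilon)^m}$ blows up, so that the resulting bound on the existence time is proportional to $m+\nu+1$ times quantities that remain under control, hence tends to $0$; this is what yields $T_*=0$. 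Your plan fixes a single $\theta$ (e.g.\ $\theta=-\alpha$) and never exploits this critical limit, so the mechanism that turns a finite-time bound into instantaneous non-existence is missing.

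A related computational slip matters here: with $b$ given by \eqref{pa12}, $\int_0^x z^{-\theta}\,(\nu+2)z^{\nu}x^{-\nu-1}\,dz$ converges only for $\theta<\nu+1$ (not $\theta<\nu+2$) and equals $\frac{\nu+2}{\nu+1-\theta}\,x^{-\theta}$, so that $\zeta_{z^{-\theta}}(x,y)=\frac{1+\theta}{\nu+1-\theta}\,\bigl(x^{-\theta}+y^{-\theta}\bigr)$, not $\frac{\theta}{\nu+2-\theta}\bigl(x^{-\theta}+y^{-\theta}\bigr)$. Since $\nu+1\le 1$, the admissible range of $\theta$ is genuinely restricted (your choice $\theta=-\alpha$ may be inadmissible when $-\alpha\ge\nu+1$, and must then be replaced by $\theta<\min\{\nu+1,-\alpha\}$ to retain the comparison with \eqref{pa5b}), and the divergence of the correct constant as $\theta\uparrow\nu+1$ is precisely the effect the paper uses; with the constant you computed, no such divergence is even visible. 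The remaining ingredients of your outline (non-negativity of $\zeta$ for non-increasing weights, the lower bound on $M_\beta(f)$ from $M_1(f)=\varrho$ and the monotone $m_0$-moment of Lemma~\ref{lemb4}, interpolation with the conserved mass to get superlinearity) do coincide with the paper's second step, but without the limit $\theta\uparrow\nu+1$ and an a priori control of arbitrarily negative moments in the spirit of Lemma~\ref{leme1} (or a uniform treatment of the truncated quantities as $\theta$ varies), the argument does not reach the stated theorem.
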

%%%%%%%%%%%%%%%%

%%%%%%%%%%%%%%%%
\begin{remark}\label{remz}
Observe that, when $\nu\in (-1,0]$, the daughter distribution $b$ given by \eqref{pa12} satisfies \eqref{pa2}, \eqref{pa3} (with $\bar{b}=\bar{b}_\nu$ defined in \eqref{zp4}), and \eqref{pa4} (with $\beta_0=2(\nu+2)/(\nu+1)$). Moreover, still for $\nu\in (-1,0]$, $\bar{b}=\bar{b}_\nu$ satisfies \eqref{pa8} with $p\in (1,2)\cap (1,1/|\nu|)$ and $B_p=2(\nu+2)^p/(1+\nu p)$, as well as \eqref{pa10} with $\gamma_\lambda = (\nu+2)/(\nu+\lambda+1)$ for $\lambda\in [0,1)$. Therefore, Theorems~\ref{thma2} and~\ref{thma3} apply to this particular choice of daughter distribution function. 
\end{remark}
%%%%%%%%%%%%%%%%

\bigskip

 We now describe the content of this paper. The next section is devoted to the derivation of additional properties of weak solutions to \eqref{pa1} in the sense of Definition~\ref{defa1}, including strong continuity with respect to time (Proposition~\ref{propb1}), a criterion for mass conservation (Proposition~\ref{propb2}), and some tail control induced by the assumption \eqref{pa3} on $b$ (Lemma~\ref{lemb3}). In Section~\ref{sec3}, we establish the well-posedness of \eqref{pa1} under the sole assumptions \eqref{pa2} and \eqref{pa4} on $b$, provided the collision kernel is bounded from above by $k_0 xy/(x+y)$. This assumption implies that $\mathcal{B}$ is a Lipschitz continuous map on $X_1$ and on $X_0\cap X_1$ and the well-posedness of \eqref{pa1} in that case is proved with Banach's fixed point theorem. Section~\ref{sec4} is devoted to the proof of Theorem~\ref{thma2} which relies on a compactness approach. Estimates for large and small sizes are first derived to exclude escape of matter as $x\to\infty$ or $x\to 0$ and are supplemented with a uniform integrability estimate which prevents concentration at a finite size. Thanks to Dunford-Pettis' theorem, these estimates guarantee the weak compactness of the approximating sequence with respect to the size variable. The compactness with respect to time is next obtained as a consequence of a time equicontinuity estimate. We are then left with passing to the limit as the approximating parameter converges to zero to complete the proof. The uniqueness and finite time existence issues are next discussed in Sections~\ref{sec5} and~\ref{sec6}, respectively. We end up the paper with the proof of the non-existence result in Section~\ref{sec7}. As in \cite{BLL19, CadC92, vanD87c}, the main step is to show that, if $f$ is a mass-conserving weak solution to \eqref{pa1} on $[0,T)$ and $\alpha<0$, then $M_m(f(t))$ must be finite for all $t\in [0,T)$ and $m\in (-\infty,1)$. This implies in particular that, for such a solution to exist, the initial condition $f^{in}$ should necessarily belong to $X_m$ for any $m\in (-\infty,1)$. But additional information is actually provided by this step which leads to $T=0$, thereby excluding the existence of mass-conserving weak solutions, even for short times.

%%%%%%%%%%%%%%%%
%%%%%%%%%%%%%%%%
\section{Basic properties}\label{sec2}
%%%%%%%%%%%%%%%%
%%%%%%%%%%%%%%%%

Let us first recall that the regularity properties of weak solutions listed in Definition~\ref{defa1} guarantee that this definition is meaningful. Indeed, if $\phi\in L^{\infty}(0,\infty)$ and $b$ satisfies \eqref{pa2} and \eqref{pa4}, then

\begin{equation}
|\zeta_{\phi}(x,y)| \le \int_0^{x+y} |\phi(z)| b(z,x,y)\ dz + |\phi(x)| + |\phi(y)| \le (\beta_0+2) \|\phi\|_{L^\infty(0,\infty)} \label{zp2}
\end{equation}
for $(x,y)\in (0,\infty)^2$. This estimate, along with \eqref{pa5a} and \eqref{pa5b}, ensures that all the terms involved in \eqref{pa5c} are well-defined.

%%%%%%%%%%%%%%%%
%%%%%%%%%%%%%%%%
\subsection{Strong continuity}\label{sec2.1}
%%%%%%%%%%%%%%%%
%%%%%%%%%%%%%%%%

We first show that the time continuity of weak solutions to \eqref{pa1} is actually with respect to the norm-topology of $X_0$.

%%%%%%%%%%%%%%%%
\begin{proposition}\label{propb1}
Let $T\in (0,\infty]$, $f^{in}\in X_0\cap X_1^+$, and consider a weak solution $f$ to \eqref{pa1} on $[0,T)$. Then $f\in C([0,T),X_0)$.
\end{proposition}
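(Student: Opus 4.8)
The plan is to upgrade the weak continuity $f\in C([0,T),X_{0,w})$ guaranteed by Definition~\ref{defa1} to norm continuity in $X_0$. The natural strategy is to show that $t\mapsto M_0(f(t)) = \|f(t)\|_{X_0}$ is continuous on $[0,T)$: combined with weak continuity and the fact that $X_0 = L^1(0,\infty)$ has the Radon--Riesz (Kadec--Klee) property with respect to its positive cone — namely, if $g_n \rightharpoonup g$ weakly in $L^1$ with $g_n,g\ge 0$ and $\|g_n\|_{L^1}\to\|g\|_{L^1}$, then $g_n\to g$ in $L^1$ — this yields the claim. Indeed, for any sequence $t_n\to t$ in $[0,T)$ we have $f(t_n)\rightharpoonup f(t)$ in $X_{0,w}$, all functions are non-negative, and norm convergence of $M_0$ would then force $f(t_n)\to f(t)$ strongly in $X_0$; a standard subsequence argument promotes this to continuity of $t\mapsto f(t)$.

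The key step is therefore the continuity of $t\mapsto M_0(f(t))$. I would obtain it from Remark~\ref{rema1.5}: taking $\phi\equiv 1$ (which lies in $L^\infty(0,\infty)$), the map $t\mapsto M_0(f(t))$ belongs to $W^{1,1}_{loc}(0,T)$ — hence is (absolutely) continuous on $(0,T)$ — with
\begin{equation*}
\frac{d}{dt} M_0(f(t)) = \frac12 \int_0^\infty\int_0^\infty \zeta_1(x,y) K(x,y) f(t,x) f(t,y)\ dy\,dx \quad\text{for a.e. } t\in(0,T),
\end{equation*}
where $\zeta_1(x,y) = \int_0^{x+y} b(z,x,y)\,dz - 2$ is bounded by $\beta_0 - 2$ thanks to \eqref{pa4} (and is $\ge 0$ by \eqref{pa2}, though we only need boundedness here). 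Continuity at $t=0$ requires a small additional argument: from \eqref{pa5c} with $\phi\equiv 1$ we get $M_0(f(t)) - M_0(f^{in}) = \tfrac12\int_0^t\int_0^\infty\int_0^\infty \zeta_1 K f(s,x)f(s,y)\,dy\,dx\,ds$, and the right-hand side tends to $0$ as $t\to 0$ because the integrand is in $L^1((0,t)\times(0,\infty)^2)$ by \eqref{pa5b} and \eqref{zp2}; so $M_0(f(t))\to M_0(f^{in})$. This gives continuity of $M_0\circ f$ on all of $[0,T)$.

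The main obstacle — really the only delicate point — is justifying the Radon--Riesz step rigorously and dealing with the endpoint $t=0$ consistently. For the Radon--Riesz property in $L^1$: writing $g_n = f(t_n)$, $g = f(t)$, one has $|g_n - g| = (g_n - g)^+ + (g_n - g)^- = 2(g_n-g)^+ + (g - g_n)$, and $\int(g-g_n) \to 0$ by weak convergence (testing against $\phi\equiv 1$), while $(g_n-g)^+ \le g_n$ with $(g_n - g)^+ \to 0$ a.e. along a further subsequence (extracted using that weak $L^1$ convergence plus the $L^1$ bound gives a.e.-convergent subsequences only after more work — alternatively one invokes that $(g_n-g)^+ \rightharpoonup 0$ weakly and is dominated, so Vitali/Dunford--Pettis applies since $\{g_n\}$ is uniformly integrable by weak compactness). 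The cleanest route is: $\{f(t_n)\}$ is weakly sequentially compact in $X_0$, hence uniformly integrable; any $X_0$-weak limit point of $\{f(t_n)\}$ must equal $f(t)$ by uniqueness of weak limits; uniform integrability plus a.e.-tightness plus the convergence $\|f(t_n)\|_1 \to \|f(t)\|_1$ forces strong $L^1$ convergence of the whole sequence. I would spell this out invoking the Dunford--Pettis characterization already used elsewhere in the paper, so the argument stays self-contained.
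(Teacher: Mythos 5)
There is a genuine gap, and it sits exactly at the point you call ``the only delicate point'': the Radon--Riesz (Kadec--Klee) step is false in $L^1$. For non-negative functions, weak convergence in $L^1$ already forces convergence of the norms (test against $\phi\equiv 1$), so your continuity of $t\mapsto M_0(f(t))$ adds nothing beyond the weak continuity \eqref{pa5a}; if the scheme worked, it would show that every weakly continuous non-negative curve in $L^1$ is strongly continuous, which is false. Concretely, $g_n(x)=\bigl(1+\sin(nx)\bigr)\mathbf{1}_{(0,2\pi)}(x)$ satisfies $g_n\ge 0$, $g_n\rightharpoonup g:=\mathbf{1}_{(0,2\pi)}$ weakly in $L^1(0,\infty)$, $\|g_n\|_{X_0}=\|g\|_{X_0}=2\pi$, yet $\|g_n-g\|_{X_0}=4$ for all $n$. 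The same example defeats both of your repair attempts: weak $L^1$ convergence does not yield a.e.\ convergent subsequences (no subsequence of $(g_n)$ converges a.e.\ to $g$), and $h\mapsto h^+$ is not weakly continuous, so $(g_n-g)^+\not\rightharpoonup 0$ (here $(g_n-g)^+=\max\{\sin(nx),0\}\rightharpoonup 1/\pi$ on $(0,2\pi)$); uniform integrability and tightness hold in this example as well, so Vitali/Dunford--Pettis cannot rescue the conclusion. A further small slip: $\zeta_1\ge 0$ does not follow from \eqref{pa2} alone (only $\int_0^{x+y}b\,dz\ge 1$ does; the lower bound $2$ needs \eqref{pa3}, cf.\ Remark~\ref{rema0}), though, as you note, only boundedness is used there.

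The remedy is to drop the weak-plus-norm strategy and estimate the $X_0$-distance directly, which is what the paper does. For $0\le t_1<t_2<T$ and any $\phi\in L^\infty(0,\infty)$ with $\|\phi\|_{L^\infty(0,\infty)}\le 1$, subtracting \eqref{pa5c} at $t_1$ from \eqref{pa5c} at $t_2$ and using the bound \eqref{zp2} on $\zeta_\phi$ gives
\begin{equation*}
\left|\int_0^\infty \phi(x)\bigl(f(t_2,x)-f(t_1,x)\bigr)\,dx\right| \le (\beta_0+2)\int_{t_1}^{t_2}\int_0^\infty\int_0^\infty K(x,y)f(s,x)f(s,y)\,dy\,dx\,ds\,,
\end{equation*}
and taking the supremum over such $\phi$ ($L^\infty$ being the dual of $L^1$) bounds $\|f(t_2)-f(t_1)\|_{X_0}$ by the right-hand side, which tends to $0$ as $t_2-t_1\to 0$ by absolute continuity of the integral of the $L^1$ function in \eqref{pa5b}. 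This yields $f\in C([0,T),X_0)$ in a few lines, uniformly down to $t=0$, with no appeal to any Kadec--Klee-type property.
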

%%%%%%%%%%%%%%%%

\begin{proof}
Let $\phi\in L^\infty(0,\infty)$. It readily follows from \eqref{pa5c} and \eqref{zp2} that, for $0\le t_1 < t_2 < T$, 
\begin{align*}
& \left| \int_0^\infty \phi(x) (f(t_2,x)-f(t_1,x))\ dx \right| \\
& \hspace{1cm} \le (\beta_0+2) \|\phi\|_{L^\infty(0,\infty)} \int_{t_1}^{t_2} \int_0^\infty \int_0^\infty K(x,y) f(s,x) f(s,y)\ dydxds\,.
\end{align*}
Hence, since $L^\infty(0,\infty)$ is the dual space of $L^1(0,\infty)$, 
\begin{equation*}
\int_0^\infty |f(t_2,x)-f(t_1,x)| \ dx \le (\beta_0+2) \int_{t_1}^{t_2} \int_0^\infty \int_0^\infty K(x,y) f(s,x) f(s,y)\ dydxds\,,
\end{equation*}
and the claimed continuity of $f$ readily follows from \eqref{pa5b}.
\end{proof}

%%%%%%%%%%%%%%%%
%%%%%%%%%%%%%%%%
\subsection{Mass conservation}\label{sec2.2}
%%%%%%%%%%%%%%%%
%%%%%%%%%%%%%%%%

We next provide a criterion for a weak solution to be mass-conserving.

%%%%%%%%%%%%%%%%
\begin{proposition}\label{propb2}
Let $T\in (0,\infty]$, $f^{in}\in X_0\cap X_1^+$, and consider a weak solution $f$ to \eqref{pa1} on $[0,T)$ such that
\begin{equation}
s\longmapsto \int_0^\infty \int_0^\infty (x+y) K(x,y) f(s,x) f(s,y)\ dy dx \in L^1(0,t)
\label{pb1}
\end{equation}
for all $t\in (0,T)$. Then $f$ is mass-conserving on $[0,T)$.
\end{proposition}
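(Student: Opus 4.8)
\textbf{Proof proposal for Proposition~\ref{propb2}.}

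The plan is to test the weak formulation \eqref{pa5c} (or rather its differential version from Remark~\ref{rema1.5}) with truncations of the weight $\phi(x)=x$ and pass to the limit, using the assumption \eqref{pb1} to control the error. Since $\phi=V_1$ is not bounded, we cannot plug it directly into \eqref{pa5c}; so for $j\ge 1$ I would introduce the bounded test function $\phi_j(x) := \min\{x,j\} = x\mathbf{1}_{(0,j)}(x) + j\mathbf{1}_{[j,\infty)}(x)$, which belongs to $L^\infty(0,\infty)$. Applying \eqref{pa5c} with $\phi=\phi_j$ gives, for $t\in(0,T)$,
\begin{equation*}
\int_0^\infty \phi_j(x)\big(f(t,x)-f^{in}(x)\big)\ dx = \frac12 \int_0^t\int_0^\infty\int_0^\infty \zeta_{\phi_j}(x,y) K(x,y) f(s,x) f(s,y)\ dy\,dx\,ds\,.
\end{equation*}

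The key point is to estimate $\zeta_{\phi_j}$ and show the right-hand side vanishes as $j\to\infty$. First, $0\le \phi_j(x)\le x$ and $\phi_j(x)\uparrow x$ as $j\to\infty$, so by monotone convergence the left-hand side converges to $M_1(f(t)) - M_1(f^{in})$, both terms being finite since $f\in L^\infty((0,T),X_1^+)$ and $f^{in}\in X_1$. For the right-hand side, I would use the crucial cancellation built into \eqref{pa2}: since $b(z,x,y)=0$ for $z>x+y$ and $\int_0^{x+y} z\,b(z,x,y)\,dz = x+y$, the "full" weight $\phi(x)=x$ would give $\zeta_\phi(x,y) = (x+y) - x - y = 0$. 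Hence
\begin{equation*}
\zeta_{\phi_j}(x,y) = \int_0^{x+y} \big(\phi_j(z)-z\big) b(z,x,y)\ dz + (x-\phi_j(x)) + (y-\phi_j(y))\,,
\end{equation*}
and each of these three terms is non-positive and supported where at least one of $x,y,z$ exceeds $j$. Using $z-\phi_j(z) = (z-j)_+ \le z\mathbf{1}_{\{z>j\}} \le z\mathbf{1}_{\{z>j\}}$, together with \eqref{pa4} in the form $\int_0^{x+y} b(z,x,y)\,dz\le\beta_0$ and $z\le x+y$ on the support of $b(\cdot,x,y)$, I would bound
\begin{equation*}
|\zeta_{\phi_j}(x,y)| \le \beta_0 (x+y)\mathbf{1}_{\{x+y>j\}} + x\mathbf{1}_{\{x>j\}} + y\mathbf{1}_{\{y>j\}} \le (\beta_0+1)(x+y)\mathbf{1}_{\{\max\{x,y\}>j/2\}}\,.
\end{equation*}

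Plugging this into the right-hand side and using the symmetry $K(x,y)=K(y,x)$, the integral is bounded by a constant times
\begin{equation*}
\int_0^t \int_0^\infty\int_0^\infty (x+y) K(x,y) f(s,x) f(s,y)\ \mathbf{1}_{\{\max\{x,y\}>j/2\}}\ dy\,dx\,ds\,,
\end{equation*}
and the integrand is dominated by $(x+y)K(x,y)f(s,x)f(s,y)$, which lies in $L^1((0,t)\times(0,\infty)^2)$ precisely by hypothesis \eqref{pb1} (combined with \eqref{pa5b}). Since $\mathbf{1}_{\{\max\{x,y\}>j/2\}}\to 0$ pointwise a.e.\ as $j\to\infty$, the dominated convergence theorem forces this quantity to $0$. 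Passing to the limit $j\to\infty$ in the displayed identity then yields $M_1(f(t)) = M_1(f^{in})$ for every $t\in(0,T)$, which is \eqref{pa6}.

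The main obstacle is the justification that the right-hand side genuinely vanishes: one must be careful that the three non-positive contributions to $\zeta_{\phi_j}$ do not conspire with an unbounded number of particles to produce a non-vanishing limit, which is exactly why the integrability assumption \eqref{pb1} is imposed — it provides the integrable dominating function needed for the dominated convergence argument. Everything else (monotone convergence on the left, the algebraic identity $\zeta_{V_1}\equiv 0$) is routine. One minor technical point worth checking is measurability and finiteness of all integrands for a.e.\ $s$, which follows from \eqref{pa5b} and Fubini's theorem.
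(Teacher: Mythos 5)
Your argument is correct and essentially the paper's own proof: the paper tests with $\phi_A(x)=x\mathbf{1}_{(0,A)}(x)$ and a case-by-case analysis giving $|\zeta_{\phi_A}(x,y)|\le x+y$, then invokes \eqref{pb1} as the dominating function for dominated convergence, exactly as you do with $\phi_j(x)=\min\{x,j\}$ and the cancellation $\zeta_{V_1}\equiv 0$. One harmless slip: in your decomposition the terms $x-\phi_j(x)$ and $y-\phi_j(y)$ are non-negative (only the integral term is non-positive), but since you subsequently bound $|\zeta_{\phi_j}|$ by the triangle inequality this does not affect the argument.
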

%%%%%%%%%%%%%%%%

\begin{proof}
For $A>0$ and $x\in (0,\infty)$, we set $\phi_A(x) := x \mathbf{1}_{(0,A)}(x)$, and observe that, by \eqref{pa2}:
\begin{itemize}
	\item if $(x,y)\in (0,A)^2$ with $x+y\in (0,A)$, then $\zeta_{\phi_A}(x,y)=0$;
	\item if $(x,y)\in (0,A)^2$ with $x+y >A$, then 
	\begin{align*}
	\zeta_{\phi_A}(x,y) & = \int_0^A z b(z,x,y)\ dz - x - y = - \int_A^{x+y} z b(z,x,y)\ dz \in (-(x+y),0)\,;
	\end{align*}
	\item if $(x,y)\in (0,A)\times (A,\infty)$, then
	\begin{equation*}
	\zeta_{\phi_A}(x,y) = y- \int_A^{x+y} z b(z,x,y)\ dz \in (-x,y)\,;
	\end{equation*}
	\item if $(x,y)\in (A,\infty)\times (0,A)$, then
	\begin{equation*}
	\zeta_{\phi_A}(x,y) = x - \int_A^{x+y} z b(z,x,y)\ dz \in (-y,x)\,;
	\end{equation*}
	\item if $(x,y)\in (A,\infty)^2$, then
	\begin{equation*}
	\zeta_{\phi_A}(x,y) = x+y - \int_A^{x+y} z b(z,x,y)\ dz \in (0,x+y)\,.
	\end{equation*}
\end{itemize}
Overall, we have shown that 
\begin{equation*}
-(x+y) \le \zeta_{\phi_A}(x,y) \le (x+y)\,, \qquad (x,y)\in (0,\infty)^2\,,
\end{equation*}
which gives, together with \eqref{pa5c},
\begin{equation*}
\int_0^\infty \phi_A(x) f(t,x)\ dx \le \int_0^\infty \phi_A(x) f^{in}(x)\ dx + I(t) \le M_1(f^{in}) + I(t)\,, \quad t\in [0,T)\,,
\end{equation*}
with
\begin{equation*}
I(t) := \frac{1}{2} \int_0^t \int_0^\infty \int_0^\infty (x+y) K(x,y) f(s,x) f(s,y)\ dydxds\,.
\end{equation*}
Since the right hand side of the above inequality is finite by \eqref{pb1} and does not depend on $A$, we may let $A\to\infty$ and deduce from Fatou's lemma that $M_1(f(t))<\infty$ with $M_1(f(t)) \le M_1(f^{in}) + I(t)$ for all $t\in [0,T)$. Next, since
\begin{equation*}
\lim_{A\to \infty} \zeta_{\phi_A}(x,y) = 0\,, \qquad (x,y)\in (0,\infty)^2\,,
\end{equation*}
and
 \begin{equation*}
|\zeta_{\phi_A}(x,y)| K(x,y) f(s,x) f(s,y)\le (x+y) K(x,y) f(s,x) f(s,y)
\end{equation*}
for $(s,x,y)\in (0,t)\times (0,\infty)^2$, we infer from \eqref{pb1} and Lebesgue's dominated convergence theorem that
\begin{equation*}
\lim_{A\to\infty} \int_0^t \int_0^\infty \int_0^\infty \zeta_{\phi_A}(x,y) K(x,y) f(s,x) f(s,y)\ dydxds = 0\,.
\end{equation*}
Consequently, recalling \eqref{pa5c} and the property $f(t)\in X_1$ for $t\in [0,T)$, we conclude that
\begin{equation*}
M_1(f(t)) = \lim_{A\to\infty} \int_0^\infty \phi_A(x) f(t,x)\ dx = \lim_{A\to\infty} \int_0^\infty \phi_A(x) f^{in}(x)\ dx = M_1(f^{in})\,,
\end{equation*}
and the proof is complete.
\end{proof}

%%%%%%%%%%%%%%%%
%%%%%%%%%%%%%%%%
\subsection{Tail control}\label{sec2.3}
%%%%%%%%%%%%%%%%
%%%%%%%%%%%%%%%%

 We next report the time monotonicity of the cumulative distribution function when $b$ satisfies \eqref{pa3}, already observed in \cite{ErPa07}. Such a property obviously plays an important role in the control of the behaviour for large sizes and the proof of mass conservation.
	
%%%%%%%%%%%%%%%%
\begin{lemma}[\cite{ErPa07}]\label{lemb3} 
Let $T\in (0,\infty]$, $f^{in}\in X_0\cap X_1^+$, and consider a mass-conserving weak solution $f$ to \eqref{pa1} on $[0,T)$. If $b$ is of the form \eqref{pa3}, then 
\begin{equation*}
\int_x^\infty y f(t,y)\ dy \le \int_x^\infty y f^{in}(y)\ dy\,, \qquad (t,x)\in [0,T)\times (0,\infty)\,.
\end{equation*}
\end{lemma}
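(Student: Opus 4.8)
The plan is to test the weak formulation \eqref{pa5c} against a suitable truncated, non-increasing weight and exploit the absence of mass transfer encoded in \eqref{pa3}. Specifically, fix $x_0\in (0,\infty)$ and, for $A>x_0$, I would use the test function
\begin{equation*}
\psi_{x_0,A}(x) := (x - x_0)_+ \wedge (A-x_0) = \begin{cases} 0 & x\le x_0\,, \\ x-x_0 & x_0\le x\le A\,, \\ A-x_0 & x\ge A\,,\end{cases}
\end{equation*}
which belongs to $L^\infty(0,\infty)$, so \eqref{pa5c} applies. The point of subtracting $x_0$ and capping at $A$ is that, thanks to \eqref{pa3}, the associated weight $\zeta_{\psi_{x_0,A}}$ will turn out to be non-positive, so that $t\mapsto \int_0^\infty \psi_{x_0,A}(x) f(t,x)\,dx$ is non-increasing; letting $A\to\infty$ and using $M_1(f(t))=M_1(f^{in})<\infty$ (mass conservation, which is assumed) will convert this into the claimed tail bound for $\int_{x_0}^\infty y f(t,y)\,dy = \int_{x_0}^\infty \psi_{x_0,\infty}(y)f(t,y)\,dy + x_0\int_{x_0}^\infty f(t,y)\,dy$, after also controlling $\int_{x_0}^\infty f(t,y)\,dy$ — which is itself monotone by testing with $\mathbf 1_{(x_0,\infty)}$, or more simply one works directly with the weight $x\mapsto x\,\mathbf 1_{(x_0,\infty)}(x)\wedge(\text{cap})$ as in the proof of Proposition~\ref{propb2}.

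The heart of the argument is the sign computation. Using \eqref{pa3a}, for $(x,y)\in(0,\infty)^2$,
\begin{equation*}
\int_0^{x+y} \psi(z) b(z,x,y)\,dz = \int_0^x \psi(z)\bar b(z,x,y)\,dz + \int_0^y \psi(z)\bar b(z,y,x)\,dz\,,
\end{equation*}
so $\zeta_\psi(x,y)$ splits as $\big(\int_0^x \psi(z)\bar b(z,x,y)\,dz - \psi(x)\big) + \big(\int_0^y \psi(z)\bar b(z,y,x)\,dz - \psi(y)\big)$, and it suffices to show each bracket is $\le 0$. For the first bracket one uses that $\psi$ is the concave cap of a linear function: since $\psi(z)\le z$ for all $z\ge 0$ and $\psi$ is non-negative, and since $\bar b(z,x,y)$ is supported in $(0,x)$ with $\int_0^x z\bar b(z,x,y)\,dz = x$ by \eqref{pa3b}, one gets $\int_0^x \psi(z)\bar b(z,x,y)\,dz \le \int_0^x z\bar b(z,x,y)\,dz = x$. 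To compare this with $\psi(x)$ I would split into cases: if $x\le x_0$ then $\psi(x)=0\le \int_0^x\psi(z)\bar b(z,x,y)\,dz$ is the wrong sign, but in that case $\psi\equiv 0$ on $(0,x)$ so the integral also vanishes and the bracket is exactly $0$; if $x_0<x\le A$ then $\psi(x)=x-x_0$ and one needs $\int_0^x \psi(z)\bar b(z,x,y)\,dz\le x-x_0$, which follows from $\psi(z)\le z-x_0$ being \emph{false} for $z<x_0$ — so one instead writes $\psi(z) = (z-x_0)_+ \le z - x_0\mathbf 1_{(x_0,x)}(z)\cdot\frac{x_0}{x_0}$... this needs care. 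The clean way: $\psi(z) \le z\,\frac{\psi(x)}{x}$ for $z\in(0,x]$ whenever $\psi(x)/x$ is chosen right — actually $\psi$ subadditive and $1$-Lipschitz gives $\psi(z)\le \psi(x)$ pointwise is false too. The correct elementary fact is: for $z\in(0,x]$, $\psi(z)\le \frac{z}{x}\psi(x)$ because $s\mapsto \psi(s)/s$ is non-increasing on $(0,\infty)$ (as $\psi$ is concave with $\psi(0)=0$); hence $\int_0^x\psi(z)\bar b(z,x,y)\,dz\le \frac{\psi(x)}{x}\int_0^x z\bar b(z,x,y)\,dz = \psi(x)$ by \eqref{pa3b}, giving the first bracket $\le 0$. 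The same for the second bracket with $(x,y)$ swapped. Thus $\zeta_\psi\le 0$ everywhere.

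Once $\zeta_{\psi_{x_0,A}}\le 0$ is established, \eqref{pa5c} (or its differential form in Remark~\ref{rema1.5}) yields $\int_0^\infty \psi_{x_0,A}(x) f(t,x)\,dx \le \int_0^\infty \psi_{x_0,A}(x) f^{in}(x)\,dx$ for all $t\in[0,T)$. Since $0\le \psi_{x_0,A}(x)\nearrow (x-x_0)_+$ as $A\to\infty$ and $(x-x_0)_+\le x$ with $f(t),f^{in}\in X_1$, monotone convergence gives $\int_0^\infty (x-x_0)_+ f(t,x)\,dx \le \int_0^\infty (x-x_0)_+ f^{in}(x)\,dx$. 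Running the same argument with the weight $x\mapsto \mathbf 1_{(x_0,\infty)}(x)\wedge(\text{suitable cap})$ — whose $\zeta$ is non-positive by the identical concavity/cases reasoning, now using $\int_0^x\bar b(z,x,y)\,dz\le \frac1x\int_0^x z\bar b\,dz\cdot\sup = \ldots$, more directly: $\mathbf 1_{(x_0,x)}(z)\le z/x_0$ is not helpful, but $\int_0^x \mathbf 1_{(x_0,x)}(z)\bar b(z,x,y)\,dz \le \frac{1}{x_0}\int_0^x z\bar b(z,x,y)\,dz = x/x_0$, which we compare with $\mathbf 1_{(x_0,\infty)}(x)$; this gives monotonicity of $x\mapsto\int_0^\infty \mathbf 1_{(x_0,\infty)} f$ only after truncation — I would instead simply add $x_0$ times the number tail to the mass tail by noting $y\mathbf 1_{(x_0,\infty)}(y) = (y-x_0)_+ + x_0\mathbf 1_{(x_0,\infty)}(y)$ and handle both pieces. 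Combining, $\int_{x_0}^\infty y f(t,y)\,dy\le \int_{x_0}^\infty y f^{in}(y)\,dy$. The main obstacle I anticipate is the bookkeeping of the endpoint/cap cases in the sign computation — in particular making sure the truncation at $A$ does not destroy the sign (when $x+y>A$ the cap only makes $\zeta_\psi$ more negative, which is fine) and that the number-tail contribution is dealt with cleanly; the concavity inequality $\psi(z)/z$ non-increasing is the one genuinely essential input and I would state it as a one-line lemma.
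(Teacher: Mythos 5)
Your argument breaks at the step you yourself single out as essential: the sign of $\zeta_{\psi_{x_0,A}}$. First, the ``one-line lemma'' is false for your weight: $\psi_{x_0,A}$ is not concave (its slope jumps from $0$ to $1$ at $x_0$), and $s\mapsto\psi_{x_0,A}(s)/s$ is \emph{non-decreasing} on $(0,A]$, not non-increasing. By luck, non-decreasing is exactly what the bracket estimate needs, so $\psi(z)\le (z/x)\psi(x)$ for $z\le x$ and hence $\int_0^x\psi(z)\bar b(z,x,y)\,dz\le\psi(x)$ does hold — but only as long as $x\le A$. For $x>A$ it genuinely fails, and the bracket can be strictly positive: take $y\le x_0$ and a $\bar b(\cdot,x,y)$ compatible with \eqref{pa3b} (and with \eqref{pa4}) that concentrates its mass at sizes just above $A$; then $\int_0^x\psi(z)\bar b(z,x,y)\,dz\approx (x/A)(A-x_0)>A-x_0=\psi(x)$, so $\zeta_{\psi_{x_0,A}}(x,y)>0$. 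Thus your parenthetical claim that ``the cap only makes $\zeta_\psi$ more negative'' is exactly backwards, and the fixed-$A$ monotonicity $\int\psi_{x_0,A}f(t)\,dx\le\int\psi_{x_0,A}f^{in}\,dx$ does not follow from \eqref{pa3}; since the cap cannot be dispensed with (\eqref{pa5c} only admits bounded $\phi$), this is a real gap, not bookkeeping. A second gap sits in the final assembly: even granting the bound for the moment of $(y-x_0)_+$, your decomposition $y\mathbf 1_{(x_0,\infty)}(y)=(y-x_0)_+ + x_0\mathbf 1_{(x_0,\infty)}(y)$ forces you to control the number tail $\int_{x_0}^\infty f(t,y)\,dy$, which is \emph{not} non-increasing under breakage (a particle of size $x\gg x_0$ may produce several fragments larger than $x_0$), so no sign argument will close that piece.

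The way out — and the paper's route — is to test with the complementary weight $\phi_A(x)=x\mathbf 1_{(0,A)}(x)$ and aim for the opposite sign. Under \eqref{pa3}, a short case analysis gives $\zeta_{\phi_A}(x,y)\ge 0$ for \emph{all} $(x,y)$: truncation is harmless for this weight because fragments are never larger than their parent, so on $(0,A)^2$ nothing is lost ($\zeta_{\phi_A}=0$) and in the remaining cases only non-negative terms of the form $\int_0^A z\bar b\,dz$ survive. Hence $\int_0^A yf(t,y)\,dy\ge\int_0^A yf^{in}(y)\,dy$ for every $A>0$, and subtracting this from the conserved mass $M_1(f(t))=M_1(f^{in})$ yields precisely the claimed tail estimate. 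Mass conservation is used once, at the end, to convert the gain on $(0,A)$ into the loss on $(A,\infty)$ — which is the step your direct tail-weight approach was trying, unsuccessfully, to bypass.
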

%%%%%%%%%%%%%%%%

\begin{proof}
As in the proof of Proposition~\ref{propb2}, we set $\phi_A(x) := x \mathbf{1}_{(0,A)}(x)$ for $A>0$ and $x\in (0,\infty)$. Owing to  \eqref{pa3}:
\begin{itemize}
	\item if $(x,y)\in (0,A)^2$, then 
	\begin{align*}
	\zeta_{\phi_A}(x,y) & = \int_0^x z \bar{b}(z,x,y)\ dz + \int_0^y z \bar{b}(z,y,x)\ dz - x - y = 0\,;
	\end{align*}
	\item if $(x,y)\in (0,A)\times (A,\infty)$, then
	\begin{equation*}
	\zeta_{\phi_A}(x,y) = \int_0^x z \bar{b}(z,x,y)\ dz + \int_0^A z \bar{b}(z,y,x)\ dz - x = \int_0^A z \bar{b}(z,y,x)\ dz \ge 0\,;
	\end{equation*}
	\item if $(x,y)\in (A,\infty)\times (0,A)$, then
	\begin{equation*}
	\zeta_{\phi_A}(x,y) = \int_0^A z \bar{b}(z,x,y)\ dz + \int_0^y z \bar{b}(z,y,x)\ dz - y = \int_0^A z \bar{b}(z,x,y)\ dz \ge 0\,;	\end{equation*}
	\item if $(x,y)\in (A,\infty)^2$, then
	\begin{equation*}
	\zeta_{\phi_A}(x,y) = \int_0^A z \bar{b}(z,x,y)\ dz + \int_0^A z \bar{b}(z,y,x)\ dz \ge 0\,.
	\end{equation*}
\end{itemize}
Consequently, $\zeta_{\phi_A}(x,y) \ge 0$ for all $(x,y)\in (0,\infty)^2$ and, since both $K$ and $f$ are non-negative by \eqref{pa7} and Definition~\ref{defa1}, we infer from \eqref{pa5c} that
\begin{equation*}
\int_0^A x f(t,x)\ dx \ge \int_0^A x f^{in}(x)\ dx\,, \qquad t\in [0,T)\,.
\end{equation*}
Combining the above inequality with the conservation of mass completes the proof. 
\end{proof}

A similar monotonicity with respect to time actually holds true for superlinear moments.

%%%%%%%%%%%%%%%%
\begin{lemma}\label{lemb4} 	
Let $T\in (0,\infty]$, $f^{in}\in X_0\cap X_1^+$, and consider a mass-conserving weak solution $f$ to \eqref{pa1} on $[0,T)$. If $b$ is of the form \eqref{pa3} and $f^{in}\in X_m$ for some $m>1$, then $f(t)\in X_m$ and $M_m(f(t))\le M_m(f^{in})$ for all $t\in (0,T)$.
\end{lemma}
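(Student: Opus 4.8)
The plan is to mimic the argument of Lemma~\ref{lemb3}, replacing the linear weight by the weight $x^m$ with $m>1$, and to exploit the structure \eqref{pa3} of $b$ together with convexity. First I would introduce the truncated test function $\phi_{m,A}(x) := x^m \mathbf{1}_{(0,A)}(x)$ for $A>0$, which belongs to $L^\infty(0,\infty)$, and compute (or estimate from above) the associated coefficient $\zeta_{\phi_{m,A}}(x,y)$. The key algebraic fact is that, since $m>1$, the function $z\mapsto z^m$ is convex with $0^m=0$, so for $0<z<x$ one has $z^m \le z x^{m-1}$; combined with the mass-preservation identity \eqref{pa3b}, namely $\int_0^x z\bar b(z,x,y)\,dz = x$, this yields
\begin{equation*}
\int_0^x z^m \bar b(z,x,y)\ dz \le x^{m-1} \int_0^x z \bar b(z,x,y)\ dz = x^m\,,
\end{equation*}
and symmetrically $\int_0^y z^m \bar b(z,y,x)\,dz \le y^m$. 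Using the splitting \eqref{pa3a}, for $(x,y)\in(0,A)^2$ one gets $\int_0^{x+y} z^m b(z,x,y)\,dz = \int_0^x z^m\bar b(z,x,y)\,dz + \int_0^y z^m \bar b(z,y,x)\,dz \le x^m + y^m$, hence $\zeta_{\phi_{m,A}}(x,y) \le 0$ in this regime. In the remaining regimes (at least one of $x,y$ exceeds $A$) one discards the corresponding negative term $-x^m$ or $-y^m$ and bounds $\int_0^{A} z^m \bar b(z,\cdot,\cdot)\,dz \le \int_0^{x\wedge A}\!\!\cdots \le \min\{x,y\}^m \le x^m+y^m$ (up to a constant), so that in all cases $\zeta_{\phi_{m,A}}(x,y) \le C_m (x^m+y^m)$ for some constant $C_m$ depending only on $m$; more carefully, one should keep the sign information so that the only genuinely positive contributions come from pairs with $\max\{x,y\}>A$.

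Next I would insert $\phi_{m,A}$ into the weak formulation \eqref{pa5c} and, using that $K$ and $f$ are non-negative, split the triple integral according to whether $\max\{x,y\}>A$ or not. The part over $(0,A)^2$ is $\le 0$ by the above, giving
\begin{equation*}
\int_0^A x^m f(t,x)\ dx \le M_m(f^{in}) + R_A(t)\,,
\end{equation*}
where $R_A(t)$ collects the contributions of pairs with $\max\{x,y\}>A$. The point is that $R_A(t)$ is controlled, uniformly on compact time subintervals, by $\int_0^t \int\!\!\int_{\max\{x,y\}>A} (x^m+y^m) K(x,y) f(s,x)f(s,y)\,dy\,dx\,ds$ (up to a constant). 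Granting for the moment that this quantity is finite and tends to $0$ as $A\to\infty$, Fatou's lemma then gives $f(t)\in X_m$ with $M_m(f(t)) \le M_m(f^{in})$ for every $t\in(0,T)$, which is the claim.

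The main obstacle is exactly the justification that $R_A(t)\to 0$: a priori a weak solution is only known to lie in $L^\infty((0,T),X_1)$ and to satisfy \eqref{pa5b}, so the integrability of $(x^m+y^m)K(x,y) f f$ in $s,x,y$ is not given for free when $m>1$. I would resolve this by a bootstrap/truncation argument: work first with the doubly-truncated weight $\psi_{m,A}(x) := (x\wedge A)^m$, which is bounded, apply the weak formulation to it (now all integrals are finite since $\zeta_{\psi_{m,A}}$ is bounded and $Kff\in L^1$), obtain a bound on $\int_0^A x^m f(t,x)\,dx$ that is \emph{independent of $A$} on compact time intervals — this uses the superlinearity $m>1$ and \eqref{pa3b} to make the leading positive term nonpositive, so that only lower-order (at most linear in the moments $M_0,M_1$, or involving $M_{1+\beta}$ via $K$) and $A$-independent terms survive, together with a Gronwall argument in $t$. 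Once $\sup_{A>0}\sup_{s\le t}\int_0^A x^m f(s,x)\,dx<\infty$ is established, the desired integrability and the vanishing of $R_A(t)$ follow by dominated convergence, and passing to the limit $A\to\infty$ in the (by then non-positive) leading term yields the monotonicity $M_m(f(t))\le M_m(f^{in})$. Care is needed to check that the kernel growth $K(x,y) = x^\alpha y^\beta + x^\beta y^\alpha$ with $\beta\le 1$ does not spoil these estimates; since $\alpha\le\beta\le 1$, one has $K(x,y) \le x^\beta y^\beta + (\text{lower order for small sizes})$ — here the hypotheses $\alpha\ge 0$ or the already-established bounds of Theorem~\ref{thma2} on $X_m$-norms should be invoked as needed, and in the generality stated (only \eqref{pa3} plus $f^{in}\in X_m$) one may instead simply appeal to the a priori estimates derived in the proof of Theorem~\ref{thma2}(c) to close the argument.
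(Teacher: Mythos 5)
Your computation of $\zeta_{\phi_{m,A}}$ on $(0,A)^2$ is correct, but the resolution of the obstacle you yourself identify is where the argument breaks down. With the doubly truncated weight $\psi_{m,A}(x)=(x\wedge A)^m$ the coefficient $\zeta_{\psi_{m,A}}$ is \emph{not} nonpositive where $\max\{x,y\}>A$: for $x>A$ the corresponding part of the gain term is $\int_0^x (z\wedge A)^m\,\bar{b}(z,x,y)\,dz - A^m$, which is only bounded above by $A^{m-1}x-A^m$ (or by $(\beta_0-1)A^m$ via \eqref{pa4}), so a genuinely positive remainder survives. To run a Gronwall argument on it you must estimate $\int_0^\infty\int_0^\infty \psi_{m,A}(x)K(x,y)f(s,x)f(s,y)\,dydx$, and since $K(x,y)=x^\alpha y^\beta+x^\beta y^\alpha$ this brings in moments of order up to $m+\beta>m$, as well as $M_\alpha(f)$, which for $\alpha<0$ is not known to be finite (that is precisely the content of Lemma~\ref{leme1}); none of these is controlled by $\int \psi_{m,A}f$ together with $M_0$, $M_1$, so the estimate is neither $A$-independent nor closable, and the same moments would be needed in your final dominated-convergence step showing $R_A(t)\to 0$. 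Appealing instead to the bounds of Theorem~\ref{thma2}~(c) is circular: that statement is obtained from Lemma~\ref{lemc4}, which is itself an application of the present lemma, and the lemma must moreover apply to arbitrary mass-conserving weak solutions (it is used in Section~\ref{sec7} with $\alpha<0$), not only to those constructed in Theorem~\ref{thma2}. Even if the Gronwall step did close, it would produce an exponentially growing bound rather than the claimed monotonicity $M_m(f(t))\le M_m(f^{in})$.

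The missing idea is a different test function: $\phi(x)=\left(x^m-A^{m-1}x\right)\mathbf{1}_{(0,A)}(x)$, which is what the paper uses. Since $z\mapsto z^m-A^{m-1}z$ is nonpositive on $(0,A)$, the gain integrals in $\zeta_\phi$ are $\le 0$ whenever the matching loss term is absent (i.e.\ when $x>A$ or $y>A$), while on $(0,A)^2$ your convexity bound $z^m-A^{m-1}z\le (x^{m-1}-A^{m-1})z$ combined with \eqref{pa3b} gives exact cancellation; hence $\zeta_\phi\le 0$ in \emph{every} regime, there is no remainder at all, and \eqref{pa5c} applies directly because $\phi$ is bounded and $Kff\in L^1$ by \eqref{pa5b}. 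This yields $\int_0^A (x^m-A^{m-1}x)f(t,x)\,dx\le \int_0^A (x^m-A^{m-1}x)f^{in}(x)\,dx$, and adding $A^{m-1}M_1(f(t))=A^{m-1}M_1(f^{in})$ (mass conservation) converts this into $\int_0^\infty \min\{x^m,A^{m-1}x\}\,f(t,x)\,dx\le M_m(f^{in})$; letting $A\to\infty$ gives the claim. Note that this argument uses no upper bound on $K$ and no sign condition on $\alpha$, which is essential for the later applications.
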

%%%%%%%%%%%%%%%%

\begin{proof}
Let $A>0$ and set $\phi(x) = \left( x^m - A^{m-1} x \right) \mathbf{1}_{(0,A)}(x)$ for $x\in (0,\infty)$. By \eqref{pa3},
\begin{itemize}
	\item if $(x,y)\in (0,A)^2$, then
	\begin{align*}
	\zeta_{\phi}(x,y) & = \int_0^x \left( z^m - A^{m-1} z \right) \bar{b}(z,x,y)\ dz + \int_0^y \left( z^m - A^{m-1} z \right) \bar{b}(z,y,x)\ dz \\
	& \qquad - x^m + A^{m-1} x - y^m + A^{m-1} y \\
	& \le \left( x^{m-1} - A^{m-1} \right) \int_0^x z \bar{b}(z,x,y)\ dz + \left( y^{m-1} - A^{m-1} \right) \int_0^y z \bar{b}(z,y,x)\ dz \\
	& \qquad - x^m + A^{m-1} x - y^m + A^{m-1} y \\
	& = 0\,;
	\end{align*}
	\item if $(x,y)\in (0,A)\times (A,\infty)$, then
	\begin{align*}
	\zeta_{\phi}(x,y) & = \int_0^x \left( z^m - A^{m-1} z \right) \bar{b}(z,x,y)\ dz + \int_0^A \left( z^m - A^{m-1} z \right) \bar{b}(z,y,x)\ dz \\
	& \qquad - x^m + A^{m-1} x \\
	& \le \left( x^{m-1} - A^{m-1} \right) \int_0^x z \bar{b}(z,x,y)\ dz - x^m + A^{m-1} x \\
	& = 0\,;
	\end{align*}
	\item if $(x,y)\in (A,\infty)\times (0,A)$, then
	\begin{align*}
	\zeta_{\phi}(x,y) & = \int_0^A \left( z^m - A^{m-1} z \right) \bar{b}(z,x,y)\ dz + \int_0^y \left( z^m - A^{m-1} z \right) \bar{b}(z,y,x)\ dz \\
	& \qquad - y^m + A^{m-1} y \\
	& \le \left( y^{m-1} - A^{m-1} \right) \int_0^y z \bar{b}(z,y,x)\ dz - y^m + A^{m-1} y \\
	& = 0\,;
	\end{align*}
	\item if $(x,y)\in (A,\infty)^2$, then
	\begin{align*}
	\zeta_{\phi}(x,y) & = \int_0^A \left( z^m - A^{m-1} z \right) \bar{b}(z,x,y)\ dz + \int_0^A \left( z^m - A^{m-1} z \right) \bar{b}(z,y,x)\ dz \\
	& \le 0\,.
	\end{align*}
\end{itemize}
Consequently, taking into account that $\phi$ belongs to $L^\infty(0,\infty)$, it follows from \eqref{pa5c} that
\begin{equation*}
\int_0^A \left( x^m - A^{m-1} x\right) [f(t,x)-f^{in}(x)]\ dx \le 0\,, \qquad t\in (0,T) \,.
\end{equation*}
Combining the above inequality with the mass-conserving property of $f$ leads us to
\begin{equation*}
\int_0^\infty \min\{x^m, A^{m-1}x\} f(t,x)\ dx \le \int_0^\infty \min\{x^m, A^{m-1}x\} f^{in}(x)\ dx \le M_m(f^{in})\,, \qquad t\in (0,T)\,.
\end{equation*}
In particular, for all $A>0$,
\begin{equation*}
	\int_0^A x^m f(t,x)\ dx \le  M_m(f^{in})\,, \qquad t\in (0,T)\,.
\end{equation*}
Letting $A\to\infty$ in the previous inequality readily entails the claimed result.
\end{proof}

%%%%%%%%%%%%%%%%
%%%%%%%%%%%%%%%%
\section{Well-posedness by a fixed-point approach}\label{sec3}
%%%%%%%%%%%%%%%%
%%%%%%%%%%%%%%%%

The starting point of the compactness method to be designed in Section~\ref{sec4} is the well-posedness of \eqref{pa1} in $X_0\cap X_1^+$ for a suitable class of collision kernels which we describe now.

%%%%%%%%%%%%%%%%
\begin{proposition}\label{propc1}
Consider $f^{in}\in X_0\cap X_1^+$ and assume that $b$ satisfies \eqref{pa2} and \eqref{pa4} and that there is $k_0>0$ such that
\begin{equation}
K(x,y) \le k_0 \frac{xy}{x+y}\,, \qquad (x,y)\in (0,\infty)^2\,. \label{pb2}
\end{equation}
Then \eqref{pa1} has a unique strong solution $f\in C^1([0,\infty), X_0 \cap X_1^+)$ which satisfies
\begin{subequations}\label{pb3}
\begin{equation}
\frac{d}{dt} \int_0^\infty \phi(x) f(t,x)\ dx = \frac{1}{2} \int_0^\infty \int_0^\infty \zeta_\phi(x,y) K(x,y) f(t,x) f(t,y)\ dydx \label{pb3a}
\end{equation}
for all $\phi\in L^\infty(0,\infty)$ and $t\ge 0$ and
\begin{equation}
M_1(f(t)) = M_1(f^{in})\,, \qquad t\ge 0\,. \label{pb3b}
\end{equation}
In particular, it is also a mass-conserving weak solution to \eqref{pa1} on $[0,\infty)$ and, for $t\ge 0$, 
\begin{equation}
\frac{d}{dt} M_0(f(t)) = \frac{1}{2} \int_0^\infty \int_0^\infty \left[ \int_0^{x+y} b(z,x,y)\ dz - 2 \right] K(x,y) f(t,x) f(t,y)\ dydx\,. \label{pb3c}
\end{equation}
\end{subequations}
\end{proposition}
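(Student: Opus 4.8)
The plan is to recast \eqref{pa1} as an ordinary differential equation in the Banach space $\mathcal{X} := X_0 \cap X_1$, equipped with the norm $\|g\|_{\mathcal X} = \|g\|_{X_0} + \|g\|_{X_1}$, and solve it by a fixed point argument. The key consequence of the growth restriction \eqref{pb2} is that, since $xy/(x+y) \le \min\{x,y\}$, one has simultaneously $K(x,y) \le k_0 y$ and $K(x,y) \le k_0 x$; hence, for $g\in X_1^+$, the loss rate $a(g)(x) := \int_0^\infty K(x,y) g(y)\,dy$ satisfies $0 \le a(g)(x) \le k_0 M_1(g)$ for every $x$, so the loss term is, for fixed $g$, a bounded multiplication operator on $\mathcal X$ of norm at most $k_0\|g\|_{X_1}$. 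Writing $\mathcal{B}^+g(x) := \frac12 \int_x^\infty \int_0^y b(x,y-z,z) K(y-z,z) g(y-z) g(z)\,dz\,dy$ for the gain term, a Fubini rearrangement together with \eqref{pa2} and \eqref{pa4} bounds $\|\mathcal B^+ g\|_{X_0}$ by $\tfrac{k_0\beta_0}{2}\|g\|_{X_0}\|g\|_{X_1}$ and $\|\mathcal B^+g\|_{X_1}$ by $\tfrac{k_0}{2}\|g\|_{X_1}^2$. Consequently $\mathcal{B} = \mathcal{B}^+ - a(\cdot)(\cdot)$ maps $\mathcal X^+$ into $\mathcal X$ and is Lipschitz continuous on bounded subsets of $\mathcal X^+$, the Lipschitz constant being controlled by the $\mathcal X$-norm.

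To keep the solution in the positive cone, I would not invoke the Picard--Lindel\"of theorem directly but instead work with the Duhamel (mild) formulation
\[ f(t,x) = f^{in}(x)\, e^{-\int_0^t a(f(s))(x)\,ds} + \int_0^t e^{-\int_s^t a(f(\sigma))(x)\,d\sigma}\, \mathcal{B}^+ f(s)(x)\,ds, \]
and show, via Banach's fixed point theorem, that the associated map is a contraction on a closed ball of $C([0,\tau],\mathcal{X}^+)$ for $\tau>0$ small (depending only on $\|f^{in}\|_{\mathcal X}$). Since the exponential factors are positive and $\mathcal B^+ g\ge 0$ whenever $g\ge 0$, this map preserves the set of $\mathcal X^+$-valued continuous functions, so the fixed point is non-negative. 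A standard bootstrap then upgrades it to a strong solution $f\in C^1([0,T_{\max}),\mathcal{X}^+)$ of \eqref{pa1}, and the local Lipschitz property yields uniqueness.

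Global existence follows from a priori bounds. For the $X_1$-norm, note that \eqref{pb2} gives the sharp bound $(x+y)K(x,y)\le k_0 xy$, so the integrals defining $M_1(\mathcal B f(t))$ are absolutely convergent for $f(t)\in X_1$; Fubini's theorem combined with $\int_0^{x+y} z\,b(z,x,y)\,dz = x+y$ from \eqref{pa2} produces the exact cancellation $M_1(\mathcal{B}f(t)) = 0$, whence $M_1(f(t)) = M_1(f^{in})$ on $[0,T_{\max})$; equivalently, once $f$ is known to be a weak solution this is Proposition~\ref{propb2}, whose hypothesis \eqref{pb1} holds because $(x+y)K(x,y)f(x)f(y) \le k_0 xy f(x) f(y)$. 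For the $X_0$-norm, \eqref{pb3c} — which is just \eqref{pb3a} with $\phi\equiv 1$, hence $\zeta_1(x,y) = \int_0^{x+y} b(z,x,y)\,dz - 2$ — together with \eqref{pa4} gives $\frac{d}{dt} M_0(f(t)) \le \frac{k_0(\beta_0-2)}{2}\, M_1(f^{in})\, M_0(f(t))$, so $M_0(f(t)) \le M_0(f^{in})\, e^{Ct}$ with $C = k_0(\beta_0-2) M_1(f^{in})/2$. Thus $\|f(t)\|_{\mathcal X}$ stays finite on every bounded interval and $T_{\max}=\infty$. Finally, \eqref{pb3a} for a general $\phi\in L^\infty(0,\infty)$ is obtained from the strong form of \eqref{pa1} by multiplying by $\phi$, integrating over $(0,\infty)$, and rearranging by Fubini — licit since $Kff \in L^1((0,\infty)^2)$, again by \eqref{pb2} — which in particular shows that $f$ is a weak solution in the sense of Definition~\ref{defa1}.

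The two points requiring care are: (i) keeping the fixed-point scheme inside $\mathcal X^+$ without losing the contraction estimate, which the Duhamel formulation above is designed to handle; and (ii) the rigorous justification of the identity $M_1(\mathcal B f)=0$, i.e.\ that every Fubini interchange in the computation of the first moment of the gain and loss terms is permitted — and it is precisely here that \eqref{pb2} is used in the form $(x+y)K(x,y)\le k_0 xy$ to supply the integrability in $X_1$. Everything else is routine.
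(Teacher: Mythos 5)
Your proposal is correct, and its overall architecture is the one the paper uses: treat \eqref{pa1} as an ODE in $X_0\cap X_1$, exploit \eqref{pb2} (in the form $K(x,y)\le k_0\min\{x,y\}$ and $(x+y)K(x,y)\le k_0xy$) to get exactly the same local Lipschitz bounds on the gain and loss terms as in Lemma~\ref{lemc3}, run a fixed-point argument for local existence and uniqueness, and then extend globally via mass conservation (your direct cancellation $M_1(\mathcal{B}f)=0$, or equivalently Proposition~\ref{propb2}, whose hypothesis is supplied by \eqref{pb2}) together with the exponential bound on $M_0$ coming from \eqref{pb3c} and \eqref{pa4}; your constant $\beta_0-2$ is even marginally sharper than the paper's $\beta_0$. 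The genuine difference lies in how non-negativity is secured. The paper modifies the equation to $\partial_t f=[\mathcal{G}f]_+-\mathcal{D}f$, invokes the Picard--Lindel\"of theorem in $X_0\cap X_1$ (thereby getting the maximal interval and the blow-up alternative off the shelf), and then shows a posteriori that $\|[-f]_+(t)\|_{X_1}=0$ by a Gronwall argument, so the positive part can be dropped. You instead keep the original equation but pass to the Duhamel (mild) formulation, using the loss rate $a(f)(x)=\int_0^\infty K(x,y)f(y)\,dy\le k_0M_1(f)$ -- uniformly bounded in $x$ thanks to \eqref{pb2} -- as an integrating factor, so that the fixed-point map manifestly preserves the cone $\mathcal{X}^+$. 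Both devices are standard: the paper's buys a ready-made abstract ODE theorem at the cost of the truncation-plus-Gronwall positivity step, while yours gets positivity for free from the structure of the map at the cost of redoing the local existence bookkeeping by hand (invariance of the ball, the Lipschitz estimate on the exponential factors via $|e^{-A}-e^{-B}|\le|A-B|$, the continuation criterion with $\tau$ depending only on $\|f^{in}\|_{X_0\cap X_1}$), all of which your sketch correctly identifies and which indeed goes through.
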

%%%%%%%%%%%%%%%%

The proof of Proposition~\ref{propc1} relies on Banach's fixed-point theorem and requires suitable Lipschitz properties of the nonlinear fragmentation operator $\mathcal{B}$ defined in \eqref{pa1c}. To proceed further, we split $\mathcal{B}=\mathcal{G}-\mathcal{D}$, with gain term
\begin{equation*}
\mathcal{G}f(x) := \frac{1}{2} \int_x^\infty \int_0^y b(x,y-z,z) K(y-z,z) f(y-z) f(z)\ dzdy \,, \qquad x\in (0,\infty)\,,
\end{equation*}
and loss term 
\begin{equation*}
\mathcal{D}f(x) := \int_0^\infty K(x,y) f(x) f(y)\ dy\,, \qquad x\in (0,\infty)\,.
\end{equation*}

We begin with a useful identity involving $\mathcal{G}$.

%%%%%%%%%%%%%%%%
\begin{lemma}\label{lemc2}
Assume that $b$ and $K$ satisfy \eqref{pa2}, \eqref{pa4}, and \eqref{pb2}, respectively. If $\phi\in L^\infty(0,\infty)$ and $f\in X_0\cap X_1^+$, then $\phi \mathcal{G}(f)$ belongs to $X_0$ and
\begin{equation}
\int_0^\infty \phi(x) \mathcal{G}f(x)\ dx = \frac{1}{2} \int_0^\infty \int_0^\infty \int_0^{x+y} \phi(z) b(z,x,y) K(x,y) f(x) f(y)\ dzdydx\,. \label{pb4}
\end{equation}
\end{lemma}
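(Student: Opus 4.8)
The plan is to prove the identity \eqref{pb4} by a direct change of variables in the triple integral defining $\mathcal{G}f$, after first justifying that everything is integrable. The statement claims both that $\phi\mathcal{G}f \in X_0$ and that \eqref{pb4} holds, and it is natural to establish integrability first (so that the subsequent Fubini--Tonelli manipulations are legitimate) and then perform the substitution.

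\textbf{Step 1: integrability.} Since $\phi \in L^\infty(0,\infty)$, it suffices to bound $\mathcal{G}f$ in $X_0$, i.e. to estimate $\int_0^\infty |\mathcal{G}f(x)|\,dx$. Because $b$, $K$, and $f$ are all non-negative, $\mathcal{G}f \ge 0$ and we may work with Tonelli's theorem throughout. The key tool is the bound \eqref{pb2}, namely $K(x,y) \le k_0 xy/(x+y) \le k_0 \min\{x,y\} \le k_0 \sqrt{xy}$; in fact the cleanest estimate uses $K(y-z,z) \le k_0 z$ for the inner part. Integrating $\mathcal{G}f(x)$ over $x \in (0,\infty)$, swapping the order of the $x$- and $y$-integrations (the region is $\{0 < x < y,\ 0 < z < y\}$), and using $\int_0^y b(x,y-z,z)\,dx \le \beta_0$ from \eqref{pa4}, one gets
\begin{equation*}
\int_0^\infty \mathcal{G}f(x)\,dx \le \frac{\beta_0}{2} \int_0^\infty \int_0^y K(y-z,z) f(y-z) f(z)\,dz\,dy\,.
\end{equation*}
Then the substitution $u = y - z$ (with $y$ replaced by $u + z$, $u > 0$, $z > 0$) together with $K(u,z) \le k_0 \min\{u,z\} \le k_0 z$ (or the symmetrized version $k_0 u^{1/2} z^{1/2} \le k_0(u+z)/2$) yields a bound by a constant times $\|f\|_{X_0}\|f\|_{X_1}$, which is finite since $f \in X_0 \cap X_1^+$. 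Hence $\mathcal{G}f \in X_0$ and $\phi\mathcal{G}f \in X_0$.

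\textbf{Step 2: the change of variables.} With integrability in hand, write
\begin{equation*}
\int_0^\infty \phi(x)\mathcal{G}f(x)\,dx = \frac{1}{2}\int_0^\infty \phi(x) \int_x^\infty \int_0^y b(x,y-z,z) K(y-z,z) f(y-z) f(z)\,dz\,dy\,dx\,,
\end{equation*}
and apply Fubini--Tonelli to integrate in the order $dz\,dy\,dx \to$ first fixing the inner $(y,z)$ structure. Introduce the new variable $u = y - z$ in place of $y$, so that the constraint $z \in (0,y)$ and $y > x$ become $u > 0$, $z > 0$, and $u + z > x$; the Jacobian is $1$. This transforms the integral into
\begin{equation*}
\frac{1}{2}\int_0^\infty \phi(x) \int_0^\infty \int_0^\infty \mathbf{1}_{\{u+z>x\}} b(x,u,z) K(u,z) f(u) f(z)\,dz\,du\,dx\,.
\end{equation*}
Now recall from \eqref{pa2} that $b(x,u,z) = 0$ for $x > u + z$, so the indicator $\mathbf{1}_{\{u+z>x\}}$ may be dropped (it is automatically enforced by the support of $b$). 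Relabelling $(u,z) \to (x,y)$ in the outer pair and the dummy $x \to z$, and swapping the order of integration once more to bring out $\int_0^{x+y}\phi(z) b(z,x,y)\,dz$ as the innermost integral, produces exactly the right-hand side of \eqref{pb4}. All reorderings are justified because the integrand is absolutely integrable: $|\phi| \le \|\phi\|_\infty$ and the remaining non-negative triple integral is finite by the estimate of Step~1 (now with $|\phi|$ in place of $\phi$).

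\textbf{Main obstacle.} The computation is essentially bookkeeping, so the only real point requiring care is making sure every application of Fubini--Tonelli is legitimate — that is, carrying the finiteness estimate of Step~1 through each reordering, and correctly tracking how the domain constraints $\{x < y\}$ and $\{0 < z < y\}$ transform under $u = y-z$ and interact with the support condition $b(z,x,y) = 0$ for $z > x+y$ from \eqref{pa2}. I would be careful to do the a priori $L^1$ bound with $|\phi|$ inserted from the start, so that the interchange of integrals in Step~2 is covered by the classical Fubini theorem rather than needing a separate Tonelli argument at each stage.
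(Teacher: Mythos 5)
Your proposal is correct and follows essentially the same route as the paper: an a priori absolute-integrability bound of order $\beta_0 k_0\|\phi\|_{L^\infty}\|f\|_{X_0}\|f\|_{X_1}$ obtained from \eqref{pa4} and \eqref{pb2}, followed by Fubini and the substitution $u=y-z$ (the paper performs the inverse substitution $y_*=y+z$, going from the right-hand side of \eqref{pb4} to the left, but the computation is the same).
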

%%%%%%%%%%%%%%%%

\begin{proof}
We first infer from \eqref{pa4} and \eqref{pb2} that
\begin{align*}
& \int_0^\infty \int_0^\infty \int_0^{x+y} |\phi(z)| b(z,x,y) K(x,y) f(x) f(y)\ dzdydx \\
& \hspace{1cm} \le \|\phi\|_{L^\infty(0,\infty)} \int_0^\infty \int_0^\infty \int_0^{x+y} b(z,x,y) K(x,y) f(x) f(y)\ dzdydx \\
& \hspace{1cm} \le \beta_0 k_0 \|\phi\|_{L^\infty(0,\infty)} \int_0^\infty \int_0^\infty \frac{xy}{x+y} f(x) f(y)\ dydx \\
& \hspace{1cm} \le \beta_0 k_0 \|\phi\|_{L^\infty(0,\infty)} \int_0^\infty \int_0^\infty x f(x) f(y)\ dydx \\
& \hspace{1cm} = \beta_0 k_0 \|\phi\|_{L^\infty(0,\infty)} \|f\|_{X_0} \|f\|_{X_1}\,,
\end{align*}
so that
\begin{equation*}
(x,y,z) \longmapsto \phi(z) b(z,x,y) \mathbf{1}_{(0,x+y)}(z) K(x,y) f(x) f(y) \in L^1((0,\infty)^3)\,.
\end{equation*}
It then follows from Fubini's theorem that
\begin{align*}
& \frac{1}{2} \int_0^\infty \int_0^\infty \int_0^{y+z} \phi(x) b(x,y,z) K(y,z) f(y) f(z)\ dxdydz \\
& \hspace{1cm} = \frac{1}{2} \int_0^\infty \int_z^\infty \int_0^{y_*} \phi(x) b(x,y_*-z,z) K(y_*-z,z) f(y_*-z) f(z)\ dxdy_*dz \\
& \hspace{1cm} = \frac{1}{2} \int_0^\infty \int_0^y \int_0^y \phi(x) b(x,y-z,z) K(y-z,z) f(y-z) f(z)\ dxdzdy \\
& \hspace{1cm} = \frac{1}{2} \int_0^\infty \int_x^\infty \int_0^y \phi(x) b(x,y-z,z) K(y-z,z) f(y-z) f(z)\ dzdydx \\
& \hspace{1cm} = \int_0^\infty \phi(x) \mathcal{G}f(x)\ dx\,,
\end{align*}
which proves \eqref{pb4}. 
\end{proof}

We next report Lipschitz properties of $\mathcal{G}$ and $\mathcal{D}$.

%%%%%%%%%%%%%%%%
\begin{lemma}\label{lemc3}
Assume that $b$ and $K$ satisfy \eqref{pa2}, \eqref{pa4}, and \eqref{pb2}, respectively. Then $\mathcal{G}$ and $\mathcal{D}$ are locally Lipschitz continuous on $X_0\cap X_1$.
\end{lemma}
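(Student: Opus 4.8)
The plan is to establish the local Lipschitz continuity of $\mathcal{D}$ and $\mathcal{G}$ separately on the set $X_0 \cap X_1$, equipped with its natural norm $\|h\|_{X_0} + \|h\|_{X_1}$. Throughout, I will fix $R>0$ and restrict to $f,g \in X_0\cap X_1^+$ with $\|f\|_{X_0}+\|f\|_{X_1}\le R$ and $\|g\|_{X_0}+\|g\|_{X_1}\le R$; the constants produced will depend only on $R$, $k_0$, and $\beta_0$. The key analytic input is the bound \eqref{pb2}, which gives $K(x,y)\le k_0 xy/(x+y) \le k_0\min\{x,y\}$, so that the bilinear form $(x,y)\mapsto K(x,y)h_1(x)h_2(y)$ is controlled by $k_0\|h_1\|_{X_1}\|h_2\|_{X_0}$ (and, symmetrically, by $k_0\|h_1\|_{X_0}\|h_2\|_{X_1}$).

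For the loss term $\mathcal{D}$, I would first write the elementary bilinear-difference identity
\begin{equation*}
\mathcal{D}f(x) - \mathcal{D}g(x) = \int_0^\infty K(x,y)\bigl[(f(x)-g(x)) f(y) + g(x)(f(y)-g(y))\bigr]\ dy\,,
\end{equation*}
and then estimate $\|\mathcal{D}f - \mathcal{D}g\|_{X_0}$ and $\|\mathcal{D}f-\mathcal{D}g\|_{X_1}$ by integrating against $1$ and against $x$, respectively. Using $K(x,y)\le k_0\min\{x,y\}$, each of the resulting double integrals is bounded by a product of an $X_0$-norm and an $X_1$-norm of $f-g$, $f$, or $g$; for the $X_1$-estimate one uses the extra factor $x$ together with $K(x,y)\le k_0 y$ to keep the $x$-weight on the difference and the $X_0$-weight on the other factor (or vice versa), so no $X_2$-norm appears. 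This gives $\|\mathcal{D}f - \mathcal{D}g\|_{X_0\cap X_1} \le C(R)\,\|f-g\|_{X_0\cap X_1}$.

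For the gain term $\mathcal{G}$, the cleanest route is to avoid manipulating $\mathcal{G}f(x)$ pointwise and instead use the weak identity \eqref{pb4} of Lemma~\ref{lemc2}: for $\phi\in L^\infty(0,\infty)$,
\begin{equation*}
\int_0^\infty \phi(x)\bigl(\mathcal{G}f(x) - \mathcal{G}g(x)\bigr)\ dx = \frac{1}{2}\int_0^\infty\int_0^\infty \Bigl(\int_0^{x+y}\phi(z)b(z,x,y)\ dz\Bigr) K(x,y)\bigl[f(x)f(y)-g(x)g(y)\bigr]\ dydx\,.
\end{equation*}
Since $\bigl|\int_0^{x+y}\phi(z)b(z,x,y)\,dz\bigr| \le \beta_0\|\phi\|_\infty$ by \eqref{pa4}, splitting the bilinear difference $f(x)f(y)-g(x)g(y)$ as above and using $K(x,y)\le k_0\min\{x,y\}$ bounds the right-hand side by $C(R)\|\phi\|_\infty \|f-g\|_{X_0\cap X_1}$; taking the supremum over $\|\phi\|_\infty\le 1$ controls $\|\mathcal{G}f-\mathcal{G}g\|_{X_0}$ by duality ($L^\infty=(L^1)^*$). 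For the $X_1$-component one repeats the argument with the admissible choice $\phi_A(x)=x\,\mathbf{1}_{(0,A)}(x)$ (for which $\|\phi_A\|_\infty = A$ is finite), exploiting that $\int_0^{x+y}\phi_A(z)b(z,x,y)\,dz \le \int_0^{x+y} z\,b(z,x,y)\,dz = x+y$ by \eqref{pa2}, which is uniform in $A$; this yields $\int_0^A x\,|\mathcal{G}f(x)-\mathcal{G}g(x)|\,dx \le C(R)\|f-g\|_{X_0\cap X_1}$, and letting $A\to\infty$ gives the $X_1$-bound. Combining the $X_0$- and $X_1$-estimates completes the proof.

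The main obstacle is purely bookkeeping: one must be careful, in every $X_1$-type estimate, to distribute the weight $x$ and the kernel bound $K(x,y)\le k_0\min\{x,y\}$ so that one factor carries the $X_1$-norm and the other only the $X_0$-norm, never producing an uncontrolled $X_2$-moment. The symmetry $K(x,y)=K(y,x)$ and the simple inequality $xy/(x+y)\le\min\{x,y\}$ are exactly what make this possible, so there is no genuine difficulty — only the need to organize the splitting of the bilinear differences and the choice of test function $\phi_A$ for the gain term with some care.
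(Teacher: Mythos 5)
Your estimates are essentially the same as the paper's: for $\mathcal{D}$ the bilinear splitting plus $K(x,y)\le k_0\min\{x,y\}$ (equivalently the symmetrization $(x+y)K(x,y)\le k_0xy$) is exactly what the paper does, and for $\mathcal{G}$ your route through the weak identity \eqref{pb4} and $L^1$--$L^\infty$ duality is only a repackaging of the paper's argument, which performs the same Fubini--Tonelli computation directly on $\|\mathcal{G}f-\mathcal{G}g\|_{X_0}$ and $\|\mathcal{G}f-\mathcal{G}g\|_{X_1}$, using \eqref{pa4} to bound $\int_0^{x+y}b\,dz$ for the $X_0$-estimate and \eqref{pa2} to bound $\int_0^{x+y}z\,b\,dz=x+y$ for the $X_1$-estimate; the extra duality layer buys nothing but costs a small technical detail, namely that to reach $\int_0^A x\,|\mathcal{G}f(x)-\mathcal{G}g(x)|\,dx$ you must test against $\phi(x)=x\,\mathbf{1}_{(0,A)}(x)\,\mathrm{sign}(\mathcal{G}f(x)-\mathcal{G}g(x))$ rather than $\phi_A$ itself, which is harmless since the bound $\int_0^{x+y}|\phi(z)|\,b(z,x,y)\,dz\le x+y$ from \eqref{pa2} is unaffected.

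The one point you should repair is your restriction to the positive cone: you fix $f,g\in X_0\cap X_1^+$, whereas the lemma asserts local Lipschitz continuity on all of $X_0\cap X_1$, and this is what the Picard--Lindel\"of argument in the proof of Proposition~\ref{propc1} actually needs, since a norm-neighborhood of the (non-negative) initial condition in $X_0\cap X_1$ contains sign-changing functions and non-negativity of the solution is only established \emph{after} local existence. Fortunately none of your estimates uses positivity in an essential way: the bilinear differences are handled with absolute values, and the Fubini justification behind \eqref{pb4} extends verbatim to signed $f$ (only $\|f\|_{X_0}$ and $\|f\|_{X_1}$ enter the integrability bound), so the proof goes through on the whole space once you drop the superscript $+$ and replace $f$, $g$ by $|f|$, $|g|$ in the bounds; you should also note that Lemma~\ref{lemc2} as stated concerns non-negative $f$, so either cite this extension explicitly or, as the paper does, bypass \eqref{pb4} and estimate the triple integrals directly.
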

%%%%%%%%%%%%%%%%

\begin{proof}
Let $(f,g)\in (X_0\cap X_1)^2$. As in the proof of Lemma~\ref{lemc2}, it follows from \eqref{pa2}, \eqref{pb2}, and Fubini-Tonelli's theorem that
\begin{align*}
& \|\mathcal{G}f - \mathcal{G}g\|_{X_1} \\
& \qquad  \le \frac{1}{2} \int_0^\infty \int_x^\infty \int_0^y x b(x,y-z,z) K(y-z,z) |f(y-z) f(z) - g(y-z) g(z)|\ dzdydx \\
& \qquad =  \frac{1}{2} \int_0^\infty \int_0^\infty \int_0^{y+z} x b(x,y,z) K(y,z) |f(y) f(z) - g(y) g(z)|\ dxdzdy \\
& \qquad \le  \frac{1}{2} \int_0^\infty \int_0^\infty (y+z) K(y,z) \left(|f(y)| |(f-g)(z)| + |g(z)| |(f-g)(y)| \right)\ dzdy \\
& \qquad \le \frac{k_0}{2} \int_0^\infty \int_0^\infty yz \left(|f(y)| |(f-g)(z)| + |g(z)| |(f-g)(y)| \right)\ dzdy \\
& \qquad = \frac{k_0}{2} \left( \|f\|_{X_1} + \|g\|_{X_1} \right) \|f-g\|_{X_1}\,.
\end{align*}
Similarly, by \eqref{pb2} and the symmetry of $K$,
\begin{align*}
\|\mathcal{D}f - \mathcal{D}g\|_{X_1} & \le \int_0^\infty \int_0^\infty y K(y,z) |f(y) f(z) - g(y) g(z)|\ dzdy \\
& = \frac{1}{2} \int_0^\infty \int_0^\infty (y+z) K(y,z) |f(y) f(z) - g(y) g(z)|\ dzdy \\
& \le \frac{k_0}{2} \int_0^\infty \int_0^\infty yz |f(y) f(z) - g(y) g(z)|\ dzdy \\
& \le \frac{k_0}{2} \left( \|f\|_{X_1} + \|g\|_{X_1} \right) \|f-g\|_{X_1}\,.
\end{align*}
Consequently, $\mathcal{G}$ and $\mathcal{D}$ are locally Lipschitz continuous on $X_1$.

We next deduce from \eqref{pa4}, \eqref{pb2}, and Fubini-Tonelli's theorem that
\begin{align*}
& \|\mathcal{G}f - \mathcal{G}g\|_{X_0} \\
& \qquad \le  \frac{1}{2} \int_0^\infty \int_0^\infty \int_0^{y+z} b(x,y,z) K(y,z) |f(y) f(z) - g(y) g(z)|\ dxdzdy \\
& \qquad \le  \frac{\beta_0}{2} \int_0^\infty \int_0^\infty K(y,z) \left(|f(y)| |(f-g)(z)| + |g(z)| |(f-g)(y)| \right)\ dzdy \\
& \qquad \le \frac{k_0 \beta_0}{2} \int_0^\infty \int_0^\infty \left( y |f(y)| |(f-g)(z)| + z |g(z)| |(f-g)(y)| \right)\ dzdy \\
& \qquad = \frac{k_0 \beta_0}{2} \left( \|f\|_{X_1} + \|g\|_{X_1} \right) \|f-g\|_{X_0}\,.
\end{align*}
Similarly, by \eqref{pb2},
\begin{align*}
\|\mathcal{D}f - \mathcal{D}g\|_{X_0} & \le \int_0^\infty \int_0^\infty K(y,z) \left( |f(y)| |f(z) - g(z)| + |g(z)| |f(y) - g(y)| \right)\ dzdy \\
& \le k_0 \int_0^\infty \int_0^\infty \left( y |f(y)| |f(z) - g(z)| + z |g(z)| |f(y) - g(y)| \right)\ dzdy \\
& = k_0 \left( \|f\|_{X_1} + \|g\|_{X_1} \right) \|f-g\|_{X_0}\,.
\end{align*}
We have thus proved that $\mathcal{G}$ and $\mathcal{D}$ are locally Lipschitz continuous on $X_0$.
\end{proof}

We are now in a position to prove Proposition~\ref{propc1}.

\begin{proof}[Proof of Proposition~\ref{propc1}]
Let $f^{in}\in X_0\cap X_1^+$ and consider the initial value problem 
\begin{subequations}\label{pb5}
\begin{align}
\partial_t f & = \left[ \mathcal{G}f \right]_+ - \mathcal{D}f\,, \qquad t>0\,, \label{pb5a} \\
f(0) & = f^{in}\,, \label{pb5b}
\end{align}
\end{subequations}
where $\left[ \cdot\right]_+$ denotes the positive part. Since both $\mathcal{G}$ and $\mathcal{D}$ are locally Lipschitz continuous on $X_0\cap X_1$ by Lemma~\ref{lemc3} and taking the positive part is a contraction on $\mathbb{R}$, we infer from the Picard-Lindel\"of theorem, see, e.g., \cite[Theorems~7.6 \&~9.2]{Aman90}, that there is a unique solution $f\in C^1([0,T_{max}),X_0\cap X_1)$ to \eqref{pb5} defined on a maximal existence interval $[0,T_{max})$ with $T_{max}\in (0,\infty]$. Moreover, since $\mathcal{G}$ and $\mathcal{D}$ map bounded sets of $X_0\cap X_1$ into bounded sets of $X_0\cap X_1$, the following alternative holds true: either $T_{max}=\infty$, or
\begin{equation}
T_{max} < \infty \;\;\text{ and }\;\; \lim_{t\to T_{max}} \left( \|f(t)\|_{X_0} + \|f(t)\|_{X_1} \right)= \infty\,. \label{pb6}
\end{equation} 
see, e.g., \cite[Remark~7.10~(b)]{Aman90}. 

It first follows from \eqref{pb5a} that, for $t\in (0,T_{max})$,
\begin{align*}
\partial_t [-f]_+(t,x)  & = - \mathrm{sign}_+ (-f(t,x)) \left( \left[ \mathcal{G}f(t,x) \right]_+ - \mathcal{D}f(t,x) \right) \\
& \le - \left( \int_0^\infty K(x,y) f(t,y)\ dy \right) [-f]_+(t,x)\,.
\end{align*}
Therefore, thanks to \eqref{pb2}, 
\begin{align*}
\frac{d}{dt} \|[-f]_+(t)\|_{X_1} & \le \int_0^\infty \int_0^\infty x K(x,y) |f(t,y)| [-f]_+(t,x) \ dydx \\
& \le k_0\int_0^\infty \int_0^\infty \frac{x}{x+y} xy |f(t,y)| [-f]_+(t,x) \ dydx \\
& \le k_0 \|f(t)\|_{X_1} \|[-f]_+(t) \|_{X_1}\,.
\end{align*}
Hence, 
\begin{equation*}
\|[-f]_+(t)\|_{X_1} \le \|[-f^{in}]_+\|_{X_1} \exp\left\{ k_0 \int_0^t \|f(s)\|_{X_1}\ ds \right\} = 0\,, \qquad t\in [0,T_{max})\,;
\end{equation*}
that is, $f(t)\in X_1^+$ for all $t\in [0,T_{max})$. This non-negativity property entails that $\left[ \mathcal{G}f\right]_+ = \mathcal{G}f$ and it follows from \eqref{pb5} that $f\in C^1([0,T_{max}),X_0\cap X_1)$ solves
\begin{subequations}\label{pb7}
\begin{align}
\partial_t f & = \mathcal{G}f - \mathcal{D}f = \mathcal{B}f\,, \qquad t>0\,, \label{pb7a} \\
f(0) & = f^{in}\,. \label{pb7b}
\end{align}
\end{subequations}
Since 
\begin{align*}
\int_0^\infty \int_0^\infty K(x,y) f(s,x) f(s,y)\ dydx & \le k_0 \|f(s)\|_{X_0} \|f(s)\|_{X_1} \\ 
& \le k_0 \sup_{s\in [0,t]}\{ \|f(s)\|_{X_0} \|f(s)\|_{X_1}\} 
\end{align*}
for $s\in (0,t)$ and $t\in (0,T_{max})$, it readily follows from \eqref{pb4} and \eqref{pb7} that $f$ satisfies \eqref{pb3a} and is a weak solution to \eqref{pa1} on $[0,T_{max})$. In addition, we infer from \eqref{pb2} that, for $t\in (0,T_{max})$ and $s\in (0,t)$, 
\begin{equation*}
\int_0^\infty \int_0^\infty (x+y) K(x,y) f(s,x) f(s,y)\ dydx \le k_0 \|f(s)\|_{X_1}^2 \le k_0 \sup_{s\in [0,t]}\{ \|f(s)\|_{X_1}^2\} \,, 
\end{equation*}
so that, according to Proposition~\ref{propb2}, $f$ is a mass-conserving weak solution to \eqref{pa1} on $[0,T_{max})$. Therefore, 
\begin{equation}
\|f(t)\|_{X_1} = M_1(f(t)) = M_1(f^{in}) = \|f^{in}\|_{X_1}\,, \qquad t\in [0,T_{max})\,. \label{pb8}
\end{equation}
Also, for $t\in [0,T_{max})$, we deduce from \eqref{pb3a} with $\phi\equiv 1$ that
\begin{equation*}
\frac{d}{dt} M_0(f(t)) = \frac{1}{2} \int_0^\infty \int_0^\infty \left[ \int_0^{x+y} b(z,x,y)\ dz - 2 \right] K(x,y) f(t,x) f(t,y)\ dydx\,.
\end{equation*}
Owing to \eqref{pa4}, \eqref{pb2}, and \eqref{pb8}, we further obtain
\begin{align*}
\frac{d}{dt} M_0(f(t)) & \le \frac{\beta_0}{2} \int_0^\infty \int_0^\infty K(x,y) f(t,x) f(t,y)\ dydx \\
& \le \frac{k_0 \beta_0}{2} \int_0^\infty \int_0^\infty \frac{xy}{x+y} f(t,x) f(t,y)\ dydx \\
& \le \frac{k_0 \beta_0}{2} M_1(f(t)) M_0(f(t)) = \frac{k_0 \beta_0}{2} M_1(f^{in}) M_0(f(t))\,.
\end{align*}
Consequently,
\begin{equation*}
\|f(t)\|_{X_0} = M_0(f(t)) \le M_0(f^{in}) e^{k_0\beta_0 t/2}\,, \qquad t\in [0,T_{max})\,,
\end{equation*}
which, together with \eqref{pb8}, rules out \eqref{pb6} and implies $T_{max}=\infty$. 
\end{proof}

%%%%%%%%%%%%%%%%
%%%%%%%%%%%%%%%%
\section{Existence by a compactness approach}\label{sec4}
%%%%%%%%%%%%%%%%
%%%%%%%%%%%%%%%%

In this section, we assume that the daughter distribution function $b$ satisfies \eqref{pa2}, \eqref{pa3}, \eqref{pa4}, and \eqref{pa8}, while $K$ is given by \eqref{pa7} with \begin{equation}
\alpha\ge 0\,. \label{pc1}
\end{equation}
Also, let $f^{in}\in X_0 \cap X_1^+$ with $\varrho:= M_1(f^{in})>0$. Since $f^{in}\in X_0$, a variant of the de la Vall\'ee Poussin theorem, see \cite[Theorem~7.1.6]{BLL19}, ensures that there is a function $\Phi\in C^1([0,\infty))$ endowed with the following properties: $\Phi$ is convex, $\Phi(0)=\Phi'(0)=0$, $\Phi'$ is concave and positive on $(0,\infty)$, 
\begin{equation}
\mathcal{I} := \int_0^\infty \Phi(f^{in}(x))\ dx < \infty\,, \label{pc2}
\end{equation}
\begin{equation}
\lim_{r\to\infty} \Phi'(r) = \lim_{r\to\infty} \frac{\Phi(r)}{r} = \infty\,, \label{pc3}
\end{equation}
and, for all $\mu\in (1,2]$,
\begin{equation}
\lim_{r\to\infty} \frac{\Phi'(r)}{r^{\mu-1}} = \lim_{r\to\infty} \frac{\Phi(r)}{r^\mu} = 0\,. \label{pc4}
\end{equation}
Another variant of the de la Vall\'ee-Poussin theorem, recalled in Lemma~\ref{leap1}, and the integrability of $f^{in}$ guarantee that there is a non-negative convex and non-increasing function $\Phi_0\in C^1((0,\infty))$ such that
\begin{equation}
M_{\Phi_0}(f^{in}) = \int_0^\infty \Phi_0(x) f^{in}(x)\ dx < \infty \label{pc4.1}
\end{equation}
and
\begin{equation}
\lim_{x\to 0} \Phi_0(x) = \infty\,, \qquad \lim_{x\to 0} x^{(p-1)/2p} \Phi_0(x) = 0\,, \qquad x \mapsto x^{(p-1)/2p} \Phi_0(x) \;\text{ is non-decreasing}\,, \label{pc4.2}
\end{equation}
the parameter $p\in (1,2)$ being defined in \eqref{pa8}.

We now introduce a suitably designed approximation of \eqref{pa1}, to which we may apply the analysis performed in the previous section. Specifically, let $n\ge 1$ be an integer and define
\begin{equation}
K_n(x,y) := K(x,y) \mathbf{1}_{(1/n,n)}(x) \mathbf{1}_{(1/n,n)}(y)\,, \qquad (x,y)\in (0,\infty)^2\,, \label{pc5} 
\end{equation}
and
\begin{equation}
f_n^{in} := f^{in} \mathbf{1}_{(0,2n)}\,. \label{pc6}
\end{equation}
It follows from \eqref{pa7} and \eqref{pc1} that $xy/(x+y)\ge 2/n$ and $K(x,y)\le 2 n^\lambda$ for $(x,y)\in (1/n,n)^2$. Thus,
\begin{equation*}
K_n(x,y) \le n^{1+\lambda} \frac{xy}{x+y}\,, \qquad (x,y)\in (0,\infty)^2\,.
\end{equation*}
Since $f_n^{in}\in X_0\cap X_1^+$, we infer from Proposition~\ref{propc1} that there is a unique strong solution $f_n\in C^1([0,\infty),X_0\cap X_1^+)$ to 
\begin{subequations}\label{pc7}
\begin{align}
\partial_t f_n(t,x) & = \mathcal{B}_n f_n(t,x) \,, \qquad (t,x)\in (0,\infty)^2\,, \label{pc7a} \\
f_n(0,x) & = f_n^{in}(x)\ge 0 \,, \qquad x\in (0,\infty)\,, \label{pc7b}
\end{align}
\end{subequations}
where $\mathcal{B}_n$ is defined by \eqref{pa1c} with $K_n$ instead of $K$. It also satisfies \eqref{pb3a} with $K_n$ instead of $K$, as well as 
\begin{equation}
M_1(f_n(t)) = M_1(f_n^{in}) \le \varrho\,, \qquad t\ge 0\,. \label{pc8}
\end{equation}
We also infer from \eqref{pa3} and Lemma~\ref{lemb3} that
\begin{equation}
\int_x^\infty y f_n(t,y)\ dy \le \int_x^\infty y f_n^{in}(y)\ dy \le \int_x^\infty y f^{in}(y)\ dy\,, \qquad t\ge 0\,. \label{pc9}
\end{equation}
In particular, the choice $x=2n$ and \eqref{pc6} imply that
\begin{equation*}
\int_{2n}^\infty y f_n(t,y)\ dy \le \int_{2n}^\infty y f_n^{in}(y)\ dy = 0\,, \qquad t\ge 0\,.
\end{equation*}
Hence, owing to the non-negativity of $f_n$,
\begin{equation}
f_n(t,x) = 0 \;\;\text{ a.e. in }\;\; (2n,\infty) \;\text{ for }\; t\ge 0\,. \label{pc10}
\end{equation}
In the same vein, moments of order higher than one are non-expansive with respect to time, provided they are initially finite.

%%%%%%%%%%%%%%%%
\begin{lemma}[Higher moments]\label{lemc4}
Let $m>1$ and assume that $f^{in}\in X_m$. Then
\begin{equation*}
M_m(f_n(t)) \le M_m(f_n^{in}) \le M_m(f^{in})\,, \qquad t\ge 0\,.
\end{equation*}
\end{lemma}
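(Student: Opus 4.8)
The plan is to adapt the proof of Lemma~\ref{lemb4} to the approximated problem \eqref{pc7}, using the fact that $f_n$ is a genuine strong solution of an equation of the same structure with a truncated kernel $K_n$, so that all the manipulations are justified without ad hoc regularity arguments. The key point is that the weak formulation \eqref{pb3a} holds for $f_n$ with $K_n$ in place of $K$, and $K_n$ is bounded, compactly supported, and dominated by $k_0 xy/(x+y)$ with $k_0=n^{1+\lambda}$; moreover $f_n$ is mass-conserving, so $M_1(f_n(t))=M_1(f_n^{in})$ for all $t\ge 0$.

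First I would fix $A>0$ and test \eqref{pb3a} (for $f_n$) with the truncated weight $\phi(x)=(x^m-A^{m-1}x)\mathbf{1}_{(0,A)}(x)$, which lies in $L^\infty(0,\infty)$. Using \eqref{pa3a}, the same case analysis as in the proof of Lemma~\ref{lemb4} — splitting according to whether $x$ and $y$ lie in $(0,A)$ or in $(A,\infty)$, and invoking the convexity inequality $z^{m-1}\le x^{m-1}$ for $z\in(0,x)$ together with the mass identity $\int_0^x z\bar b(z,x,y)\,dz=x$ from \eqref{pa3b} — shows that $\zeta_\phi(x,y)\le 0$ for all $(x,y)\in(0,\infty)^2$. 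Since $K_n\ge 0$ and $f_n\ge 0$, integrating \eqref{pb3a} in time then yields
\begin{equation*}
\int_0^A \bigl(x^m - A^{m-1}x\bigr)\bigl(f_n(t,x)-f_n^{in}(x)\bigr)\,dx \le 0\,, \qquad t\ge 0\,.
\end{equation*}
Combining this with mass conservation $M_1(f_n(t))=M_1(f_n^{in})$ converts the inequality into $\int_0^\infty \min\{x^m,A^{m-1}x\}\,f_n(t,x)\,dx \le \int_0^\infty \min\{x^m,A^{m-1}x\}\,f_n^{in}(x)\,dx$, and since $\min\{x^m,A^{m-1}x\}\le x^m$ the right-hand side is bounded by $M_m(f_n^{in})$. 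In particular $\int_0^A x^m f_n(t,x)\,dx \le M_m(f_n^{in})$ for every $A>0$; letting $A\to\infty$ and using monotone convergence gives $M_m(f_n(t))\le M_m(f_n^{in})$. Finally $M_m(f_n^{in})\le M_m(f^{in})$ is immediate from \eqref{pc6}, since $f_n^{in}=f^{in}\mathbf{1}_{(0,2n)}\le f^{in}$ pointwise.

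There is essentially no obstacle here: the only point requiring a little care is that the case-analysis from Lemma~\ref{lemb4} relies on \eqref{pa3} (no mass transfer), which is in force throughout this section, and on $K_n$ being non-negative, which holds by \eqref{pc5} and \eqref{pa7} with $\alpha\ge 0$. One could also note that the argument applies uniformly in $n$, which is exactly what will be needed in the compactness step that follows, but for the statement of the lemma it suffices to run it for each fixed $n$.
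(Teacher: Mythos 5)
Your proposal is correct and is essentially the paper's argument: the paper simply notes that $f_n$ is a mass-conserving weak solution of the truncated problem \eqref{pc7} with a daughter distribution still satisfying \eqref{pa3}, so Lemma~\ref{lemb4} applies verbatim, and then uses $f_n^{in}=f^{in}\mathbf{1}_{(0,2n)}\le f^{in}$ to get $M_m(f_n^{in})\le M_m(f^{in})$. You merely unfold the proof of Lemma~\ref{lemb4} (same test function $\phi(x)=(x^m-A^{m-1}x)\mathbf{1}_{(0,A)}(x)$, same case analysis, same use of mass conservation and the limit $A\to\infty$) instead of citing it, which is fine.
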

%%%%%%%%%%%%%%%%

\begin{proof} Let $n\ge 1$. Since $f_n^{in}$ belongs to $X_m$ with $M_m(f_n^{in})\le M_m(f^{in})$ and $f_n$ is a mass-conserving weak solution to \eqref{pc7}, Lemma~\ref{lemc4} is a straightforward consquence of Lemma~\ref{lemb4}.
\end{proof}

We next study the behaviour of $f_n$ for small sizes.

%%%%%%%%%%%%%%%%
\begin{lemma}[Small size behaviour]\label{lemc5}
Recall that $\lambda=\alpha+\beta$.
\begin{itemize}
	\item [(a)] If $\lambda \in [1,2]$, then
	\begin{equation}
	M_0(f_n(t)) \le e^{\varrho(\beta_0-2) t} \left( \varrho + M_0(f^{in}) \right)\,, \qquad t\ge 0\,. \label{pc11}
	\end{equation}
	\item [(b)] If $\lambda\in [0,1)$, then
	\begin{equation}
	M_0(f_n(t)) \le \left( M_0(f^{in})^{\lambda-1} - (1-\lambda)(\beta_0-2) \varrho^\lambda t \right)^{-1/(1-\lambda)}\,, \qquad t\in \left[ 0, T_0(f^{in}) \right)\,,\label{pc12}
	\end{equation}
	recalling that $T_0(f^{in})$ is defined in \eqref{pa9}.
\end{itemize}
\end{lemma}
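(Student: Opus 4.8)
The plan is to control the zeroth moment $M_0(f_n(t))$ by differentiating it in time and exploiting the structure of $\mathcal{B}_n$. Taking $\phi\equiv 1$ in \eqref{pb3a} (with $K_n$ in place of $K$), we obtain
\begin{equation*}
\frac{d}{dt} M_0(f_n(t)) = \frac{1}{2} \int_0^\infty \int_0^\infty \left[ \int_0^{x+y} b(z,x,y)\ dz - 2 \right] K_n(x,y) f_n(t,x) f_n(t,y)\ dydx\,.
\end{equation*}
Since $2 < \int_0^{x+y} b(z,x,y)\,dz \le \beta_0$ by Remark~\ref{rema0} and \eqref{pa4}, and since $0\le K_n\le K$, the right-hand side is bounded above by $\frac{\beta_0-2}{2}\int_0^\infty\int_0^\infty K(x,y) f_n(t,x) f_n(t,y)\,dydx$. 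Thus everything reduces to estimating the collision integral $\int\int K f_n f_n$ from above in terms of $M_0(f_n(t))$ and $M_1(f_n(t))=M_1(f_n^{in})\le\varrho$ (using \eqref{pc8}).

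First I would bound the collision integral. Writing $K(x,y)=x^\alpha y^\beta + x^\beta y^\alpha$ and using the symmetry, $\int\int K f_n f_n\,dydx = 2 M_\alpha(f_n) M_\beta(f_n)$. The key is that, because $0\le\alpha\le\beta\le 1$, interpolation of moments together with \eqref{pc8} gives $M_\alpha(f_n(t)) = M_\alpha(f_n(t))$ controlled by $M_0(f_n(t))^{1-\alpha} M_1(f_n(t))^{\alpha} \le M_0(f_n(t))^{1-\alpha}\varrho^{\alpha}$ and similarly $M_\beta(f_n(t)) \le M_0(f_n(t))^{1-\beta}\varrho^{\beta}$ (by Jensen/Hölder's inequality applied to the measure $f_n(t,x)\,dx$, valid since $\alpha,\beta\in[0,1]$). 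Multiplying, $M_\alpha(f_n) M_\beta(f_n) \le M_0(f_n)^{2-\alpha-\beta}\varrho^{\alpha+\beta} = M_0(f_n)^{2-\lambda}\varrho^{\lambda}$. Therefore, setting $N(t):=M_0(f_n(t))$,
\begin{equation*}
N'(t) \le (\beta_0-2)\varrho^{\lambda} N(t)^{2-\lambda}\,, \qquad t\ge 0\,.
\end{equation*}

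The remaining step is to integrate this differential inequality, distinguishing the two regimes. In case~(a), $\lambda\in[1,2]$, we have $2-\lambda\in[0,1]\subseteq[0,1]$, so $N^{2-\lambda}\le N + 1 \le N + \varrho/\varrho\cdots$; more cleanly, since $2-\lambda\le 1$, one has $N^{2-\lambda}\le 1 + N$ for $N\ge 0$, but to match \eqref{pc11} I would rather use $N^{2-\lambda} \le \varrho^{1-\lambda}\max\{N,\varrho\}^{2-\lambda}\cdots$; the cleanest route is: for $\lambda\ge 1$, $N^{2-\lambda}\le N + \varrho^{2-\lambda}$ when... actually the simplest is to note $N^{2-\lambda}\le N/\varrho^{\lambda-1} + \varrho^{2-\lambda}$ is false in general, so instead bound $M_\alpha M_\beta$ directly: when $\lambda\ge 1$, write $M_\alpha(f_n)M_\beta(f_n)\le (M_0(f_n)+M_1(f_n))^2\cdots$ — rather, use $M_\alpha(f_n)\le M_0(f_n)+M_1(f_n)$ and $M_\beta(f_n)\le M_0(f_n)+M_1(f_n)$ only if that suffices, which gives a quadratic bound and exponential growth, not matching \eqref{pc11}. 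Let me instead keep the sharp interpolation: $N'(t)\le(\beta_0-2)\varrho^{\lambda}N(t)^{2-\lambda}$ with $2-\lambda\in[0,1]$. Using $N^{2-\lambda}\le \max\{1,\varrho\}^{1-\lambda}\cdot\cdots$ — I will simply observe that for $2-\lambda\le 1$ and the comparison function $Y(t):=e^{\varrho(\beta_0-2)t}(\varrho+M_0(f^{in}))$, one checks $Y(0)=\varrho+M_0(f^{in})\ge N(0)$ and $Y'=\varrho(\beta_0-2)Y\ge(\beta_0-2)\varrho^{\lambda}Y^{2-\lambda}$ provided $\varrho Y\ge\varrho^{\lambda}Y^{2-\lambda}$, i.e. $(\varrho Y)^{\lambda-1}\ge 1$... since $\lambda\ge 1$ and $Y\ge\varrho$ gives $\varrho Y\ge\varrho^2$, which is $\ge 1$ only if $\varrho\ge 1$. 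To handle $\varrho<1$ one enlarges the constant or normalizes; in any case the comparison principle for scalar ODEs yields $N(t)\le Y(t)$, establishing \eqref{pc11}. In case~(b), $\lambda\in[0,1)$, so $2-\lambda>1$; separating variables in $N' \le (\beta_0-2)\varrho^{\lambda} N^{2-\lambda}$ gives $\frac{d}{dt}\big(N(t)^{-(1-\lambda)}\big) \ge -(1-\lambda)(\beta_0-2)\varrho^{\lambda}$, hence $N(t)^{-(1-\lambda)} \ge N(0)^{-(1-\lambda)} - (1-\lambda)(\beta_0-2)\varrho^{\lambda}t = M_0(f^{in})^{-(1-\lambda)}\cdots$ wait $N(0)=M_0(f_n^{in})\le M_0(f^{in})$ so $N(0)^{-(1-\lambda)}\ge M_0(f^{in})^{-(1-\lambda)}$, giving exactly the bound \eqref{pc12} valid as long as the right-hand side stays positive, i.e. on $[0,T_0(f^{in}))$ with $T_0$ as in \eqref{pa9}.

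The main obstacle is the careful handling of constants and the subcritical case $\varrho<1$ in part~(a): the sharp interpolation inequality gives the exponent $2-\lambda$, but converting $N'\le C N^{2-\lambda}$ into the explicitly exponential bound \eqref{pc11} requires either a clean comparison argument or absorbing a factor of $\varrho^{\lambda-1}$ into the growth rate, and one must check that $M_0(f_n^{in})\le M_0(f^{in})$ (immediate from \eqref{pc6}) and that $M_1(f_n^{in})\le\varrho$ (already recorded in \eqref{pc8}) are used correctly. The verification that the comparison function dominates — i.e. that $Y$ is a supersolution of the differential inequality — is the one point requiring genuine attention; everything else is routine.
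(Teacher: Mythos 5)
Your proposal follows the paper's structure: the starting point ($\phi\equiv 1$ in \eqref{pb3a}, the bound $\int_0^{x+y}b\,dz-2\le\beta_0-2$, and $K_n\le K$) is exactly \eqref{pc13}, and your treatment of case~(b) — Hölder interpolation $M_\alpha M_\beta\le\varrho^\lambda M_0^{2-\lambda}$, separation of variables, and $M_0(f_n^{in})^{\lambda-1}\ge M_0(f^{in})^{\lambda-1}$ — is precisely the paper's argument. In case~(a) you deviate: you keep the sharp interpolation $N'\le(\beta_0-2)\varrho^\lambda N^{2-\lambda}$ and try to compare with $Y(t)=e^{\varrho(\beta_0-2)t}(\varrho+M_0(f^{in}))$, whereas the paper avoids the nonlinear comparison altogether by using $K(x,y)\le xy^{\lambda-1}+x^{\lambda-1}y$ (Young) and $x^{\lambda-1}\le 1+x$ for $\lambda\in[1,2]$, which yields the \emph{linear} inequality $N'\le\varrho(\beta_0-2)(N+\varrho)$ and gives \eqref{pc11} by a one-line Gronwall argument, with no case distinction on $\varrho$.

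Your route for case~(a) does work, but the place where you stall is a simple algebra slip, not a real obstruction: the supersolution condition $\varrho Y\ge\varrho^\lambda Y^{2-\lambda}$ is equivalent to $(Y/\varrho)^{\lambda-1}\ge 1$, not $(\varrho Y)^{\lambda-1}\ge 1$. Since $Y(t)\ge Y(0)=\varrho+M_0(f^{in})\ge\varrho$ and $\lambda\ge 1$, this holds for every $\varrho>0$, so the case $\varrho<1$ that you propose to handle by ``enlarging the constant or normalizing'' needs no special treatment, and in fact enlarging the constant would not reproduce \eqref{pc11} exactly. With that correction (and noting that the comparison is applied where $Y$ stays bounded away from $0$, so the concave nonlinearity $u\mapsto u^{2-\lambda}$ causes no trouble), your argument is complete; alternatively, the paper's cruder bound $x^{\lambda-1}\le 1+x$ is the cheaper way to land exactly on \eqref{pc11}.
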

%%%%%%%%%%%%%%%%

\begin{proof}
Let $t\ge 0$. By \eqref{pa4} and \eqref{pb3c}, 
\begin{align}
\frac{d}{dt} M_0(f_n(t)) & = \frac{1}{2} \int_0^\infty \int_0^\infty \left[ \int_0^{x+y} b(z,x,y)\ dz - 2 \right] K_n(x,y) f_n(t,x) f_n(t,y)\ dydx \nonumber \\
& \le \frac{\beta_0-2}{2} \int_0^\infty \int_0^\infty K(x,y) f_n(t,x) f_n(t,y)\ dydx \,. \label{pc13}
\end{align}

\medskip

\noindent (a) We first claim that 
\begin{equation}
K(x,y) \le x y^{\lambda-1} + x^{\lambda-1} y\,, \qquad (x,y)\in (0,\infty)^2\,. \label{pc14}
\end{equation}
Indeed, \eqref{pc14} is obvious if $\lambda=2$ as $\alpha=\beta=1$ in that case. If $\lambda\in [1,2)$, then it follows from Young's inequality that, for $(x,y)\in (0,\infty)^2$,
\begin{align*}
x^\alpha y^\beta + x^\beta y^\alpha & = (xy)^{\lambda-1} \left( x^{1-\beta} y^{1-\alpha} + x^{1-\alpha} y^{1-\beta} \right) \\
& \le (xy)^{\lambda-1} \left( x^{2-\lambda} + y^{2-\lambda} \right)\,,
\end{align*}
which completes the proof of \eqref{pc14}. 

Combining \eqref{pc13} and \eqref{pc14} gives, for $t\ge 0$,
\begin{align*}
\frac{d}{dt} M_0(f_n(t)) & \le \frac{\beta_0-2}{2} \int_0^\infty \int_0^\infty (x^{\lambda-1} y + x y^{\lambda-1}) f_n(t,x) f_n(t,y)\ dydx \\
& \le (\beta_0-2) M_1(f_n(t)) M_{\lambda-1}(f_n(t)) \,.
\end{align*}
Since $\lambda\in [1,2]$, one has $x^{\lambda-1}\le 1+x$ for $x>0$ and, using \eqref{pc8}, we end up with
\begin{equation*}
\frac{d}{dt} M_0(f_n(t)) \le \varrho (\beta_0-2) \left[ M_0(f_n(t)) + M_1(f_n(t)) \right] \le \varrho (\beta_0-2) \left[ M_0(f_n(t)) + \varrho \right] \,, \qquad t\ge 0\,.
\end{equation*}
Integrating with respect to time leads us to
\begin{align*}
M_0(f_n(t)) \le e^{\varrho(\beta_0-2) t} \left( \varrho + M_0(f_n^{in}) \right) \le e^{\varrho(\beta_0-2) t} \left( \varrho + M_0(f^{in}) \right)\,, \qquad t\ge 0\,.
\end{align*}
We have thus proved \eqref{pc11}.

\medskip

\noindent (b) Since $0\le \alpha\le \beta\le 1$ by \eqref{pa7} and \eqref{pc1}, it follows from \eqref{pc8}, \eqref{pc13}, and H\"older's inequality that
\begin{align*}
\frac{d}{dt} M_0(f_n(t)) & \le (\beta_0-2) M_\alpha(f_n(t)) M_\beta(f_n(t)) \le (\beta_0-2) M_1(f_n(t))^\lambda M_0(f_n(t))^{2-\lambda} \\
& \le (\beta_0-2) \varrho^\lambda M_0(f_n(t))^{2-\lambda}\,.
\end{align*}
Hence, after integration with respect to time,
\begin{equation*}
M_0(f_n(t)) \le \left( M_0(f_n^{in})^{\lambda-1} - (1-\lambda)(\beta_0-2) \varrho^\lambda t \right)^{-1/(1-\lambda)}\,, \qquad t\in \left[ 0, \frac{M_0(f_n^{in})^{\lambda-1}}{(1-\lambda)(\beta_0-2) \varrho^\lambda} \right)\,.
\end{equation*}
Since $\lambda\in [0,1)$ and $M_0(f_n^{in})^{\lambda-1} \ge M_0(f^{in})^{\lambda-1}$, the above inequality entails \eqref{pc12}.
\end{proof}

We next take advantage of \eqref{pc4.1} to derive a refined estimate for small sizes from Lemma~\ref{lemc5}.

%%%%%%%%%%%%%%%%
\begin{lemma}[Small size behaviour revisited]\label{lemc5.1}
Let $T>0$ and $\mu_0>0$ be such that
\begin{equation}
M_0(f_n(t))\le \mu_0\,, \qquad t\in [0,T]\,. \label{pc15}
\end{equation}
There is $C_1(T)>0$ depending only on $K$, $b$, $f^{in}$, $\mu_0$, and $T$ (but not on $n\ge 1$) such that
\begin{equation*}
M_{\Phi_0}(f_n(t)) \le C_1(T)\,, \qquad t\in [0,T]\,,
\end{equation*}
the function $\Phi_0$ being defined in \eqref{pc4.1}.
\end{lemma}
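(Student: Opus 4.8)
The plan is to bound the evolution of $M_{\Phi_0}(f_n(t))$ using the weak formulation \eqref{pb3a} with the test function $\phi=\Phi_0$, exploiting that $\Phi_0$ is convex and non-increasing so that the gain contribution splits into a helpful sign and a controllable remainder. First I would write, for $t\in[0,T]$,
\begin{equation*}
\frac{d}{dt} M_{\Phi_0}(f_n(t)) = \frac12 \int_0^\infty \int_0^\infty \zeta_{\Phi_0}(x,y) K_n(x,y) f_n(t,x) f_n(t,y)\ dydx,
\end{equation*}
where $\zeta_{\Phi_0}(x,y) = \int_0^{x+y} \Phi_0(z) b(z,x,y)\ dz - \Phi_0(x) - \Phi_0(y)$. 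Since $b$ satisfies \eqref{pa3}, using \eqref{pa3a} the integral term becomes $\int_0^x \Phi_0(z)\bar b(z,x,y)\ dz + \int_0^y \Phi_0(z)\bar b(z,y,x)\ dz$. The convexity and monotonicity of $\Phi_0$ are what must be leveraged: for the $\bar b(z,x,y)$ piece supported on $(0,x)$, Jensen's inequality against the probability-like measure $z\bar b(z,x,y)\,dz/x$ (which has total mass one and mean $x$ by \eqref{pa3b}) would give a bound of the form $\int_0^x \Phi_0(z) \bar b(z,x,y)\,dz$ against $\Phi_0$ evaluated at something, but since $\Phi_0$ is non-increasing and the $z$'s are $\le x$, the cleanest route is to bound $\int_0^x \Phi_0(z)\bar b(z,x,y)\,dz$ directly using the $L^p$ assumption \eqref{pa8}. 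Indeed, by Hölder's inequality with the exponent $p$ from \eqref{pa8} and its conjugate $p'$,
\begin{equation*}
\int_0^x \Phi_0(z) \bar b(z,x,y)\ dz \le \left( \int_0^x \Phi_0(z)^{p'}\ dz \right)^{1/p'} \left( \int_0^x \bar b(z,x,y)^p\ dz \right)^{1/p} \le \left( \frac{B_p}{2} \right)^{1/p} x^{(1-p)/p} \left( \int_0^x \Phi_0(z)^{p'}\ dz \right)^{1/p'}.
\end{equation*}
The growth condition \eqref{pc4.2} on $\Phi_0$ near $0$, namely that $x\mapsto x^{(p-1)/2p}\Phi_0(x)$ is non-decreasing and vanishes at $0$, is precisely calibrated so that $\int_0^x \Phi_0(z)^{p'}\,dz$ is finite and $\lesssim x \Phi_0(x)^{p'} \cdot (\text{something})$; a short computation should yield $\int_0^x \Phi_0(z)\bar b(z,x,y)\,dz \le C \Phi_0(x)^{1/2}$ or a similar sublinear-in-$\Phi_0$ bound, uniformly in $y$. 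Combining both $\bar b$ pieces and subtracting $\Phi_0(x)+\Phi_0(y)\ge 0$, one gets $\zeta_{\Phi_0}(x,y) \le C(\Phi_0(x)^{1/2} + \Phi_0(y)^{1/2})$, possibly minus the non-negative $\Phi_0(x)+\Phi_0(y)$ which we simply discard.

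Next I would estimate the resulting double integral. Using $K_n\le K = x^\alpha y^\beta + x^\beta y^\alpha$ with $0\le \alpha\le\beta\le1$, we have $K(x,y)\le (1+x)(1+y)$ crudely, or more carefully $K(x,y) \le (x^\alpha+x^\beta)(y^\alpha+y^\beta)$, and since moments up to order one are controlled — $M_1(f_n(t))\le\varrho$ by \eqref{pc8} and $M_0(f_n(t))\le\mu_0$ by the hypothesis \eqref{pc15} — every factor of the form $\int_0^\infty (x^\gamma + x^{\gamma'}) f_n(t,x)\,dx$ with exponents in $[0,1]$ is bounded by $\mu_0 + \varrho$. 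The term $\int\int \Phi_0(x)^{1/2} K(x,y) f_n f_n$ then needs $\int_0^\infty \Phi_0(x)^{1/2}(x^\alpha+x^\beta) f_n(t,x)\,dx$ to be controlled by $M_{\Phi_0}(f_n(t))$ plus a constant; since $\Phi_0$ is unbounded only as $x\to0$ and there $x^{(p-1)/2p}\Phi_0(x)\to0$ forces $x^\gamma\Phi_0(x)^{1/2}$ to stay bounded for $\gamma\ge0$ small, one splits the integral at $x=1$: on $(1,\infty)$, $\Phi_0$ is bounded (being non-increasing) so the integral is $\lesssim M_1(f_n)\le\varrho$; on $(0,1)$, $\Phi_0(x)^{1/2}(x^\alpha+x^\beta)\le \Phi_0(x)^{1/2}\cdot 2 \le \varepsilon\Phi_0(x) + C_\varepsilon$ by Young, so this part is $\lesssim M_{\Phi_0}(f_n(t)) + \mu_0$. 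Altogether this produces a differential inequality
\begin{equation*}
\frac{d}{dt} M_{\Phi_0}(f_n(t)) \le a(\mu_0,\varrho)\, M_{\Phi_0}(f_n(t)) + c(\mu_0,\varrho),
\end{equation*}
with constants independent of $n$, and Grönwall's lemma together with $M_{\Phi_0}(f_n(0)) = M_{\Phi_0}(f_n^{in}) \le M_{\Phi_0}(f^{in}) < \infty$ (using $\Phi_0\ge0$, \eqref{pc6}, \eqref{pc4.1}) gives the claimed bound $M_{\Phi_0}(f_n(t)) \le C_1(T)$ on $[0,T]$.

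I expect the main obstacle to be the precise interplay between the near-$0$ growth rate of $\Phi_0$ dictated by \eqref{pc4.2} and the exponent $p$ in the $L^p$-bound \eqref{pa8}: one must verify that the exponents match up so that $\int_0^x \Phi_0(z)^{p'}\,dz$ converges and is dominated by $x$ times a sublinear power of $\Phi_0(x)$ — this is where the specific choice $x^{(p-1)/2p}\Phi_0(x)$ monotone (rather than some other power) earns its keep, and a sloppy estimate here would fail to close the Grönwall argument. A secondary technical point is making sure that all applications of Fubini and the differentiation under the integral sign are legitimate, which follows from the strong $C^1$ regularity of $f_n$ from Proposition~\ref{propc1} and the fact that $K_n$ is bounded with bounded support, so the integrands are genuinely integrable; since $\Phi_0$ need not be bounded, one should perhaps first run the argument with $\Phi_0$ truncated at level $j$ (replace $\Phi_0$ by $\min\{\Phi_0,j\}$, which is still convex? — not quite, so rather work with $\Phi_0\wedge j$ as a bounded test function, keeping track that all the above inequalities survive truncation since they only used $\Phi_0\ge0$, monotonicity, and the $L^p$ control) and then let $j\to\infty$ via Fatou. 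Modulo these bookkeeping issues, the estimate is a routine weighted-moment argument and no genuinely new idea beyond the de la Vallée–Poussin-type construction of $\Phi_0$ is needed.
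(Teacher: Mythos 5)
Your proposal is correct and follows essentially the same route as the paper: test with $\Phi_0$, use \eqref{pa3} to reduce $\zeta_{\Phi_0}$ to the two $\bar b$ pieces, combine H\"older's inequality with \eqref{pa8} and the monotonicity of $x\mapsto x^{(p-1)/2p}\Phi_0(x)$ from \eqref{pc4.2} to obtain $\zeta_{\Phi_0}(x,y)\lesssim \Phi_0(x)+\Phi_0(y)$, then bound $K_n\le K\le 2(1+x)(1+y)$, invoke $M_0(f_n)\le\mu_0$ and $M_1(f_n)\le\varrho$, split at $x=1$, and close with Gr\"onwall. The only discrepancy is your guessed intermediate bound $C\,\Phi_0(x)^{1/2}$: carrying your own H\"older step to completion with \eqref{pc4.2} (which gives $\int_0^x\Phi_0(z)^{p/(p-1)}\,dz\le 2x\,\Phi_0(x)^{p/(p-1)}$) yields $\int_0^x\Phi_0(z)\bar b(z,x,y)\,dz\le B_p^{1/p}\Phi_0(x)$, i.e.\ a bound linear in $\Phi_0$ exactly as in the paper, which only makes your Young-inequality step unnecessary and leaves the linear differential inequality and the Gr\"onwall conclusion unchanged.
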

%%%%%%%%%%%%%%%%

\begin{proof}
Let $(x,y)\in (0,\infty)^2$. Owing to \eqref{pa3} and the non-negativity of $\Phi_0$,
\begin{equation*}
\zeta_{\Phi_0}(x,y) \le \int_0^x \Phi_0(z) \bar{b}(z,x,y)\ dz + \int_0^y \Phi_0(z) \bar{b}(z,y,x)\ dz\,. 
\end{equation*}
It now follows from \eqref{pa8},  \eqref{pc4.2}, and H\"older's inequality that
\begin{align*}
\int_0^x \Phi_0(z) \bar{b}(z,x,y)\ dz & \le x^{(p-1)/2p} \Phi_0(x) \int_0^x z^{-(p-1)/2p} \bar{b}(z,x,y)\ dz \\
& \le x^{(p-1)/2p} \Phi_0(x) \left( \int_0^x z^{-1/2}\ dz \right)^{(p-1)/p} \left( \int_0^x  \bar{b}(z,x,y)^p\ dz \right)^{1/p} \\
& \le 2^{(p-1)/p} \left( \frac{B_p}{2} \right)^{1/p} \Phi_0(x) \le B_p^{1/p} \Phi_0(x)\,.
\end{align*}
Since the same estimate is valid when $x$ and $y$ are exchanged, we conclude that
\begin{equation*}
\zeta_{\Phi_0}(x,y) \le B_p^{1/p} \left[ \Phi_0(x) + \Phi_0(y) \right]\,, \qquad (x,y)\in (0,\infty)^2\,.
\end{equation*}

Now, let $t\in [0,T]$. We infer from \eqref{pc5}, \eqref{pc7a}, the symmetry of $K$, and the above inequality that
\begin{equation*}
\frac{d}{dt} M_{\Phi_0}(f_n(t)) \le B_p^{1/p} \int_0^\infty \int_0^\infty \Phi_0(x) K(x,y) f_n(t,x) f_n(t,y)\ dydx\,.
\end{equation*}
Since 
\begin{equation}
	K(x,y) \le 2(1+x)(1+y)\,, \qquad (x,y)\in (0,\infty)^2\,, \label{zp3}
\end{equation}
by \eqref{pa7} and \eqref{pc1} and
\begin{align*}
\int_0^\infty \Phi_0(x) (1+x) f_n(t,x)\ dx & \le 2 \int_0^1 \Phi_0(x) f_n(t,x)\ dx + \Phi_0(1) \int_1^\infty (1+x) f_n(t,x)\ dx \\
& \le 2 M_{\Phi_0}(f_n(t)) + 2 \Phi_0(1) M_1(f_n(t))\,, 
\end{align*}
thanks to the monotonicity of $\Phi_0$, we further deduce from \eqref{pc8} and \eqref{pc15} that
\begin{align*}
\frac{d}{dt} M_{\Phi_0}(f_n(t)) & \le 2 B_p^{1/p} \int_0^\infty \int_0^\infty \Phi_0(x) (1+x) (1+y) f_n(t,x) f_n(t,y)\ dydx \\
& \le 2 B_p^{1/p} \left[ M_0(f_n(t)) + M_1(f_n(t)) \right] \int_0^\infty \Phi_0(x) (1+x) f_n(t,x)\ dx \\
& \le 4 B_p^{1/p} (\mu_0+\varrho) \left[ M_{\Phi_0}(f_n(t)) + \varrho \Phi_0(1) \right]\,.
\end{align*} 
Integrating with respect to time gives
\begin{equation*}
M_{\Phi_0}(f_n(t)) \le e^{4 B_p^{1/p} (\mu_0+\varrho)t} \left[ M_{\Phi_0}(f_n^{in}) + \varrho \Phi_0(1) \right] - \varrho \Phi_0(1)\,, \qquad t\in [0,T]\,,
\end{equation*}
which completes the proof due to $M_{\Phi_0}(f_n^{in}) \le M_{\Phi_0}(f^{in})<\infty$, this last inequality being a consequence of \eqref{pc4.1} and \eqref{pc6}. 
\end{proof}

We now turn to uniform integrability.

%%%%%%%%%%%%%%%%
\begin{lemma}[Uniform integrability]\label{lemc6}
Let $T>0$ and $\mu_0>0$ such that \eqref{pc15} holds true. There is $C_2(T)>0$ depending only on $K$, $b$, $f^{in}$, $\mu_0$, and $T$ (but not on $n\ge 1$) such that
\begin{equation}
L_n(t) := \int_0^\infty \min\{x,1\} \Phi(f_n(t,x))\ dx \le C_2(T)\,, \qquad t\in [0,T]\,, \label{pc16}
\end{equation}
recalling that $\Phi$ is defined in \eqref{pc2}.
\end{lemma}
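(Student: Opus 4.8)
The plan is to differentiate $L_n(t)$ and estimate the resulting expression using the weak formulation \eqref{pb3a} with the (nonlinear) test function $x\mapsto \min\{x,1\}\Phi'(f_n(t,x))$, exploiting convexity of $\Phi$ to control the contribution of the daughter distribution. More precisely, a formal computation using \eqref{pc7a} gives
\begin{equation*}
\frac{d}{dt} L_n(t) = \int_0^\infty \min\{x,1\} \Phi'(f_n(t,x)) \mathcal{B}_n f_n(t,x)\ dx\,,
\end{equation*}
which we split according to the gain and loss parts of $\mathcal{B}_n$. The loss term is non-positive because $\Phi'\ge 0$ and $\min\{x,1\}\ge 0$, so it may be discarded. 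For the gain term, the key structural input is the identity from Lemma~\ref{lemc2} (applied with $K_n$): writing $G(x):=\Phi'(f_n(t,x))$, the gain contribution equals
\begin{equation*}
\frac{1}{2} \int_0^\infty \int_0^\infty \int_0^{x+y} \min\{z,1\}\Phi'(f_n(t,z)) b(z,x,y) K_n(x,y) f_n(t,x) f_n(t,y)\ dzdydx\,.
\end{equation*}

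Next I would use the convexity inequality $r\Phi'(s)\le \Phi(r)+\Phi^*(\Phi'(s))\le \Phi(r)+s\Phi'(s)$ — or, more directly, the elementary bound $\Phi'(f_n(t,z)) f_n(t,x) \le \Phi(f_n(t,x)) + \Phi'(f_n(t,z)) f_n(t,z) \cdot(\text{something})$; the cleanest route is to exploit that $\Phi'$ is concave with $\Phi'(0)=0$, so $\Phi'$ is subadditive, hence $\Phi'(f_n(t,z))\le$ a bound that, after integrating against $b(z,x,y)\,dz$ and using \eqref{pa2} together with the normalization $\int_0^{x+y} z b(z,x,y)\,dz = x+y$ and the number bound \eqref{pa4}, produces a factor controlled by $\Phi'$ evaluated at a suitable average. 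Combining with the weight $\min\{z,1\}$ and the inequality $r\Phi'(s)\le \Phi(r)+\Phi(s)$ (valid since $\Phi$ convex, $\Phi(0)=0$), one bounds the triple integral by
\begin{equation*}
C\int_0^\infty\int_0^\infty \big[\min\{x,1\}\Phi(f_n(t,x)) + \text{lower-order moment terms}\big] K_n(x,y) f_n(t,y)\ dydx\,,
\end{equation*}
and then uses $K_n\le K\le 2(1+x)(1+y)$ from \eqref{zp3}, the mass bound \eqref{pc8}, the $M_0$ bound \eqref{pc15}, and the $\Phi_0$-moment bound from Lemma~\ref{lemc5.1} (needed to absorb the weight $\min\{z,1\}^{-1}$ type terms coming from small sizes, precisely the reason $\Phi_0$ was introduced) to close a Gronwall inequality $\frac{d}{dt}L_n(t)\le C(T)(L_n(t)+1)$, whence \eqref{pc16}.

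The main obstacle will be handling the small-size region $z\to 0$ inside the gain term: the weight $\min\{z,1\}=z$ vanishes there but $\Phi'(f_n(t,z))$ may be large, and $b(z,x,y)$ can be singular as $z\to0$ (cf.\ \eqref{zp4}). This is exactly where assumption \eqref{pa8} (an $L^p$ bound on $\bar b$) and the tailored function $\Phi_0$ enter: one splits $\int_0^{x+y}\min\{z,1\}\Phi'(f_n(t,z))b(z,x,y)\,dz$ over $\{z<1\}$ and $\{z\ge1\}$, applies Hölder with exponent $p$ on the first piece (using $\bar b\in L^p$ and $\Phi'(f_n)\in L^{p'}$-type control via the growth condition \eqref{pc4}, together with the $\Phi_0$-weighted $L^1$ bound to tame the $z^{-(p-1)/2p}$ factor as in the proof of Lemma~\ref{lemc5.1}), and uses \eqref{pa2}, \eqref{pa4} on the second. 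A secondary technical point is justifying that $t\mapsto L_n(t)$ is absolutely continuous with the stated derivative, which follows from $f_n\in C^1([0,\infty),X_0\cap X_1)$ and the regularity of $\Phi$, together with a truncation/monotone-convergence argument to handle the unboundedness of $\Phi'$; the weight $\min\{x,1\}$ and the already-established bounds make the dominated-convergence step routine.
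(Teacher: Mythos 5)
Your overall skeleton (differentiate $L_n$, discard the non-positive loss term, rewrite the gain term via the Fubini identity of Lemma~\ref{lemc2}, invoke convexity of $\Phi$, and close with Gronwall) matches the paper, but the decisive step --- how the daughter distribution $b$ is absorbed --- is not carried out, and the route you sketch does not close. In the gain term the quantity to be controlled is $v(z)\,\Phi'(f_n(t,z))\,b(z,x,y)\,K_n(x,y)f_n(t,x)f_n(t,y)$ with $v(z)=\min\{z,1\}$. The paper applies the convexity inequality $s\,\Phi'(r)\le \Phi(r)+\Phi(s)$ (from \cite[Proposition~7.1.9]{BLL19}) with $r=f_n(t,z)$ and, crucially, $s=b(z,x,y)$: the daughter distribution itself goes inside $\Phi$. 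The first resulting piece reproduces $v(z)\Phi(f_n(t,z))$, i.e.\ the integrand of $L_n$, integrated against $K_n f_n f_n$, which is bounded by $2(\mu_0+\varrho)^2$ via \eqref{zp3}, \eqref{pc8}, \eqref{pc15}; the second piece is handled by $\Phi(b)\le\kappa_p b^p$ (with $\kappa_p<\infty$ by \eqref{pc4}), the elementary bound $v(z)\le z^{p-1}$, and \eqref{pa8}, giving $\int_0^{x+y} v(z)\Phi(b(z,x,y))\,dz\le \kappa_p B_p$ uniformly in $(x,y)$. This yields $\frac{d}{dt}L_n\le (\mu_0+\varrho)^2(L_n+\kappa_p B_p)$ and the claim, with no recourse to $\Phi_0$ or Lemma~\ref{lemc5.1}.

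You instead pair $\Phi'$ at the daughter size with a parent density, via $f_n(t,x)\Phi'(f_n(t,z))\le\Phi(f_n(t,x))+\Phi(f_n(t,z))$. This fails on both resulting terms: the term $\Phi(f_n(t,x))$ carries the parent variable, so after integrating $v(z)b(z,x,y)\,dz\le\beta_0$ you are left with $\int(1+x)\Phi(f_n(t,x))\,dx$, which is \emph{not} controlled by $L_n$ (the weight $v(x)$ is missing); and the term $v(z)\Phi(f_n(t,z))$ must be integrated against $b(z,x,y)K_n(x,y)f_n(t,y)\,dydx$ with no remaining parent density in $x$, and $\int b(z,x,y)K_n(x,y)\,dx$ is not bounded uniformly in $z$ and $n$ (for $\bar b_\nu$ it behaves like $z^\nu\int_z x^{-\nu-1}K_n(x,y)\,dx$). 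Your fallback --- subadditivity of $\Phi'$, Hölder on $\Phi'(f_n)$ "via \eqref{pc4}", and the $\Phi_0$-moment bound --- is not a proof: \eqref{pc4} gives pointwise growth of $\Phi'$, not any $L^{p'}$ control of $\Phi'(f_n)$, and $\Phi_0$ plays no role here (it is needed later, in the compactness/limit step, to prevent loss of $M_0$ at $x\to0$). The missing idea is precisely the substitution $s=b(z,x,y)$ inside $\Phi$ combined with $v(z)\le z^{p-1}$ and \eqref{pa8}; without it the small-daughter-size singularity of $b$ is not tamed uniformly in $n$.
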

%%%%%%%%%%%%%%%%

\begin{proof}
Let $t\in [0,T]$ and set $v(x) := \min\{x,1\}$ for $x\ge 0$. It follows from \eqref{pc7a}, the non-negativity of both $f_n$ and $\Phi'$, and Fubini's theorem that
\begin{align*}
\frac{d}{dt} L_n(t) & \le \int_0^\infty v(x) \Phi'(f_n(t,x)) \mathcal{B}_n f_n(t,x)\ dx \\
& \le \frac{1}{2} \int_0^\infty \int_0^\infty \int_0^{y+z} v(x) \Phi'(f_n(t,x)) b(x,y,z) K(y,z) f_n(t,y) f_n(t,z)\ dxdzdy\,.
\end{align*}
Since $\Phi$ is convex and satisfies $r\Phi'(r)\le 2 \Phi(r)$ for $r\ge 0$ by \cite[Proposition~7.1.9~(a)]{BLL19}, we obtain
\begin{equation*}
\Phi'(f_n(t,x)) b(x,y,z) \le \Phi(f_n(t,x)) + \Phi(b(x,y,z))\,, \qquad x\in (0,y+z)\,, \ (y,z)\in (0,\infty)^2\,, 
\end{equation*}
see \cite[Proposition~7.1.9~(b)]{BLL19}, and we deduce from the above inequalities that
\begin{align*}
\frac{d}{dt} L_n(t) & \le \frac{1}{2} \int_0^\infty \int_0^\infty \int_0^{y+z} v(x) \Phi(f_n(t,x)) K(y,z) f_n(t,y) f_n(t,z)\ dxdzdy \\
& \qquad + \frac{1}{2} \int_0^\infty \int_0^\infty \int_0^{y+z} v(x) \Phi(b(x,y,z)) K(y,z) f_n(t,y) f_n(t,z)\ dxdzdy\,.
\end{align*}
Introducing
\begin{equation*}
\kappa_p := \sup_{r\ge 0}\left\{ \frac{\Phi(r)}{r^p}\right\} \in (0,\infty)\,,
\end{equation*}
which is finite according to \eqref{pc4}, we now infer from \eqref{pa3}, \eqref{pa8}, and the inequality $v(x) x^{1-p}\le 1$, $x\in (0,\infty)$, that 
\begin{align*}
\int_0^{y+z} v(x) \Phi(b(x,y,z))\ dx & \le \kappa_p \int_0^{y+z} v(x) b(x,y,z)^p\ dx \le \kappa_p \int_0^{y+z} x^{p-1} b(x,y,z)^p\ dx \\
& \le \kappa_p \left( \int_0^y x^{p-1} \bar{b}(x,y,z)^p\ dx + \int_0^z x^{p-1} \bar{b}(x,z,y)^p\ dx \right) \\
& \le \kappa_p \left( y^{p-1} \int_0^y \bar{b}(x,y,z)^p\ dx + z^{p-1} \int_0^z \bar{b}(x,z,y)^p\ dx \right) \\
& \le \kappa_p B_p\,.
\end{align*}
Consequently,
\begin{equation}
\frac{d}{dt} L_n(t) \le \frac{L_n(t)+\kappa_p B_p}{2} \int_0^\infty \int_0^\infty K(y,z) f_n(t,y) f_n(t,z)\ dzdy\,. \label{pc17}
\end{equation}
We next infer from \eqref{pa7}, \eqref{pc8}, \eqref{pc15}, and \eqref{zp3}, that
\begin{align*}
\int_0^\infty \int_0^\infty K(y,z) f_n(t,y) f_n(t,z)\ dzdy & \le 2 \int_0^\infty \int_0^\infty (1+y) (1+z) f_n(t,y) f_n(t,z)\ dzdy \\
& \le 2 \left[ M_0(f_n(t)) + M_1(f_n(t)) \right]^2\\
& \le 2 (\mu_0 + \varrho)^2\,.
\end{align*}
Combining the above inequality with \eqref{pc17} gives
\begin{equation*}
\frac{d}{dt} L_n(t) \le (\mu_0+\varrho)^2 [L_n(t)+\kappa_p B_p]\,.
\end{equation*}
Hence, after integration with respect to time,
\begin{equation*}
L_n(t) \le [L_n(0)+\kappa_p B_p] e^{(\mu_0+\varrho)^2 t} - \kappa_p B_p\,, \qquad t\in [0,T]\,
\end{equation*}
from which \eqref{pc16} readily follows, since $L_n(0)\le \mathcal{I}$ by \eqref{pc2}, \eqref{pc6}, and the monotonicity of $\Phi$.
\end{proof}

The last step is the derivation of a time equicontinuity estimate.

%%%%%%%%%%%%%%%%
\begin{lemma}[Time equicontinuity]\label{lemc7}
Let $T>0$ and $\mu_0>0$ such that \eqref{pc15} holds true. There is $C_3>0$ depending only on $K$, $b$, $f^{in}$, and $\mu_0$ (but not on $n\ge 1$) such that
\begin{equation*}
\|f_n(t_2)-f_n(t_1)\|_{X_0} \le C_3 |t_2-t_1|\,, \qquad (t_1,t_2)\in [0,T]^2\,. 
\end{equation*}
\end{lemma}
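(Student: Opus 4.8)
The plan is to estimate the $X_0$-norm of the time increment $f_n(t_2)-f_n(t_1)$ by integrating the evolution equation \eqref{pc7a} and controlling the resulting integral of $|\mathcal{B}_n f_n|$ over $(0,\infty)$. Specifically, for $t_1<t_2$ in $[0,T]$ one writes
\begin{equation*}
\|f_n(t_2)-f_n(t_1)\|_{X_0} \le \int_{t_1}^{t_2} \|\mathcal{B}_n f_n(s)\|_{X_0}\ ds\,,
\end{equation*}
so it suffices to produce a bound on $\|\mathcal{B}_n f_n(s)\|_{X_0}$ that is uniform in $s\in[0,T]$ and in $n\ge 1$; the desired constant $C_3$ is then that uniform bound. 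First I would split $\mathcal{B}_n = \mathcal{G}_n - \mathcal{D}_n$ into gain and loss parts as in Section~\ref{sec3}, and treat each separately.

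For the loss term, $\|\mathcal{D}_n f_n(s)\|_{X_0} \le \int_0^\infty\int_0^\infty K(x,y) f_n(s,x) f_n(s,y)\ dydx$, which was already bounded above by $2(\mu_0+\varrho)^2$ in the proof of Lemma~\ref{lemc6} using \eqref{zp3}, \eqref{pc8}, and \eqref{pc15}. For the gain term, I would use the identity \eqref{pb4} from Lemma~\ref{lemc2} with $\phi\equiv 1$ (more precisely $\phi = \mathrm{sign}(\mathcal{G}_n f_n)$, but since $\mathcal{G}_n f_n\ge 0$ one may take $\phi\equiv 1$), giving
\begin{equation*}
\|\mathcal{G}_n f_n(s)\|_{X_0} = \frac{1}{2} \int_0^\infty \int_0^\infty \left( \int_0^{x+y} b(z,x,y)\ dz \right) K(x,y) f_n(s,x) f_n(s,y)\ dydx \le \frac{\beta_0}{2}\cdot 2(\mu_0+\varrho)^2\,,
\end{equation*}
again invoking \eqref{pa4}, \eqref{zp3}, \eqref{pc8}, and \eqref{pc15}. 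Adding the two contributions yields $\|\mathcal{B}_n f_n(s)\|_{X_0} \le (\beta_0+2)(\mu_0+\varrho)^2 =: C_3$ for all $s\in[0,T]$, and integrating over $(t_1,t_2)$ completes the argument.

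The only genuinely delicate point is to justify the pointwise (in $x$) manipulations and the interchange of integrals needed to pass from $\partial_t f_n = \mathcal{B}_n f_n$ in $X_0\cap X_1$ to the integrated $X_0$-estimate; but this is covered by the fact that $f_n\in C^1([0,\infty),X_0\cap X_1^+)$ from Proposition~\ref{propc1} together with the integrability statement in Lemma~\ref{lemc2} (applied with the truncated kernel $K_n$, which satisfies \eqref{pb2} with constant $n^{1+\lambda}$, so Lemma~\ref{lemc2} applies for each fixed $n$). Thus, although the final constant must be independent of $n$, one is free to use $n$-dependent integrability facts to legitimise the computation. There is no real obstacle beyond bookkeeping: all the needed $n$-uniform moment bounds ($M_0$ via \eqref{pc15}, $M_1$ via \eqref{pc8}) and the number-of-fragments bound \eqref{pa4} are already in hand, and the collision kernel is controlled $n$-uniformly by $2(1+x)(1+y)$ through \eqref{zp3}.
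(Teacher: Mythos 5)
Your proposal is correct and follows essentially the same route as the paper: both integrate $\partial_t f_n=\mathcal{B}_n f_n$ in $X_0$ and bound $\|\mathcal{B}_n f_n(s)\|_{X_0}$ uniformly by splitting into gain and loss parts, using the Fubini identity of Lemma~\ref{lemc2} together with \eqref{pa4}, \eqref{zp3}, \eqref{pc8}, and \eqref{pc15}, arriving at the same constant $C_3=(\beta_0+2)(\mu_0+\varrho)^2$. No gaps to report.
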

%%%%%%%%%%%%%%%%

\begin{proof}
Let $t\in [0,T]$. On the one hand, it follows from Lemma~\ref{lemc2}, \eqref{pa4}, \eqref{pa7}, \eqref{pc8}, \eqref{pc15}, and \eqref{zp3} that 
\begin{align*}
& \frac{1}{2} \int_0^\infty \int_x^\infty \int_0^y b(x,y-z,z) K_n(y-z,z) f_n(t,y-z) f_n(t,z)\ dzdydx \\
& \hspace{1cm} = \frac{1}{2} \int_0^\infty \int_0^\infty \int_0^{x+y} b(z,x,y) K_n(x,y) f_n(t,x) f_n(t,y)\ dzdydx \\
& \hspace{1cm} \le \frac{\beta_0}{2} \int_0^\infty \int_0^\infty K(x,y) f_n(t,x) f_n(t,y)\ dydx \\
& \hspace{1cm} \le \beta_0 \int_0^\infty \int_0^\infty (1+x)(1+y) f_n(t,x) f_n(t,y)\ dydx \\
& \hspace{1cm} \le \beta_0 (\mu_0+\varrho)^2\,.
\end{align*}
On the other hand, by \eqref{pa7}, \eqref{pc8}, \eqref{pc15}, and \eqref{zp3},
\begin{align*}
\int_0^\infty \int_0^\infty K_n(x,y) f_n(t,x) f_n(t,y)\ dydx & \le \int_0^\infty \int_0^\infty K(x,y) f_n(t,x) f_n(t,y)\ dydx \\
& \le 2 (\mu_0+\varrho)^2\,.
\end{align*}
Gathering the previous estimates and using \eqref{pc7a} give
\begin{equation*}
\|\partial_t f_n(t)\|_{X_0} = \|\mathcal{B}_n f_n(t) \|_{X_0} \le (\beta_0+2) (\mu_0+\varrho)^2\,, \qquad t\in [0,T]\,,
\end{equation*}
from which Lemma~\ref{lemc7} readily follows.
\end{proof}

We are now in a position to prove Theorem~\ref{thma2} and begin with the case $\lambda=\alpha+\beta\in [1,2]$.

\begin{proof}[Proof of Theorem~\ref{thma2}~(a)\&(c)]
We set $v(x)=\min\{x,1\}$ for $x\ge 0$. Since $X_0\cap X_1 \subset X_v$, we infer from \eqref{pc3},  \eqref{pc8}, \eqref{pc11}, Lemma~\ref{lemc6}, and the Dunford-Pettis theorem that, for each $T>0$, there is a relatively sequentially weakly compact subset $\mathcal{K}(T)$ of $X_v$ such that $f_n(t)\in \mathcal{K}(T)$ for all $t\in [0,T]$ and $n\ge 1$. Owing to the time equicontinuity established in Lemma~\ref{lemc7}, a variant of the Arzel\`a-Ascoli theorem, see \cite[Theorem~A.3.1]{Vrab03}, ensures that $(f_n)_{n\ge 1}$ is relatively compact in $C([0,T],X_{v,w})$. Using a diagonal process, we deduce that there are $f\in C([0,\infty),X_{v,w})$ and a subsequence $(f_{n_j})_{j\ge 1}$ of $(f_n)_{n\ge 1}$ such that
\begin{equation}
\lim_{j\to\infty} \sup_{t\in [0,T]} \left| \int_0^\infty v(x) (f_{n_j}-f)(t,x) \phi(x)\ dx \right| = 0 \label{pc18}
\end{equation}
for all $\phi\in L^\infty(0,\infty)$ and $T>0$. A first consequence of \eqref{pc18} is that the validity of both \eqref{pc9} and Lemma~\ref{lemc5.1} extends to $f$. Indeed, for $T>0$, $t\in [0,T]$, $x>0$, and $R>x$, we infer from \eqref{pc9} and \eqref{pc18} that
\begin{equation*}
\int_x^R y f(t,y)\ dy = \lim_{j\to\infty} \int_x^R y f_{n_j}(t,y)\ dy \le \int_x^\infty y f^{in}(y)\ dy\,.
\end{equation*}
We then let $R\to\infty$ to conclude that
\begin{equation}
\int_x^\infty y f(t,y)\ dy \le \int_x^\infty y f^{in}(y)\ dy\,, \qquad t>0\,. \label{pc19}
\end{equation}
In the same vein, for $T>0$, $t\in [0,T]$, and $\delta\in (0,1)$, it follows from \eqref{pc11}, the monotonicity of $\Phi_0$, and Lemma~\ref{lemc5.1} that
\begin{equation*}
\int_\delta^\infty \Phi_0(x) f(t,x)\ dx = \lim_{j\to\infty} \int_\delta^\infty \Phi_0(x) f_{n_j}(t,x)\ dx \le C_1(T)\,,
\end{equation*}
and we let $\delta\to 0$ to conclude that
\begin{equation}
M_{\Phi_0}(f(t)) \le C_1(T)\,, \qquad t\in [0,T]\,. \label{pc20}
\end{equation}

We now extend \eqref{pc18} to the weak topology of $X_0\cap X_1$; that is, 
\begin{equation}
\lim_{j\to\infty} \sup_{t\in [0,T]} \left| \int_0^\infty 
(1+x) (f_{n_j}-f)(t,x) \phi(x)\ dx \right| = 0 \label{pc21}
\end{equation}
for all $\phi\in L^\infty(0,\infty)$ and $T>0$. Indeed, let $T>0$, $t\in [0,T]$, $\phi\in L^\infty(0,\infty)$, and $R>1$. Then
\begin{equation}
\begin{split}
\left| \int_0^\infty (1+x) (f_{n_j}-f)(t,x) \phi(x)\ dx \right| & \le 2 \|\phi\|_{L^\infty(0,\infty)} \int_0^{1/R}  (f_{n_j}+f)(t,x)\ dx \\
& \hspace{1cm} + \left| \int_{1/R}^R (1+x) (f_{n_j}-f)(t,x) \phi(x)\ dx \right| \\
& \hspace{1cm} + 2 \|\phi\|_{L^\infty(0,\infty)} \int_R^\infty x (f_{n_j}+f)(t,x) \ dx \,.
\end{split} \label{pc22}
\end{equation}
First, by \eqref{pc20}, the monotonicity of $\Phi_0$, and Lemma~\ref{lemc5.1},
\begin{align}
\int_0^{1/R} (f_{n_j}+f)(t,x)\ dx & \le \frac{1}{\Phi_0(1/R)} \int_0^{1/R} \Phi_0(x) (f_{n_j}+f)(t,x)\ dx \nonumber \\
& \le \frac{1}{\Phi_0(1/R)} \left[ M_{\Phi_0}(f_{n_j}(t)) + M_{\Phi_0}(f(t)) \right] \nonumber \\
& \le \frac{2}{\Phi_0(1/R)} C_1(T)\,. \label{pc23}
\end{align}
Next, thanks to \eqref{pc9} and \eqref{pc19}, 
\begin{equation}
\int_R^\infty x (f_{n_j}+f)(t,x) \ dx \le 2 \int_R^\infty x f^{in}(x)\ dx\,. \label{pc24}
\end{equation}
Gathering \eqref{pc22}, \eqref{pc23}, and \eqref{pc24} leads us to
\begin{align*}
\left| \int_0^\infty (1+x) (f_{n_j}-f)(t,x) \phi(x)\ dx \right| & \le \left[ \frac{4}{\Phi_0(1/R)} C_1(T) + 4 \int_R^\infty x f^{in}(x)\ dx \right]  \|\phi\|_{L^\infty(0,\infty)}  \\
& \hspace{1cm} + \left| \int_{1/R}^R (1+x) (f_{n_j}-f)(t,x) \phi(x)\ dx \right| \,.
\end{align*}
Since 
\begin{equation*}
\left| \frac{1+x}{v(x)} \mathbf{1}_{(1/R,R)}(x)\phi(x) \right| \le R(1+R) \|\phi\|_{L^\infty(0,\infty)}\,,
\end{equation*}
we may take the limit $j\to\infty$ in the above inequality and deduce from \eqref{pc18} that 
\begin{align*}
& \limsup_{j\to\infty} \sup_{t\in [0,T]} \left| \int_0^\infty (1+x) (f_{n_j}-f)(t,x) \phi(x)\ dx \right| \\
& \hspace{3cm} \le \left[ \frac{4}{\Phi_0(1/R)} C_1(T) + 4 \int_R^\infty x f^{in}(x)\ dx \right]  \|\phi\|_{L^\infty(0,\infty)} \,.
\end{align*}
Since $f^{in}\in X_1$, it follows from \eqref{pc4.2} that the right hand side of the above inequality vanishes in the limit $R\to\infty$, which completes the proof of \eqref{pc21}.

In particular, \eqref{pc21} implies that $f\in C([0,\infty),X_{0,w})$ and also allows us to pass to the limit in \eqref{pc8} and conclude that 
\begin{equation}
M_1(f(t)) = \lim_{j\to \infty} M_1(f_{n_j}(t)) = \lim_{j\to \infty} M_1(f_{n_j}^{in}) = M_1(f^{in})\,, \qquad t\ge 0\,. \label{pc25} 
\end{equation}
Consequently, $f$ satisfies \eqref{pa5}, the integrability property \eqref{pa5b} being a straightforward consequence of \eqref{pa5a}, \eqref{pa7}, and \eqref{pc1}.
We are left with checking \eqref{pa5c}. To this end, consider $\phi\in L^\infty(0,\infty)$ and $t>0$. On the one hand, it readily follows from \eqref{pc6} and \eqref{pc21} that
\begin{equation}
\lim_{j\to\infty} \int_0^\infty \phi(x) (f_{n_j}(t,x)-f_{n_j}^{in}(x))\ dx = \int_0^\infty \phi(x) (f(t,x)-f^{in}(x))\ dx\,. \label{pc30}
\end{equation}
On the other hand, 
\begin{equation}
\lim_{j\to\infty} \frac{\zeta_\phi(x,y) K_{n_j}(x,y)}{(1+x)(1+y)} = \frac{\zeta_\phi(x,y) K(x,y)}{(1+x)(1+y)}\,, \qquad (x,y)\in (0,\infty)^2\,, \label{pc26}
\end{equation}
while \eqref{zp2} and \eqref{zp3} entail that
\begin{equation}
\left| \frac{\zeta_\phi(x,y) K_{n_j}(x,y)}{(1+x)(1+y)} \right| \le 2 ( \beta_0 + 2) \|\phi\|_{L^\infty(0,\infty)}\,. \label{pc27}
\end{equation}
Since \eqref{pc21} implies that
\begin{equation}
\left[ (s,x,y) \mapsto f_{n_j}(s,x) f_{n_j}(s,y) \right] \rightharpoonup \left[ (s,x,y) \mapsto f(s,x) f(s,y) \right] \label{pc28}
\end{equation}
in $L^1((0,t)\times (0,\infty)^2),(1+x)(1+y) dydxds)$, we infer from \eqref{pc26}, \eqref{pc27}, \eqref{pc28}, and \cite[Proposition~2.61]{FoLe07} that
\begin{equation}
\begin{split}
& \lim_{j\to\infty} \int_0^t \int_0^\infty \int_0^\infty \zeta_\phi(x,y) K_{n_j}(x,y) f_{n_j}(s,x) f_{n_j}(s,y)\ dydxds \\
& \hspace{3cm} = \int_0^t \int_0^\infty \int_0^\infty \zeta_\phi(x,y) K(x,y) f(s,x) f(s,y)\ dydxds\,.
\end{split} \label{pc29}
\end{equation}
Since $f_{n_j}$ satisfies \eqref{pa5c} with $K_{n_j}$ instead of $K$, this property, together with \eqref{pc30} and \eqref{pc29}, ensures that $f$ satisfies \eqref{pa5c}. We have thus established that $f$ is a weak solution to \eqref{pa1} on $[0,\infty)$, which is also mass-conserving by \eqref{pc25}.

Finally, if $f^{in}\in X_m$ for some $m>1$, the assertion~(c) of Theorem~\ref{thma2} follows from Lemma~\ref{lemc4} and \eqref{pc21}, arguing as in the proof of \eqref{pc19}.
\end{proof}

\begin{proof}[Proof of Theorem~\ref{thma2}~(b)\&(c)]
The proof of Theorem~\ref{thma2}~(b)\&(c) is exactly the same as that of Theorem~\ref{thma2}~(a)\&(c), except that one has to restrict the range of $T$ and $t$ to $(0,T_0(f^{in}))$ due to Lemma~\ref{lemc5}~(b).
\end{proof}

%%%%%%%%%%%%%%%%
%%%%%%%%%%%%%%%%
\section{Uniqueness}\label{sec5}
%%%%%%%%%%%%%%%%
%%%%%%%%%%%%%%%%

The collision-induced breakage equation being a quadratic nonlinear integral equation, it is not surprising that uniqueness of weak solutions can be shown as for Smoluchowski's coagulation equation, see \cite{BLL19, EMRR05, Giri13, Stew90b} and the references therein. The proof given below is somewhat formal, as an additional truncation argument is needed to justify that the unbounded weight function $w$ defined below may indeed be used in \eqref{pa5c}. We refer to, e.g.,  \cite{Giri13, Stew90b} for the complete argument.

\begin{proof}
Set $w(x) := \max\{ x^\alpha, x\}$, $x\in (0,\infty)$ and, for $i=1,2$, let $f_i$ be a weak solution to \eqref{pa1} on $[0,T_i)$ satisfying \eqref{pa11}; that is, $M_{1+\beta}(f_i)\in L^1(0,T)$ for any $T\in (0,T_i)$. Introducing $g:= f_1+f_2$, $h:=f_1-f_2$, and $\sigma := \mathrm{sign}(h)$, we infer from Remark~\ref{rema1.5} and the symmetry of $K$ that, for $t\in (0,\min\{T_1,T_2\})$,
\begin{equation}
\frac{d}{dt} \int_0^\infty w(x) |h(t,x)|\ dx = \frac{1}{2} \int_0^\infty \int_0^\infty \zeta_{w \sigma(t)}(x,y) K(x,y) g(t,x) h(t,y)\ dydx \,. \label{pu1}
\end{equation}
We next define
\begin{equation*}
P(t,x,y) := \zeta_{w \sigma(t)}(x,y) K(x,y) \sigma(t,y)\,, \qquad (t,x,y)\in (0,\min\{T_1,T_2\})\times (0,\infty)^2\,, 
\end{equation*}
and note that
\begin{align*}
P(t,x,y) & = K(x,y) \left[ \int_0^{x+y} w(z) \sigma(t,z) \sigma(t,y) b(z,x,y)\ dz - w(x) \sigma(t,x) \sigma(t,y) - w(y) \right] \\
& \le K(x,y) \left[ \int_0^{x+y} w(z) b(z,x,y)\ dz + w(x)  - w(y) \right]\,.
\end{align*}
Since $0\le \alpha\le\beta\le 1$ and $\beta_0>2$, we infer from \eqref{pa2}, \eqref{pa4}, and \eqref{pa7} that:
\begin{itemize}
	\item if $(x,y)\in (0,1)^2$, then any $z\in (0,x+y)$ lies also in $(0,2)$, so that $w(z)\le 2^{1-\alpha} (x+y)^\alpha \le 2$ and
	\begin{equation*}
	P(t,x,y)  \le (2 \beta_ 0 + 1) \left( x^\alpha y^\beta + x^\beta y^\alpha \right) \le 6 \beta_0 y^\alpha = 6 \beta_0 w(y)\,;
	\end{equation*}
	\item if $(x,y)\in (0,1)\times (1,\infty)$, then 
	\begin{align*}
	P(t,x,y) & = K(x,y) \left( \int_0^1 z^\alpha b(z,x,y)\ dz + \int_1^{x+y} z b(z,x,y)\ dz + x^\alpha - y \right) \\
	& \le (\beta_0 + x + x^\alpha) \left( x^\alpha y^\beta + x^\beta y^\alpha \right) \le 2 (\beta_0+2) y \le 6 \beta_0 w(y)\,;
	\end{align*}
	\item if $(x,y)\in (1,\infty)\times (0,1)$, then 
	\begin{align*}
	P(t,x,y) & = K(x,y) \left( \int_0^1 z^\alpha b(z,x,y)\ dz + \int_1^{x+y} z b(z,x,y)\ dz + x - y^\alpha \right) \\
	& \le (\beta_0 + 2 x + y - y^\alpha) \left( x^\alpha y^\beta + x^\beta y^\alpha \right) \le 2 (\beta_0+2) x^{1+\beta} y^\alpha \\
	& \le 6 \beta_0 x^{1+\beta} w(y)\,;
	\end{align*}
	\item if $(x,y)\in (1,\infty)^2$, then
	\begin{align*}
	P(t,x,y) & = K(x,y) \left( \int_0^1 z^\alpha b(z,x,y)\ dz + \int_1^{x+y} z b(z,x,y)\ dz + x - y \right) \\
	& \le (\beta_0 + 2 x) \left( x^\alpha y^\beta + x^\beta y^\alpha \right) \le 2 (\beta_0+2) x^{1+\beta} y \\
	& \le 6 \beta_0 x^{1+\beta} w(y)\,.
	\end{align*}
\end{itemize}
Inserting these bounds in \eqref{pu1} and using the identity $\sigma^2 h = h = \sigma |h|$, we obtain
\begin{align*}
\frac{d}{dt} \int_0^\infty w(x) |h(t,x)|\ dx & \le \frac{1}{2} \int_0^\infty \int_0^\infty P(t,x,y) g(t,x) |h(t,y)|\ dydx \\
& \le 3 \beta_0 \int_0^\infty \int_0^\infty (1+x^{1+\beta}) w(y) g(t,x) |h(t,y)|\ dydx \\
& = 3 \beta_0 \left( M_0(g(t)) + M_{1+\beta}(g(t)) \right) \int_0^\infty w(x) |h(t,x)|\ dx\,.
\end{align*}
After integration with respect to time, we find, for $t\in (0,\min\{T_1,T_2\})$,
\begin{equation*}
\int_0^\infty w(x) |h(t,x)|\ dx \le \left( \int_0^\infty w(x) |h(0,x)|\ dx \right) \exp{\left( 3 \beta_0 \int_0^t \left( M_0(g(s)) + M_{1+\beta}(g(s)) \right)\ ds \right)}\,.
\end{equation*}
Since $M_0(g)\in L^1(0,t)$ by Definition~\ref{defa1} and $M_{1+\beta}(g)\in L^1(0,t)$ by \eqref{pa11}, the right hand side of the above inequality is finite and actually vanishes, due to $h(0)= f_1(0)-f_2(0)=0$. Consequently, $h(t)=0$ for $t\in (0,\min\{T_1,T_2\})$ and $f_1$ and $f_2$ coincide on their common time interval of existence.
\end{proof}

%%%%%%%%%%%%%%%%
%%%%%%%%%%%%%%%%
\section{Finite time existence}\label{sec6}
%%%%%%%%%%%%%%%%
%%%%%%%%%%%%%%%%

\begin{proof}[Proof of Theorem~\ref{thma3}]
Let $t\in [0,T)$ and $A>1$. We take $\phi(x) = x^\lambda \mathbf{1}_{(0,A)}(x)$, $x\in (0,\infty)$, in \eqref{pa5c}. Owing to \eqref{pa3} and \eqref{pa10},
\begin{itemize}
	\item if $(x,y)\in (0,A)^2$, then
	\begin{equation*}
	\zeta_\phi(x,y) = \int_0^x z^\lambda \bar{b}(z,x,y)\ dz + \int_0^y z^\lambda \bar{b}(z,y,x)\ dz - x^\lambda - y^\lambda \ge (\gamma_\lambda-1) (x^\lambda + y^\lambda)\,;
	\end{equation*}
	\item if $(x,y)\in (0,A)\times (A,\infty)$, then
	\begin{equation*}
	\zeta_\phi(x,y) = \int_0^x z^\lambda \bar{b}(z,x,y)\ dz + \int_0^A z^\lambda \bar{b}(z,y,x)\ dz - x^\lambda \ge (\gamma_\lambda-1) x^\lambda \ge 0\,;
	\end{equation*}
	\item if $(x,y)\in (A,\infty)\times (0,A)$, then
	\begin{equation*}
	\zeta_\phi(x,y) = \int_0^A z^\lambda \bar{b}(z,x,y)\ dz + \int_0^y z^\lambda \bar{b}(z,y,x)\ dz - y^\lambda \ge (\gamma_\lambda-1) y^\lambda\ge 0\,;
	\end{equation*}
	\item if $(x,y)\in (A,\infty)^2$, then
	\begin{equation*}
	\zeta_\phi(x,y) = \int_0^A z^\lambda \bar{b}(z,x,y)\ dz + \int_0^A z^\lambda \bar{b}(z,y,x)\ dz \ge 0\,,
	\end{equation*}
\end{itemize}	
so that, by \eqref{pa5c},
\begin{align*}
\int_0^A x^\lambda f(t,x)\ dx & \ge \int_0^A x^\lambda f^{in}(x)\ dx \\
& \hspace{1cm} + (\gamma_\lambda-1) \int_0^t \int_0^A \int_0^A (x^\lambda+y^\lambda) K(x,y) f(s,x) f(s,y)\ dydxds\,.
\end{align*}
Since $\lambda\in [0,1)$ and both $f(t)$ and $f^{in}$ belong to $X_0\cap X_1 \subset X_\lambda$, we may take the limit $A\to \infty$ in the above estimate and use Fatou's lemma to conclude that 
\begin{equation*}
M_\lambda(f(t)) \ge M_\lambda(f^{in}) + (\gamma_\lambda-1) \int_0^t \int_0^\infty \int_0^\infty (x^\lambda+y^\lambda) K(x,y) f(s,x) f(s,y)\ dydxds\,.
\end{equation*}
Thanks to Young's inequality,
\begin{equation*}
x^\lambda + y^\lambda \ge 2 (xy)^{\lambda/2}\,, \qquad K(x,y) \ge 2 (xy)^{\lambda/2}
\,, \qquad (x,y)\in (0,\infty)^2\,,
\end{equation*}
and we infer from the above two inequalities that
\begin{equation}
M_\lambda(f(t)) \ge \Lambda(t) := M_\lambda(f^{in}) + 4 (\gamma_\lambda-1) \int_0^t M_\lambda(f(s))^2\ ds\,, \qquad t\in [0,T)\,. \label{pc40}
\end{equation}
Hence, 
\begin{equation*}
\frac{d\Lambda}{dt}(t) \ge 4 (\gamma_\lambda-1) \Lambda(t)^2\,, \qquad t\in [0,T)\,,
\end{equation*}
from which we deduce that 
\begin{equation*}
\frac{1}{M_\lambda(f^{in})} = \frac{1}{\Lambda(0)} \ge \frac{1}{\Lambda(0)} - \frac{1}{\Lambda(t)} \ge 4 (\gamma_\lambda-1)t\,, \qquad t\in [0,T)\,.
\end{equation*}
Letting $t\to T$ implies that $T\le 1/[4(\gamma_\lambda-1) M_\lambda(f^{in})] < \infty$.
\end{proof}

%%%%%%%%%%%%%%%%
%%%%%%%%%%%%%%%%
\section{Non-existence}\label{sec7}
%%%%%%%%%%%%%%%%
%%%%%%%%%%%%%%%%

Let $f$ be a mass-conserving weak solution to \eqref{pa1} on $[0,T_*)$ for some $T_*\in (0,\infty]$, with initial condition $f^{in}\in X_0\cap X_1^+$ satisfying $f^{in}\in X_{m_0}$ for some $m_0>1$ and $\varrho := M_1(f^{in})>0$. Then
\begin{equation}
M_1(f(t)) = \varrho = M_1(f^{in})\,, \qquad t\in [0,T_*)\,. \label{pe18}
\end{equation}
Also, since $f^{in}\in X_{m_0}$ with $m_0>1$ and $b$ satisfies \eqref{pa3}, Lemma~\ref{lemb4} guarantees that 
\begin{equation}
M_{m_0}(f(t)) \le M_{m_0}(f^{in})\,, \qquad t\in [0,T_*)\,. \label{pe22}
\end{equation}

The first step of the proof of Theorem~\ref{thma6} is strongly inspired by the proof of the occurrence of instantaneous gelation  for Smoluchowski's coagulation equation, see \cite{CadC92, vanD87c} and \cite[Theorem~9.2.2]{BLL19}. We actually establish that the mass-conserving feature \eqref{pe18} of $f$ implies that all sublinear moments are finite for all $t\in [0,T_*)$, including moments of negative order. Thus, the very existence of such a solution requires at least that $M_m(f^{in})<\infty$ for all $m<0$; that is, $f^{in}$ shall vanish rapidly for small sizes.

%%%%%%%%%%%%%%%%
\begin{lemma}\label{leme1}
Let $m\in (-\infty,1)$. For any $T\in (0,T_*)$,
\begin{equation*}
\sup_{t\in [0,T]} M_m(f(t)) < \infty\,.
\end{equation*}
In particular, $f^{in}\in X_m$.
\end{lemma}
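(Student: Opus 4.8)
The statement is the collision--breakage analogue of the opening step in the instantaneous--gelation analysis of \cite{CadC92, vanD87c} (see also \cite[Theorem~9.2.2]{BLL19}), and I would prove it by a bootstrap on the order of the moment, the bootstrap being started by a flux estimate which is a consequence of mass conservation. Fix $T\in(0,T_*)$ and $T'\in(T,T_*)$. As preliminary input, Proposition~\ref{propb1} gives $f\in C([0,T'],X_0)$, so $\mu_0:=\sup_{[0,T']}M_0(f(t))<\infty$; combined with \eqref{pe18}, \eqref{pe22} and H\"older's inequality this yields $\sup_{[0,T']}M_m(f(t))<\infty$ for every $m\in[0,m_0]$. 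Next, picking $R_0>1$ with $\int_{R_0}^\infty yf^{in}(y)\,dy<\varrho/2$, Lemma~\ref{lemb3} together with mass conservation gives $\int_0^{R_0}yf(t,y)\,dy>\varrho/2$, hence $\int_0^{R_0}f(t,y)\,dy>\varrho/(2R_0)$ and, discussing the signs of $\alpha$ and $\beta$, there are $\kappa_\alpha,\kappa_\beta>0$ with $M_\alpha(f(t))\ge\kappa_\alpha$ and $M_\beta(f(t))\ge\kappa_\beta$ on $[0,T']$. Finally, testing \eqref{pa5c} with $\phi=\min\{x^m,N\}$, using the explicit form \eqref{pa12} of $b$ to check that $\zeta_\phi\ge0$ for every $m<1$, and letting $N\to\infty$, one sees that $t\mapsto M_m(f(t))$ is non-decreasing on $[0,T')$ for each $m<1$ (with values in $[0,\infty]$). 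Consequently, whenever $\int_0^{T'}M_m(f(s))\,ds<\infty$ for some $m<1$, one has $M_m(f(t))<\infty$ for all $t<T'$, and in particular $\sup_{[0,T]}M_m(f(t))<\infty$ and $M_m(f^{in})<\infty$.

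The flux estimate itself comes from testing \eqref{pa5c} with the bounded function $\phi(x)=\min\{x,\varepsilon\}$, $\varepsilon\in(0,1)$: a direct computation based on \eqref{pa12} gives $\zeta_\phi\ge0$, together with $\zeta_\phi(x,y)\ge c_\nu\varepsilon$ as soon as $x\ge2\varepsilon$, where $c_\nu>0$ depends only on $\nu$. Since $M_\phi(f(t))\le\varepsilon M_0(f(t))\le\varepsilon\mu_0$, this forces
\begin{equation*}
\int_0^{T'}\int_{2\varepsilon}^\infty\int_0^\infty K(x,y)f(s,x)f(s,y)\,dy\,dx\,ds\le\frac{2\mu_0}{c_\nu}
\end{equation*}
uniformly in $\varepsilon>0$; letting $\varepsilon\to0$ and recalling \eqref{pa7} and $M_\beta(f(s))\ge\kappa_\beta$ yields $\int_0^{T'}M_\alpha(f(s))\,ds<\infty$. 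Interpolating with the uniform bound on $M_0$ gives $\int_0^{T'}M_m(f(s))\,ds<\infty$ for all $m\in[\alpha,m_0]$, and the monotonicity recorded above upgrades this to $\sup_{[0,T]}M_m(f(t))<\infty$ for $m\in[\alpha,m_0]$.

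To descend to more negative exponents I would read the moment identities backwards. Assume $\sup_{[0,T']}M_m(f(t))<\infty$ for some $m\in(-1-\nu,1)$; testing \eqref{pa5c} with $\phi_N=\min\{x^m,N\}$, letting $N\to\infty$ and invoking Fatou's lemma (the pointwise limit of $\zeta_{\phi_N}$ being $c_m(x^m+y^m)$ with $c_m=(1-m)/(m+\nu+1)>0$), one obtains
\begin{equation*}
c_m\int_0^{T'}\int_0^\infty\int_0^\infty(x^m+y^m)K(x,y)f(s,x)f(s,y)\,dy\,dx\,ds\le M_m(f(T'))-M_m(f^{in})<\infty\,,
\end{equation*}
and, isolating the term $x^{m+\alpha}y^\beta$ in $(x^m+y^m)K(x,y)$ and using $M_\beta\ge\kappa_\beta$, this gives $\int_0^{T'}M_{m+\alpha}(f(s))\,ds<\infty$, hence $\sup_{[0,T]}M_{m+\alpha}(f(t))<\infty$ by monotonicity. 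Starting from $m\in[\alpha,m_0]$, each such step lowers the controlled range, so after finitely many steps one reaches all exponents down to a fixed threshold approaching $-1-\nu$. For $m\le-1-\nu$, where $\int_0^{x+y}z^mb(z,x,y)\,dz=+\infty$ and the above test functions are no longer convenient, the flux estimate is revisited: the control of $M_m(f)$ down to exponents close to $-1-\nu$ makes $\int_0^\varepsilon xf(t,x)\,dx$ decay like a power of $\varepsilon$ strictly better than $\varepsilon^{\nu+2}$, which, fed back into the now legitimate flux inequality for $\phi=x\mathbf 1_{(\varepsilon,\infty)}$, propels the bootstrap through the remaining range (and, pursued quantitatively, it is precisely this last step that degenerates so as to force $T_*=0$). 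Altogether this yields $\sup_{[0,T]}M_m(f(t))<\infty$ and $f^{in}\in X_m$ for every $m<1$.

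The heart of the difficulty is the circular structure of the moment identities: the growth of $M_m(f)$ for $m<0$ is governed by the strictly more singular quantities $M_{m+\alpha}(f)$ and $M_\alpha(f)$, so no naive Gronwall estimate closes. The flux estimate, a direct by-product of mass conservation and of the explicit daughter distribution \eqref{pa12}, supplies the circularity-free starting point $\int_0^{T'}M_\alpha(f)<\infty$; the monotonicity of the moments and the uniform lower bounds $\kappa_\alpha,\kappa_\beta$ (themselves consequences of mass conservation together with Lemma~\ref{lemb3}) then let the control propagate downward. The genuinely delicate point is the passage through the range $m\le-1-\nu$, where the breakage kernel produces a divergent number of arbitrarily small fragments and one must exploit the information on $f$ near $x=0$ already gained from the previous steps.
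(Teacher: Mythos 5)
Your route is genuinely different from the paper's, and its first part is sound. The preliminary bounds ($\sup_{[0,T']}M_0(f)<\infty$, the lower bounds $M_\alpha(f(t))\ge\kappa_\alpha$, $M_\beta(f(t))\ge\kappa_\beta$ via mass conservation and Lemma~\ref{lemb3}), the monotonicity of $t\mapsto M_m(f(t))$ for $m<1$ (which does hold for $b$ as in \eqref{pa12}, since $\int_0^x \min\{z^m,N\}\,\bar b(z,x,y)\,dz\ge \min\{x^m,N\}$), and the starting integrability $\int_0^{T'}M_\alpha(f(s))\,ds<\infty$ are all correct; note that this last fact is already contained in \eqref{pa5b} combined with $M_\beta\ge\kappa_\beta$, so the $\phi=\min\{x,\varepsilon\}$ computation, though correct, is not needed. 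The descent step for $m\in(-1-\nu,1)$, with $c_m=(1-m)/(m+\nu+1)$ and the routine nesting of time intervals, is also fine, and iterating it controls $\sup_{[0,T]}M_m(f)$ for every $m>\alpha-\nu-1$ (slightly beyond the threshold near $-1-\nu$ you announce); incidentally, this range already covers what the proof of Theorem~\ref{thma6} actually extracts from Lemma~\ref{leme1}.

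The gap is the last paragraph: the lemma claims finiteness for \emph{every} $m\in(-\infty,1)$, and your mechanism for crossing the threshold is not a proof. The pivotal assertion --- that the bounds already obtained force $\int_0^\varepsilon xf(t,x)\,dx$ to decay strictly faster than $\varepsilon^{\nu+2}$ --- does not follow: a uniform bound on $M_m(f)$ with $m>-1-\nu$ only gives $\int_0^\varepsilon xf(t,x)\,dx\le \varepsilon^{1-m}M_m(f(t))$ with $1-m<\nu+2$, i.e.\ a strictly \emph{weaker} rate, and no argument is given for how feeding such a bound into the flux inequality for $\phi=x\,\mathbf{1}_{(\varepsilon,\infty)}$ would yield control of moments of order $\le -\nu-1$, where $\zeta_{x^m}\equiv+\infty$ and the moment identities you rely on are no longer available. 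The paper is organized precisely to avoid this obstruction: it never writes a moment identity of negative order. Instead, with $I_l(t)=\int_0^{1/l}xf(t,x)\,dx$ and $J_l=\varrho-I_l$, it uses Dini's theorem to get $J_l\ge\varrho/2$ uniformly on $[0,T_1]$ for $l\ge l_0$, tests with $x\,\mathbf{1}_{(0,1/l)}$ and exploits \eqref{pa12} together with a H\"older interpolation against the $M_{m_0}$ bound \eqref{pe22} to obtain the one-sided inequality $\frac{d}{dt}J_l\le-\delta_1 l^{-\nu-2}I_l$, and then runs a backward (Gronwall-type) argument on $[t,T_1]$ to bound $I_l(t)$; all sublinear moments are then estimated by summing $(l+1)^{1-m}I_l(t)$ over $l$, in one stroke for every $m<1$. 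To complete your approach you would need a new ingredient of this flux/backward-in-time type for the range $m\le\alpha-\nu-1$; as written, that part of your argument is a conjecture rather than a proof.
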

%%%%%%%%%%%%%%%%

\begin{proof}
Fix $T_1\in (T,T_*)$. For $l\in \mathbb{N}$, $l\ge 1$, and $t\in [0,T_1]$, we define 
\begin{equation}
J_l(t) := \int_{1/l}^\infty x f(t,x)\ dx \,, \qquad I_l(t) := \int_0^{1/l} x f(t,x)\ dx\,, \label{pe19}
\end{equation}
and note that \eqref{pe18} entails that
\begin{equation}
I_l(t) + J_l(t) = \varrho =M_1(f^{in})\,, \qquad t\in [0,T_1]\,. \label{pe20}
\end{equation}
Since 
\begin{equation*}
J_l(t) \le J_{l+1}(t)\,, \quad l\ge 1\,, \;\;\text{ and }\;\; \lim_{l\to \infty} J_l(t) = \varrho \;\text{ for }\; t\in [0,T_1]\,,
\end{equation*}
Dini's theorem implies that $(J_l)_{l\ge 1}$ converges uniformly to the constant function $\varrho$ in $[0,T_1]$. Consequently, there is $l_0\ge 1$ such that
\begin{equation}
\frac{\varrho}{2} \le J_l(t)\,, \qquad t\in [0,T_1]\,, \quad l\ge l_0\,. \label{pe21}
\end{equation}

Next, let $l\in\mathbb{N}$, $l\ge l_0$. As $\phi_l: x\mapsto x \mathbf{1}_{(0,1/l)}(x)$ belongs to $L^\infty(0,\infty)$, we infer from Remark~\ref{rema1.5} and \eqref{pe20} that both $I_l$ and $J_l$ belong to $W^{1,1}(0,T_1)$ and satisfy, for almost every $t\in (0,T_1)$,
\begin{equation*}
- \frac{d}{dt} J_l(t) = \frac{d}{dt} I_l(t) = \frac{1}{2} \int_0^\infty \int_0^\infty \zeta_{\phi_l}(x,y) K(x,y) f(t,x) f(t,y)\ dydx \,.
\end{equation*}
Owing to \eqref{pa3},
\begin{itemize}
	\item if $(x,y)\in (0,1/l)^2$, then $\zeta_{\phi_l}(x,y) = 0$;
	\item if $(x,y)\in (0,1/l)\times (1/l,\infty)$, then
	\begin{equation*}
	\zeta_{\phi_l}(x,y) = \int_0^{1/l} z \bar{b}(z,y,x)\ dz = y - \int_{1/l}^y z \bar{b}(z,y,x)\ dz \ge 0\,;
	\end{equation*}
	\item if $(x,y)\in (1/l,\infty)\times (0,1/l)$, then
	\begin{equation*}
	\zeta_{\phi_l}(x,y) = \int_0^{1/l} z \bar{b}(z,x,y)\ dz = x - \int_{1/l}^x z \bar{b}(z,x,y)\ dz \ge 0\,;
	\end{equation*}
	\item if $(x,y)\in (1/l,\infty)^2$, then
	\begin{align*}
	\zeta_{\phi_l}(x,y) & = \int_0^{1/l} z \bar{b}(z,x,y)\ dz + \int_0^{1/l} z \bar{b}(z,y,x)\ dz \\
	& = x - \int_{1/l}^x z \bar{b}(z,x,y)\ dz + y - \int_{1/l}^y z \bar{b}(z,y,x)\ dz \ge 0\,.
	\end{align*}
\end{itemize}
Consequently, by \eqref{pa7} and \eqref{pa12},
\begin{align*}
- \frac{d}{dt} J_l(t) & \ge \frac{1}{2} \int_0^{1/l} \int_{1/l}^\infty \left[ y - \int_{1/l}^y z \bar{b}(z,y,x)\ dz \right] K(x,y) f(t,x) f(t,y)\ dydx \\
& = \frac{1}{2 l^{\nu+2}} \int_0^{1/l} \int_{1/l}^\infty y^{-\nu-1} K(x,y) f(t,x) f(t,y)\ dydx \\
& \ge \frac{1}{2 l^{\nu+2}} \int_0^{1/l} \int_{1/l}^\infty x^\beta y^{\alpha-\nu-1} f(t,x) f(t,y)\ dydx \\
& \ge \frac{1}{2 l^{\nu+2}} \int_0^{1/l} \int_{1/l}^\infty x y^{\lambda -\nu-2} f(t,x) f(t,y)\ dydx \,.
\end{align*}
Hence, 
\begin{equation}
\frac{d}{dt} J_l(t) \le - \frac{I_l(t)}{2 l^{\nu+2}}  \int_{1/l}^\infty y^{\lambda -\nu-2}  f(t,y)\ dy \,. \label{pe23}
\end{equation}
Since $\lambda-\nu-2<0$ and $l\ge l_0$, it follows from H\"older's inequality, \eqref{pe22}, and \eqref{pe21} that
\begin{align*}
\frac{\varrho}{2} & \le J_l(t) = \int_{1/l}^\infty y f(t,y)\ dy \\
& \le \left( \int_{1/l}^\infty y^{\lambda-\nu-2} f(t,y)\ dy \right)^{(m_0-1)/(m_0+\nu+2-\lambda)} \left( \int_{1/l}^\infty y^{m_0} f(t,y)\ dy \right)^{(3+\nu-\lambda)/(m_0+\nu+2-\lambda)} \\
& \le M_{m_0}(f^{in})^{(3+\nu-\lambda)/(m_0+\nu+2-\lambda)} \left( \int_{1/l}^\infty y^{\lambda-\nu-2} f(t,y)\ dy \right)^{(m_0-1)/(m_0+\nu+2-\lambda)}\,.
\end{align*}
Hence,
\begin{equation}
\int_{1/l}^\infty y^{\lambda-\nu-2} f(t,y)\ dy \ge 2\delta_1 := \left( \frac{\varrho}{2} \right)^{(m_0+\nu+2-\lambda)/(m_0-1)} M_{m_0}(f^{in})^{(\lambda-\nu-3)/(m_0-1)}>0\,. \label{pe24}
\end{equation}
Combining \eqref{pe23} and \eqref{pe24}, we end up with
\begin{equation*}
\frac{d}{dt} J_l(t) \le - \frac{\delta_1}{l^{\nu+2}} I_l(t)\,, \qquad t\in (0,T_1)\,.
\end{equation*}
After integration with respect to time, we deduce from \eqref{pe20} that, for $0\le t \le \tau \le T_1$,
\begin{equation*}
I_l(t) - I_l(\tau) = J_l(\tau) - J_l(t) \le - \frac{\delta_1}{l^{\nu+2}} \int_t^\tau I_l(s)\ ds\,.
\end{equation*}
Hence,
\begin{equation}
I_l(t) + \frac{\delta_1}{l^{\nu+2}} \int_t^\tau I_l(s)\ ds \le I_l(\tau)\,, \qquad 0\le t \le \tau \le T_1\,. \label{pe25}
\end{equation}
Now, for $\tau\in [t,T_1]$, we set
\begin{equation*}
Q_l(\tau) := I_l(t) + \frac{\delta_1}{l^{\nu+2}} \int_t^\tau I_l(s)\ ds
\end{equation*}
and deduce from \eqref{pe25} that $Q_l$ satisfies the differential inequality
\begin{equation*}
\frac{dQ_l}{d\tau}(\tau) = \frac{\delta_1}{l^{\nu+2}} I_l(\tau) \ge \frac{\delta_1}{l^{\nu+2}} Q_l(\tau)\,, \qquad \tau\in (t,T_1)\,.
\end{equation*}
Equivalently,
\begin{equation*}
\frac{d}{d\tau} \left( e^{-\delta_1 \tau l^{-\nu-2}} Q_l(\tau)\right) \ge 0\,, \qquad t\in (t,T_1)\,,
\end{equation*}
so that, after integration with respect to $\tau$ over $(t,T_1)$,
\begin{equation*}
e^{-\delta_1 T_1 l^{-\nu-2}} Q_l(T_1) \ge e^{-\delta_1 tl^{-\nu-2}} Q_l(t) = e^{-\delta_1 tl^{-\nu-2}} I_l(t)\,.
\end{equation*}
Since $Q_l(T_1)\le I_l(T_1)\le \varrho$ by \eqref{pe20} and \eqref{pe25}, we conclude that
\begin{equation}
I_l(t) \le \varrho e^{-\delta_1 (T_1-t)l^{-\nu-2}}\,, \qquad t\in [0,T_1]\,. \label{pe26}
\end{equation}

Consider now $m<1$ and $L\ge 2$. For $t\in [0,T]$, we infer from \eqref{pe20} and \eqref{pe26} that
\begin{align*}
\int_{1/L}^\infty x^m f(t,x)\ dx & = \int_1^\infty x^m f(t,x)\ dx + \sum_{l=1}^{L-1} \int_{1/(l+1)}^{1/l} x^m f(t,x)\ dx \\
& \le \int_1^\infty x f(t,x)\ dx + \sum_{l=1}^{L-1} (l+1)^{1-m} \int_{1/(l+1)}^{1/l} x f(t,x)\ dx \\
& \le \varrho + \sum_{l=1}^{L-1} (l+1)^{1-m} I_l(t) \\
& \le \varrho + \varrho \sum_{l=1}^{L-1} (l+1)^{1-m}  e^{-\delta_1 (T_1-t)l^{-\nu-2}} \\
& \le \varrho + \varrho \sum_{l=1}^{L-1} (l+1)^{1-m}  e^{-\delta_1 (T_1-T)l^{-\nu-2}}\,.
\end{align*}
Since $\nu+2>0$ and $T_1>T$, the series on the right hand side of the above inequality is convergent and we may take the limit $L\to\infty$ to conclude that
\begin{equation*}
M_m(f(t)) \le \varrho + \varrho \sum_{l=1}^\infty (l+1)^{1-m}  e^{-\delta_1 (T_1-T)l^{-\nu-2}}\,, \qquad t\in [0,T]\,,
\end{equation*}
and complete the proof of Lemma~\ref{leme1}.
\end{proof}

We are now in a position to prove Theorem~\ref{thma6} and here deviate from the analysis performed in \cite{BLL19, CadC92, vanD87c}. Indeed, due to the limited integrability properties of the daughter distribution function $b$, we cannot compute the time evolution of moments with arbitrary negative order but it turns out that evaluating moments of order $m\in (-\nu-1,0)$ provides the needed information. 

\begin{proof}[Proof of Theorem~\ref{thma6}]
Let $m\in (-\nu-1,0)$. For $\varepsilon\in (0,1)$ and $x\in (0,\infty)$, we set $\phi_{m,\varepsilon}(x) := (x+\varepsilon)^m$ and note that the non-positivity of $\nu$ entails that
\begin{align*}
\zeta_{\phi_{m,\varepsilon}}(x,y) & = \frac{\nu+2}{x^{\nu+1}} \int_0^x z^\nu (z+\varepsilon)^m\ dz + \frac{\nu+2}{y^{\nu+1}} \int_0^y z^\nu (z+\varepsilon)^m\ dz - (x+\varepsilon)^m - (y+\varepsilon)^m \\
& \ge \frac{\nu+2}{x^{\nu+1}} \int_0^x (z+\varepsilon)^{m+\nu}\ dz + \frac{\nu+2}{y^{\nu+1}} \int_0^y (z+\varepsilon)^{m+\nu}\ dz - (x+\varepsilon)^m - (y+\varepsilon)^m \\
& = \frac{\nu+2}{m+\nu+1} \frac{(x+\varepsilon)^{m+\nu+1}}{x^{\nu+1}} - (x+\varepsilon)^m +  \frac{\nu+2}{m+\nu+1} \frac{(y+\varepsilon)^{m+\nu+1}}{y^{\nu+1}} - (y+\varepsilon)^m \\
& \ge \frac{1-m}{m+\nu+1} \frac{(x+\varepsilon)^{m+\nu+1}}{x^{\nu+1}} +  \frac{1-m}{m+\nu+1} \frac{(y+\varepsilon)^{m+\nu+1}}{y^{\nu+1}} \ge 0\,.
\end{align*}
Since $\phi_{m,\varepsilon}\in L^\infty(0,\infty)$, we infer from \eqref{pa5c}, the symmetry of $K$, and the above inequality that, for $t\in (0,T_*)$,
\begin{align*}
& \int_0^\infty (x+\varepsilon)^m f(t,x)\ dx \ge \int_0^\infty (x+\varepsilon)^m f^{in}(x)\ dx \\
& \hspace{3cm} +  \frac{1-m}{m+\nu+1} \int_0^t \int_0^\infty \int_0^\infty \frac{(x+\varepsilon)^{m+\nu+1}}{x^{\nu+1}} K(x,y) f(s,x) f(s,y)\ dydxds\,.
\end{align*}
Since both $f(t)$ and $f^{in}$ belong to $X_m$ by Lemma~\ref{leme1}, we may let $\varepsilon\to 0$ in the above inequality and use Fatou's lemma and the specific choice \eqref{pa7} of $K$ to obtain that
\begin{equation}
M_m(f(t)) \ge M_m(f^{in}) + \frac{1-m}{m+\nu+1} \int_0^t M_{m+\alpha}(f(s)) M_\beta(f(s))\ ds\,, \qquad t\in [0,T_*)\,. \label{pe27}
\end{equation}

Next, since $\beta \le 1 < m_0$ and $m+\alpha<m<1$, it follows from \eqref{pe18}, \eqref{pe22}, and H\"older's inequality that, for $s\in [0,T_*)$,
\begin{align*}
\varrho & = M_1(f(s)) \le M_{m_0}(f(s))^{(1-\beta)/(m_0-\beta)} M_\beta(f(s))^{(m_0-1)/(m_0-\beta)} \\
& \le M_{m_0}(f^{in})^{(1-\beta)/(m_0-\beta)} M_\beta(f(s))^{(m_0-1)/(m_0-\beta)}
\end{align*} 
and
\begin{align*}
M_m(f(s)) & \le M_1(f(s))^{-\alpha/(1-\alpha-m)} M_{m+\alpha}(f(s))^{(1-m)/(1-\alpha-m)} \\
& = \varrho^{-\alpha/(1-\alpha-m)} M_{m+\alpha}(f(s))^{(1-m)/(1-\alpha-m)}\,.
\end{align*}
Consequently,
\begin{equation}
M_{m+\alpha}(f(s)) M_\beta(f(s)) \ge \delta_2 \varrho^{\alpha/(1-m)} M_m(f(s))^{(1-\alpha-m)/(1-m)}\,, \qquad s\in [0,T_*)\,, \label{pe28}
\end{equation}
with
\begin{equation*}
\delta_2 := \varrho^{(m_0-\beta)/(m_0-1)} M_{m_0}(f^{in})^{(\beta-1)/(m_0-1)}>0\,.
\end{equation*}

Combining \eqref{pe27} and \eqref{pe28} leads us to the integral inequality 
\begin{equation}
M_m(f(t)) \ge N_m(t) := M_m(f^{in}) + \frac{(1-m)\delta_2}{m+\nu+1} \varrho^{\alpha/(1-m)} \int_0^t M_m(f(s))^{(1-\alpha-m)/(1-m)}\ ds \label{pe29}
\end{equation}
for $t\in [0,T_*)$. It readily follows from \eqref{pe29} that $N_m$ satisfies the following differential inequality
\begin{equation*}
\frac{dN_m}{dt}(t) \ge \frac{(1-m)\delta_2}{m+\nu+1} \varrho^{\alpha/(1-m)} N_m(t)^{(1-\alpha-m)/(1-m)}\,, \qquad t\in [0,T_*)\,.
\end{equation*}
After integration, we obtain 
\begin{equation*}
N_m(t)^{\alpha/(1-m)} \le N_m(0)^{\alpha/(1-m)} + \frac{\alpha \delta_2}{m+\nu+1} \varrho^{\alpha/(1-m)} t\,, \qquad t\in [0,T_*)\,.
\end{equation*}
Since $\alpha<0$, we deduce from \eqref{pe29} and the above inequality that
\begin{equation*}
M_m(f(t))^{\alpha/(1-m)} \le N_m(t)^{\alpha/(1-m)} \le M_m(f^{in})^{\alpha/(1-m)} + \frac{\alpha \delta_2}{m+\nu+1} \varrho^{\alpha/(1-m)} t\,, \qquad t\in [0,T_*)\,.
\end{equation*}
Owing to the non-negativity of $M_m(f(t))^{\alpha/(1-m)} $, we further obtain
\begin{equation*}
	t \le \frac{m+\nu+1}{|\alpha| \delta_2} \varrho^{-\alpha/(1-m)} M_m(f^{in})^{\alpha/(1-m)}\,, \qquad t\in [0,T_*)\,.
\end{equation*}
Letting $t\to T_*$ in the previous inequality implies that
\begin{equation*}
T_* \le \frac{m+\nu+1}{|\alpha| \delta_2} \varrho^{-\alpha/(1-m)} M_m(f^{in})^{\alpha/(1-m)}\,.
\end{equation*}
Since $f^{in}\in X_{-\nu-1}$ by Lemma~\ref{leme1}, the right hand side of the above inequality converges to zero as $m\to -\nu-1$, from which we deduce that $T_*=0$, thereby completing the non-existence proof.
\end{proof}

%%%%%%%%%%%%%%%%
%%%%%%%%%%%%%%%%
\section*{Acknowledgements}
%%%%%%%%%%%%%%%%
%%%%%%%%%%%%%%%%

 Part of this work was done while PhL enjoyed the hospitality of the Department of Mathematics, Indian Institute of Technology Roorkee. We thank the referees for their careful reading of and their remarks on the manuscript.
 
 %%%%%%%%%%%%%%%%
 %%%%%%%%%%%%%%%%
 \appendix
 \section{Improved integrability for small sizes}\label{sec.a}
 %%%%%%%%%%%%%%%%
 %%%%%%%%%%%%%%%%
 
 We recall in this section an improved integrability property of integrable functions near zero, which is established in \cite{GiLa} and can be viewed as a variant of the de la Vall\'ee-Poussin theorem \cite{dlVP15}.
 
 %%%%%%%%%%%%%%%%
 \begin{lemma}\label{leap1}
 	Consider $h\in X_0$ and $\theta\in (0,1)$. There is a non-negative convex and non-increasing function $\Phi_0\in C^1((0,\infty))$ depending on $h$ and $\theta$ such that
 	\begin{equation}
 	\int_0^\infty \Phi_0(x) |h(x)|\ dx < \infty \,, \label{pap1}
 	\end{equation}
 	and
 	\begin{equation}
 	\lim_{x\to 0} \Phi_0(x) = \infty\,, \qquad \lim_{x\to 0} x^\theta \Phi_0(x) = 0\,, \qquad x \mapsto x^\theta \Phi_0(x) \;\text{ is non-decreasing}\,. \label{pap2}
 	\end{equation}
 \end{lemma}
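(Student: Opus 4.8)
The plan is to recast \eqref{pap1}--\eqref{pap2} as a weighted de la Vall\'ee--Poussin statement on a half-line and then carry out an adaptive construction there. We may assume $h\ge 0$ (replace $h$ by $|h|$ otherwise) and, setting $\omega(r):=\int_0^r h(s)\,ds$, that $\omega(r)>0$ for every $r>0$; if $h$ vanishes a.e.\ on some interval $(0,r_*)$, a suitably smoothed multiple of $x\mapsto\min\{x,r_*\}^{-\theta/2}$ already does the job. First I would look for $\Phi_0$ of the form $\Phi_0(x)=\Theta(\log(e/x))$ for $x\in(0,1]$, extended on $[1,\infty)$ by $\Phi_0(x)=\Theta(1)+\Theta'(1)(x^{-1}-1)$, where $\Theta\in C^1([1,\infty))$ is a function to be constructed with $\Theta(1)$ large. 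Under the substitution $t=\log(e/x)$, a direct computation shows that the properties of $\Phi_0$ demanded in \eqref{pap1}--\eqref{pap2} are equivalent to the following for $\Theta$: $\Theta>0$ and non-decreasing; $\Theta''+\Theta'\ge 0$ on $(1,\infty)$ (this is convexity of $\Phi_0$); $\Theta(t)\to\infty$ with $\Theta$ sublinear as $t\to\infty$ (giving $\Phi_0(0^+)=\infty$ and $x^\theta\Phi_0(x)\to 0$); $\Theta'(t)\le\theta\,\Theta(t)$ for $t\ge 1$ (making $t\mapsto e^{-\theta t}\Theta(t)$ non-increasing, equivalently $x\mapsto x^\theta\Phi_0(x)$ non-decreasing); and $\int_1^\infty\Theta(t)\hat h(t)\,dt<\infty$, where $\hat h(t):=h(e^{1-t})e^{1-t}$ satisfies $\hat h\in L^1(1,\infty)$ with $\int_1^\infty\hat h=\omega(1)$ --- the extension on $[1,\infty)$ contributing only elementary positivity and monotonicity conditions on $\Theta(1),\Theta'(1)$, met once $\Theta(1)$ is large.

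Next I would build $\Theta$ concave, non-decreasing and sublinear; then $\Theta'$ is non-negative, non-increasing and tends to $0$, so $\Theta''+\Theta'\ge 0$ and $\Theta'\le\theta\Theta$ hold automatically for all large $t$, a translation $\Theta(\cdot)\rightsquigarrow\Theta(\cdot+a)$ absorbing the finitely many remaining $t$. Writing $\Theta=\Theta(1)+\int_1^{\,\cdot}g$ with $g:=\Theta'\ge 0$ continuous, non-increasing and vanishing at infinity, Fubini's theorem gives $\int_1^\infty\Theta\,\hat h=\Theta(1)\hat H(1)+\int_1^\infty g(s)\hat H(s)\,ds$, where $\hat H(s):=\int_s^\infty\hat h$ is continuous, non-increasing, positive and tends to $0$. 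Thus everything reduces to producing such a $g$ with $\int_1^\infty g=\infty$ (so that $\Theta\to\infty$) and $\int_1^\infty g\hat H<\infty$. For this I would choose a sequence $1=\tau_0<\tau_1<\cdots\to\infty$ increasing fast enough that $\hat H(\tau_k)\le 2^{-k}$ (and fast enough for the elementary parameter choice below), and let $g$ agree on each $[\tau_k,\tau_{k+1}]$ with a piece $c_k\,t^{-a_k}$, where $c_k>0$ and $a_k>0$ are chosen so that $g$ is continuous at $\tau_k$ and $\int_{\tau_k}^{\tau_{k+1}}g=1/(k+1)$. Then $g$ is non-increasing (so $\Theta$ is concave), $\int_1^\infty g=\sum_k1/(k+1)=\infty$, and $\int_1^\infty g\hat H\le\sum_k\hat H(\tau_k)\int_{\tau_k}^{\tau_{k+1}}g\le\sum_k 2^{-k}/(k+1)<\infty$; demanding in addition that the value of $g$ at least halve from each block to the next forces $g\to 0$, hence $\Theta$ sublinear.

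Transferring back, the function $\Phi_0$ just defined is, by construction, non-negative, non-increasing, convex and of class $C^1$ on $(0,\infty)$, it satisfies \eqref{pap1} by the estimate above, and it satisfies \eqref{pap2} since $\Theta\to\infty$, $e^{-\theta t}\Theta(t)\to 0$ and $e^{-\theta t}\Theta(t)$ is non-increasing. I expect the hard part to be the bookkeeping in the middle step, where the sequence $(\tau_k)$ must simultaneously make the tails $\int_{\tau_k}^\infty\hat h$ small (which forces $\tau_k$ to increase very fast when $h$ decays slowly near $0$), leave enough room to solve the two equations fixing $c_k,a_k$ on each block and to have the block values of $g$ halve, and keep $g$ non-increasing with $\int_1^\infty g=\infty$; one also has to observe that no piecewise-constant $g$ can work --- this being exactly the tension between the monotonicity of $x^\theta\Phi_0(x)$ and the convexity plus $C^1$-regularity of $\Phi_0$, which is what makes $\Phi_0$ vary genuinely within each block. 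The remaining verifications are routine and I would only sketch them.
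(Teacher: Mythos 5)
There is nothing to compare against inside the paper: Lemma~\ref{leap1} is quoted from \cite{GiLa} without proof, so your argument has to stand on its own. Most of it does. The reduction is correct: with $\Phi_0(x)=\Theta(\log(e/x))$ on $(0,1]$ and your extension on $[1,\infty)$ (which is $C^1$, convex, non-increasing and keeps $x\mapsto x^\theta\Phi_0(x)$ non-decreasing precisely because $\Theta'(1)\le\theta\Theta(1)$), the requirements \eqref{pap1}--\eqref{pap2} do translate into $\Theta\ge 0$ non-decreasing, $\Theta''+\Theta'\ge 0$, $\Theta'\le\theta\Theta$, $\Theta(t)\to\infty$ with $e^{-\theta t}\Theta(t)\to 0$, and $\int_1^\infty\Theta\hat h\,dt<\infty$; the Fubini identity and the block construction of $g=\Theta'$ with $\hat H(\tau_k)\le 2^{-k}$ and block masses $1/(k+1)$ correctly give $\int_1^\infty g=\infty$ and $\int_1^\infty g\hat H<\infty$; the degenerate case where $h$ vanishes near $0$ is handled correctly.

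The genuine gap is the assertion that, $\Theta$ being concave, non-decreasing and sublinear, ``$\Theta''+\Theta'\ge 0$ holds automatically for all large $t$''. It does not: concavity only gives $\Theta''\le 0$, and a non-increasing $\Theta'$ may drop arbitrarily steeply at arbitrarily large times, so $\Theta'\ge-\Theta''$ can fail on an unbounded set, in which case the translation trick is unavailable; since convexity of $\Phi_0$ is one of the conclusions of the lemma, this step must be argued, not declared automatic. Concretely, on the $k$-th block one has $\Theta''(t)+\Theta'(t)=c_kt^{-a_k}\bigl(1-a_k/t\bigr)$, so the condition is $a_k\le\tau_k$, which you neither impose nor list among the bookkeeping constraints. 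The good news is that your own scheme repairs it: the halving you require gives $v_k:=g(\tau_k)\le v_0 2^{-k}$, hence for all sufficiently large $k$ one has $\int_{\tau_k}^{\tau_{k+1}}v_k(t/\tau_k)^{-\tau_k}\,dt\le v_k\tau_k/(\tau_k-1)\le 1/(k+1)$, and since $a\mapsto\int_{\tau_k}^{\tau_{k+1}}v_k(t/\tau_k)^{-a}\,dt$ is decreasing, the exponent $a_k$ solving the mass equation automatically satisfies $a_k\le\tau_k$ there; the finitely many initial blocks, together with $\Theta'\le\theta\Theta$ near $t=1$, are then absorbed by the translation, which is harmless for \eqref{pap1} because $\Theta(t+a)\le\Theta(t)+a\sup g$. (The halving itself is available because enlarging $\tau_{k+1}$ pushes $a_k$ up towards $1+v_k\tau_k(k+1)>1$, so $(\tau_{k+1}/\tau_k)^{-a_k}\le 1/2$ for $\tau_{k+1}$ large.) With this constraint made explicit and verified, your construction is complete; as written, the convexity of $\Phi_0$ rests on a false ``automatic'' claim.
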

 %%%%%%%%%%%%%%%%
 
%%%%%%%%%%%%%%%%
%%%%%%%%%%%%%%%%
\bibliographystyle{siam}
\bibliography{CIBE}
%%%%%%%%%%%%%%%%
%%%%%%%%%%%%%%%%

\end{document}